 \newcommand{\m}{\boldsymbol{m}}
 \newcommand{\QQ}{\boldsymbol{Q}}
 \newcommand{\n}{\boldsymbol{n}}
  \newcommand{\f}{\boldsymbol{f}}
\newcommand{\cc}{\mathrm{c}}
\newcommand{\XX}{\boldsymbol{X}}
\newcommand{\Z}{\mathbb{Z}}
\renewcommand{\c}{\mathfrak{c}}
\renewcommand{\S}{\mathbb{S}}
\renewcommand{\P}{\mathcal{P}}
\newcommand{\R}{\mathbb{R}}
\newcommand{\C}{\mathbb{C}}
\newcommand{\N}{\mathbb{N}}
\newcommand{\boR}{\mathcal{R}}
\newcommand{\boS}{\mathcal{S}}
\newcommand{\boT}{\mathcal{T}}
\newcommand{\boB}{\mathcal{B}}
\newcommand{\boD}{\mathcal{D}}
\newcommand{\ptl}{{\partial}}
 \providecommand{\abs}[1]{|#1 |}
\newcommand{\wh}{\widehat }
\newcommand{\vp}{\varphi }
 \newcommand{\BMO}{\textup{BMO}}
 \newcommand{\loc}{\mathrm{loc}}
 \newcommand{\uloc}{\mathrm{uloc}}
 \providecommand{\norm}[1]{\|#1 \|}
\renewcommand{\div}{\operatorname{div}}
\renewcommand{\Re}{\operatorname{Re}}
\renewcommand{\Im}{\operatorname{Im}}
\newcommand{\Erf}{\operatorname{Erf}}
\newcommand{\bqq}{\begin{equation*}}
\newcommand{\eqq}{\end{equation*}}
\newcommand{\bq}{\begin{equation}}
\newcommand{\eq}{\end{equation}}
\newcommand{\A}{\boldsymbol{A}}
\renewcommand{\H}{\boldsymbol{H}}
\newcommand{\ve}{\varepsilon }
\newtheorem{thm}{Theorem}[section]
\newtheorem{definition}[thm]{Definition}
\newtheorem{prop}[thm]{Proposition}
\newtheorem{lemma}[thm]{Lemma}
\newtheorem{corollary}[thm]{Corollary}
\newtheorem{cor}[thm]{Corollary}
  \renewcommand{\S}{\mathbb{S}}
\theoremstyle{definition}
\newtheorem{remark}[thm]{Remark}
\newtheorem*{thank}{Acknowledgments}
\newcommand{\grad}{\nabla}
\begin{document}

\title{The Cauchy problem for the Landau--Lifshitz--Gilbert equation in
BMO and self-similar solutions}

\author{
\renewcommand{\thefootnote}{\arabic{footnote}}
Susana Guti\'errez\footnotemark[1] ~and Andr\'e de Laire\footnotemark[2]}
\footnotetext[1]{School of Mathematics,
University of Birmingham, Edgbaston, Birmingham, B15 2TT, United
Kingdom. E-mail: {\tt s.gutierrez@bham.ac.uk}}
\footnotetext[2]{
	Univ.\ Lille, CNRS, Inria, UMR 8524, Laboratoire Paul Painlev\'e, F-59000 Lille, France.\\
	E-mail: {\tt andre.de-laire@univ-lille.fr}}

\date{}
\maketitle

\begin{abstract}
We prove a global well-posedness result for the Landau--Lifshitz equation with Gilbert damping provided that
the BMO semi-norm of the initial data is small. As a consequence, we deduce the existence of self-similar solutions in any dimension.
In the one-dimensional case, we characterize the self-similar solutions associated with an initial data given by some
($\S^2$-valued) step function and establish  their stability.
We also show the existence of multiple solutions if the damping is strong enough.

Our arguments rely on the study of a dissipative quasilinear Schr\"odinger equation obtained via the stereographic projection
and techniques introduced by Koch and Tataru.

\medskip
\noindent{{\em Keywords and phrases:} Landau--Lifshitz--Gilbert
equation, global well-posedness, discontinuous initial data,  stability, self-similar
solutions, dissipative Schr\"odinger equation, complex Ginzburg--Landau equation,
ferromagnetic spin chain, heat-flow for harmonic maps.
}

\medskip
 \noindent{2010 \em{Mathematics Subject Classification}:} 
35R05, 
35Q60, 
35A01, 
35C06, 
35B35, 
35Q55, 
35Q56, 
35A02, 
53C44. 
%

\end{abstract}
\tableofcontents

\section{Introduction and main results}

\setcounter{equation}{0}
\numberwithin{equation}{section}

\setcounter{footnote}{2}
\renewcommand{\thefootnote}{\arabic{footnote}}

We consider the
Landau--Lifshitz--Gilbert (LLG) equation
\begin{equation}\label{LLG}
\ptl_t \m=\beta \m\times \Delta \m -\alpha \m \times (\m\times \Delta \m),
\quad\text{ on }\R^N\times\R^+,
 \tag{LLG$_\alpha$}
\end{equation}
where  $\m=(m_1, m_2, m_3):\R^N \times \R^+\longrightarrow \S^2$
is the spin vector, $\beta\geq 0$, $\alpha\geq 0$$, \times$ denotes
the usual cross-product in $\R^3$, and $\mathbb{S}^2$ is the unit
sphere in $\mathbb{R}^3$. This model introduced by Landau and Lifshitz describes the dynamics for the spin in ferromagnetic materials
\cite{landaulifshitz,gilbert}
and constitutes a fundamental equation in the magnetic recording industry \cite{wei2012}. The parameters $\beta\geq 0$ and $\alpha\geq 0$ are respectively the so-called exchange constant and Gilbert damping, and take into account the exchange of energy in the system and the effect of damping on the spin chain.
Note that, by performing a time-scaling, we assume w.l.o.g.\ that
$$
 \alpha\in[0,1]\quad \text{and}\quad  \beta=\sqrt{1-\alpha^2}.
$$ 
The Landau--Lifshitz family of equations includes as special cases the well-known heat-flow
for harmonic maps and the Schr\"odinger map equation onto the 2-sphere. 
In the limit case $\beta = 0$ (and so $\alpha=1$) the LLG equation reduces to the heat-flow equation for
harmonic maps
\bq\label{HFHM}\tag{HFHM}
\ptl_t \m-\Delta\m=\abs{\grad{\m}}^2\m ,\quad\text{ on }\R^N\times\R^+.
\eq
The case when $\alpha=0$ (i.e.\ no dissipation/damping) corresponds to the Schr\"odinger map equation
\bq\label{SM}\tag{SM}
\ptl_t \m= \m\times \Delta \m,\quad\text{ on }\R^N\times\R^+.
\eq
In the one-dimensional case $N=1$, we established in \cite{gutierrez-delaire} the existence and
asymptotics of the family $\{\m_{c,\alpha}\}_{c>0}$ of self-similar solutions of \eqref{LLG} for any fixed $\alpha\in[0,1]$,
extending the results in Guti\'errez, Rivas and Vega \cite{vega-gutierrez} in the setting of the Schr\"odinger map equation and related
binormal flow equation.
The  motivation for the results presented in this paper first originated from the desire to study further properties of the
self-similar solutions found in \cite{gutierrez-delaire}, and in particular their stability.
In the case $\alpha=0$, the stability of the self-similar solutions of the Schr\"odinger map has been considered in the series of papers by Banica and Vega \cite{banica-vega,Banica-Vega-2,banica-vega-Arch},
but no stability result is known  for these solutions in the presence of damping, i.e. $\alpha>0$. One of the key ingredients in the analysis given by Banica and Vega is the reversibility in time of the equation in the absence of damping. However, since \eqref{LLG} is a dissipative equation for $\alpha>0$, this property is no longer available and a new approach is needed.

In the one-dimensional case and for fixed $\alpha\in [0,1]$, the self-similar solutions of \eqref{LLG} constitute a uniparametric family $\{ \m_{c,\alpha} \}_{c>0}$ where $\m_{c,\alpha} $ is defined by
$$\m_{c,\alpha}(x,t)=\f\left( \frac{x}{\sqrt{t}}\right),$$
for some profile $\f:\mathbb{R}\longrightarrow  \S^2$, and is associated with an initial condition given by a step function (at least when $c$ is small) of the form
 \bq\label{intro-def-m0-c}
 \m^0_{c,\alpha}:=\A^+_{c,\alpha}\chi_{\R^+}+\A^-_{c,\alpha}\chi_{\R^-},
\eq
where $\A^{\pm}_{c,\alpha}$ are certain unitary vectors
and $\chi_E$  denotes the characteristic function of a set $E$. In particular, when $\alpha>0$, the Dirichlet energy associated with the solutions $\m_{c,\alpha}$ given by
\bq\label{diverge}
\norm{\grad\m_{c,\alpha}(\cdot,t)}^2_{L^2}=c^2\Big( \frac{2\pi}{\alpha t} \Big)^{1/2},\qquad  t>0,
\eq
diverges as $t\to 0^+$\footnote{We refer the reader to Theorem~\ref{thm-self} in the Appendix and to
\cite{gutierrez-delaire} for precise statements of these results.}. 

A first natural question in the study of the stability properties of the family of solutions $\{ \m_{c,\alpha}\}_{c>0}$ is whether or not it is possible to develop a well-posedness theory for the Cauchy problem for \eqref{LLG} in a functional framework that allows us to handle initial conditions of the type \eqref{intro-def-m0-c}. In view of \eqref{intro-def-m0-c} and \eqref{diverge}, such a framework should allow some ``rough'' functions 
(i.e.\ function spaces beyond the ``classical'' energy ones) and step functions.

A few remarks about previously known results in this setting are in order. 
In the case $\alpha>0$,  global well-posedness results for \eqref{LLG} have been established in $N\geq 2$ by
Melcher~\cite{melcher} and by Lin, Lai and Wang~\cite{lin-lai-wang}
for initial conditions with a smallness condition on the gradient in the $L^N(\R^N)$ and the Morrey $M^{2,2}(\R^N)$ norm\footnote{See footnote in Section~\ref{sec-singular} for the definition of the Morrey space $M^{2,2}(\R^N)$.}, respectively.
Therefore these results do not apply to the initial condition $\m^0_{c,\alpha}$.
 When $\alpha=1$, global well-posedness results for the heat flow for harmonic maps \eqref{HFHM} have been obtained by Koch and Lamm~\cite{koch-lamm}  for an initial condition $L^\infty$-close to a point and improved to an initial data with small
 BMO semi-norm  by Wang~\cite{wang}. The ideas used in \cite{koch-lamm} and \cite{wang} rely on techniques introduced by
 Koch and Tataru~\cite{koch-tataru} for the Navier--Stokes equation. Since  $\m^0_{c,\alpha}$
  has a small BMO semi-norm if $c$ is small,  the results in \cite{wang} apply to the case $\alpha=1$.

There are two main purposes in this paper. The first one is to adapt and extend the techniques developed
in \cite{koch-lamm,koch-tataru,wang} to prove a global well-posedness result for \eqref{LLG}
with $\alpha\in(0,1]$  for data $\m^0$ in $L^{\infty}(\R^N; \S^2)$ with small BMO semi-norm.
The second one is to apply this result to establish the stability  of the family of self-similar solutions $\{\m_{c,\alpha}\}_{c>0}$ found in \cite{gutierrez-delaire}
and derive further properties for these solutions. In particular, a further understanding of the properties of the  functions $\m_{c,\alpha}$
will allow us to prove the existence of multiple smooth solutions of \eqref{LLG} associated with the same initial
condition, provided that $\alpha$ is close to one.

In order to state the first of our results, we introduce the function space $X$ as follows:
\begin{equation*}
 X=\{\, v:\mathbb{R}^n\times \mathbb{R}^{+}\to \mathbb{R}^3  :
 v, \nabla v\in L^{1}_{\loc} (\mathbb{R}^N\times \mathbb{R}^{+}) \, \text{ and }\,
                   {\| v\|}_{X}:=\text{sup}_{t>0}{\| v(t)\|}_{L^{\infty}} +[v]_{X}< \infty\}
\end{equation*}
where
$$
 [v]_{X}:=\text{sup}_{t>0} \sqrt{t} {\|\nabla v\|}_{L^\infty} +
  \sup_{\substack{ x\in \R^N\\ r>0}} \left( \frac{1}{r^N} \int_{B_r(x)\times [0, r^2]} \abs{\nabla v(y,t)}^2\,  dt \,dy \right)^{\frac{1}{2}},
$$
and $B_r(x)$ denotes the ball with center $x$ and radius $r>0$ in $\mathbb{R}^{N}$. 
Let us remark that the first term in the definition of $[v]_{X}$ allows to capture a blow-up rate of $1/\sqrt{t}$ for  $ {\| \nabla v(t)\|}_{L^{\infty}}$, as $t\to 0^+$.  This is exactly the blow-up rate for the self-similar solutions (see~\eqref{def-self-sim} and \eqref{cond-2}). The integral term in $[v]_X$ is associated with the space BMO as explained in Subsection~\ref{subsec-global}, and it is also well adapted to the self-similar solutions (see Proposition~\ref{cotas-self} and its proof).

We can now state the following (global) well-posedness result for the Cauchy problem for the LLG equation:

\begin{thm}\label{thm-cauchy-intro}
Let  $\alpha\in(0,1]$. There exist  constants $M_1,M_2,M_3>0$,
depending only on $\alpha$ and $N$ such that the following holds.
For any
$\m^0\in L^\infty(\R^N;\S^2)$,
$\QQ\in \S^2$, $\delta\in (0,2]$
and $\ve_0>0$ such that $\ve_0\leq M_1\delta^6$,
\bq\label{CI-intro}
\inf_{\R^N}\abs{\m^0-\QQ}^2\geq 2\delta \quad\textup{ and }\quad[\m^0]_{BMO}\leq \ve_0,
\eq
there exists a unique solution $\m\in X(\R^N\times \R^+;\S^2)$ of \eqref{LLG}
with initial condition $\m^0$ such that
\bq
 \label{cond-thm-intro}
 \inf_{\substack{x\in \R^N\\ t>0}} \abs{\m(x,t)-\QQ}^2\geq \frac{4}{1+M_2^2(M_3\delta^4+\delta^{-1})^2}
 \quad \textup{ and }\quad [\m]_{X}\leq 4M_2(M_3\delta^4+8\delta^{-2}\ve_0).
 \eq
In addition, $\m $ is a smooth function belonging to  $C^{\infty}(\R^N\times \R^{+}; \mathbb{S}^2)$.
Furthermore, assume that $\n$ is a solution to \eqref{LLG}
fulfilling \eqref{cond-thm-intro},  with initial condition $\n^0$
 satisfying \eqref{CI-intro}. Then
\bq\label{cont-intro}
 \norm{\m-\n}_{X}\leq \frac{120M_2}{\delta^{2}}\norm{\m^0-\n^0}_{L^\infty}.
\eq
\end{thm}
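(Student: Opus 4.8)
The plan is to reduce the geometric flow \eqref{LLG} to a scalar dissipative quasilinear Schr\"odinger equation via the stereographic projection, and then solve the latter by a fixed-point argument in the Koch--Tataru space $X$. First I would place the projection pole at $\QQ$, so that the chordal identity $\abs{\m-\QQ}^2=4/(1+\abs{u}^2)$ holds, where $u$ denotes the complex-valued stereographic image of $\m$. The lower bound $\inf_{\R^N}\abs{\m^0-\QQ}^2\geq 2\delta$ then translates \emph{exactly} into the pointwise bound $\|u^0\|_{L^\infty}^2\leq(2-\delta)/\delta$, so $\|u^0\|_{L^\infty}\lesssim\delta^{-1/2}$, while the hypothesis $[\m^0]_{BMO}\leq\ve_0$ controls $[u^0]_{BMO}$ up to a Lipschitz constant depending on $\delta$ (the projection is smooth with bounded derivatives on the region $\{\abs{\m-\QQ}^2\geq 2\delta\}$). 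Under this change of unknown, \eqref{LLG} becomes
$$
\ptlt u=(\alpha+i\beta)\Big(\Delta u-\frac{2\bar u\,(\grad u)^2}{1+\abs{u}^2}\Big),
$$
a complex Ginzburg--Landau equation whose linear part has strictly positive real diffusion coefficient $\alpha>0$. This is precisely where the hypothesis $\alpha\in(0,1]$ enters: as $\alpha\to0$ one has $\beta\to1$ and the smoothing of the associated semigroup $e^{t(\alpha+i\beta)\Delta}$ degenerates to that of the (non-smoothing) Schr\"odinger group.

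Next I would recast the equation in Duhamel form,
$$
u(t)=e^{t(\alpha+i\beta)\Delta}u^0+\int_0^t e^{(t-s)(\alpha+i\beta)\Delta}\,F(u,\grad u)(s)\,ds,\qquad F(u,\grad u)=-(\alpha+i\beta)\frac{2\bar u\,(\grad u)^2}{1+\abs{u}^2},
$$
and solve it by contraction in a ball of $X$. The argument rests on two estimates. The first is the linear bound $\|e^{t(\alpha+i\beta)\Delta}u^0\|_{X}\lesssim\|u^0\|_{L^\infty}+[u^0]_{BMO}$: since $\Re(\alpha+i\beta)=\alpha>0$, the semigroup kernel is a complex Gaussian enjoying Gaussian pointwise bounds with constants depending on $\alpha$, so the classical Koch--Tataru linear estimates carry over, with constants that blow up as $\alpha\to0$. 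The second is the bilinear estimate controlling $\|\int_0^t e^{(t-s)(\alpha+i\beta)\Delta}F\,ds\|_X$ by a product of $X$-norms; here the quadratic-in-gradient structure of $F$ is matched to the two ingredients of $[\cdot]_X$, namely the weighted sup $\sqrt t\,\|\grad u\|_{L^\infty}$ and the Carleson measure $r^{-N}\int_{B_r(x)\times[0,r^2]}\abs{\grad u}^2$, exactly as in \cite{koch-tataru,koch-lamm,wang}. The denominator $1+\abs{u}^2$ is handled by freezing the $L^\infty$ bound: on the relevant ball $\|u\|_{L^\infty}\lesssim\delta^{-1/2}$, so $1+\abs{u}^2$ and its reciprocal are bounded and the difference of two such factors is Lipschitz in $u$. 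Smallness of $\ve_0$ relative to powers of $\delta$ makes the map a contraction and produces a unique fixed point $u\in X$; parabolic bootstrapping, again using $\alpha>0$, upgrades $u$, hence $\m$, to a smooth solution.

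With the fixed point in hand, the asserted bounds follow by unwinding the projection. The sup part of the $X$-bound on $u$ reads $\sup\abs{u}\leq M_2(M_3\delta^4+\delta^{-1})$, and inserting this into $\abs{\m-\QQ}^2=4/(1+\abs{u}^2)$ yields the stated lower bound in \eqref{cond-thm-intro}; the seminorm part records $[\m]_X\leq 4M_2(M_3\delta^4+8\delta^{-2}\ve_0)$. For the continuous-dependence estimate \eqref{cont-intro} I would write the equation for the difference $u-v$ of the stereographic images of $\m$ and $\n$: this difference solves the same linear problem with source the difference of the two nonlinearities, and the bilinear estimate, now used as a Lipschitz bound with the $L^\infty$ initial difference driving the inhomogeneity, gives $\|u-v\|_X\lesssim_\delta\|u^0-v^0\|_{L^\infty}$, which transfers back to $\|\m-\n\|_X\leq 120M_2\delta^{-2}\|\m^0-\n^0\|_{L^\infty}$.

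The hard part will be the bilinear estimate together with the bookkeeping of the explicit $\delta$-dependence. Tracking how the size $\|u\|_{L^\infty}\sim\delta^{-1/2}$ propagates through the denominator $1+\abs{u}^2$ and its expansion is what generates the precise powers $\delta^6,\delta^4,\delta^{-1},\delta^{-2}$ appearing in the smallness threshold $\ve_0\leq M_1\delta^6$ and in the output bounds of \eqref{cond-thm-intro}; arranging these exponents so that the contraction closes uniformly for all $\delta\in(0,2]$, while keeping the $\alpha$-dependence of the linear and smoothing constants under control, is the technical crux.
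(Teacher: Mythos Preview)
Your proposal is correct and follows essentially the same route as the paper: stereographic projection (the paper rotates $\QQ$ to the South Pole rather than projecting from $\QQ$ directly, which is equivalent), reduction to the dissipative quasilinear Schr\"odinger equation, fixed point in the Koch--Tataru space $X$ via the linear and bilinear estimates, and transfer back through the projection with explicit $\delta$-bookkeeping. One small simplification you may have missed: since $|\bar u/(1+|u|^2)|\leq 1/2$ uniformly, the nonlinearity satisfies $|F(u,\grad u)|\leq|\grad u|^2$ regardless of $\|u\|_{L^\infty}$, so the denominator requires no separate control in the bilinear estimate---the $L^\infty$ bound on $u$ is only needed when inverting the projection and in the Lipschitz estimate for the difference.
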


As we will see in Section~\ref{sec-cauchy}, the proof of Theorem~\ref{thm-cauchy-intro} relies on the use of the stereographic projection
to reduce Theorem~\ref{thm-cauchy-intro} to establish a well-posedness result for the associated dissipative (quasilinear) Schr\"odinger equation (see Theorem~\ref{thm:cauchy}).
In order to be able to apply Theorem~\ref{thm-cauchy-intro} to the study of both the initial value problem related to the LLG equation with a jump initial
condition, and the stability of the self-similar solutions found in \cite{gutierrez-delaire}, we will need a more quantitative version of this result.
A more refined version of Theorem~\ref{thm-cauchy-intro} will be stated in Theorem~\ref{thm:cauchy-LLG} in Subsection~\ref{sec-LLG}.

Theorem~\ref{thm-cauchy-intro} (or more precisely Theorem~\ref{thm:cauchy-LLG}) has two important consequences for the Cauchy problem related to \eqref{LLG} in one dimension:
\begin{eqnarray}
\label{ivp}
\left\{
 \begin{array}{ll}
  \ptl_t \m =\beta \m\times \partial_{xx}\m-\alpha \m \times (\m\times \partial_{xx}\m),\quad & \text{on}\quad \mathbb{R}\times \mathbb{R}^{+},
     \\[2ex]
  \m^0_{\A^{\pm}}:=\A^+\chi_{\R^+}+\A^-\chi_{\R^-},
 \end{array}
\right.
\end{eqnarray}
where $\A^{\pm}$ are two given unitary vectors such that the angle between $\A^{+}$ and $\A^{-}$ is sufficiently small:
\begin{itemize}
\item[\it{(a)}] From the uniqueness statement in Theorem~\ref{thm-cauchy-intro}, we can deduce that the solution to \eqref{ivp} provided by Theorem~\ref{thm-cauchy-intro} is a rotation of a self-similar solution $\m_{c,\alpha}$ for an appropriate value of $c$ (see Theorem~\ref{thm-small-angle} for a precise statement).

\item[\it{(b)}] (\emph{Stability}) From the dependence of the solution with respect to the initial data established in \eqref{cont-intro} and the analysis of the $1d$-self-similar solutions $\m_{c,\alpha}$ carried out in \cite{gutierrez-delaire}, we obtain the following stability result: For any given $\m^0\in \S^2$ satisfying \eqref{CI-intro} and close enough to  $\m^0_{\A^{\pm}}$, 
the solution $\m$ of \eqref{LLG} associated with  $\m^0$ given by Theorem~\ref{thm-cauchy-intro} must remain close to a rotation of a self-similar solution $\m_{c,\alpha}$, for some $c>0$. In particular, $\m$ remains close to a self-similar solution.
\end{itemize}
The precise statement is provided in the following theorem.

\begin{thm}\label{thm:stability}
Let $\alpha\in (0,1]$. There exist constants $L_1,L_2,L_3>0$, $\delta^*\in (-1,0)$,
$\vartheta^*>0$ such that the following holds.
Let $\A^+$,  $\A^- \in \S^2$ with angle $\vartheta$ between them. If
$$0<\vartheta\leq \vartheta^*,$$
then there is $c>0$ such that for every
$\m^0$ satisfying
\bqq
\norm{\m^0-\m^0_{\A^\pm} }_{L^\infty}\leq \frac{c\sqrt{\pi}}{2\sqrt{\alpha}},
\eqq
there exists $\boR\in SO(3)$, depending only on $\A^+$, $\A^-$, $\alpha$ and $c$, such that
there is a unique global smooth solution $\m$ of \eqref{LLG} with initial condition $\m^0$
that satisfies
\bq\label{condition}
  \inf_{\substack{x\in \R\\ t>0}} (\boR \m)_3(x,t)\geq \delta^*
 \quad \textup{ and }\quad [\m]_{X}\leq L_1+L_2 c.
 \eq
Moreover,
 \bqq
\norm{ \m-\boR\m_{c,\alpha}}_{X}\leq  L_3\norm{\m^0-\m^0_{\A^\pm} }_{L^\infty}.
\eqq
In particular,
\bqq
\norm{\partial_x \m-\partial_x \boR\m_{c,\alpha}}_{L^\infty}\leq  \frac{L_3}{\sqrt{ t}}\norm{\m^0-\m^0_{\A^\pm} }_{L^\infty},
\eqq
for all $t>0$.
\end{thm}
Notice that Theorem \ref{thm:stability} provides the existence of a unique solution in the set defined by the conditions \eqref{condition}, and hence it does not exclude the possibility of the existence of other solutions not satisfying these conditions. In fact, as we will see in Theorem~\ref{thm-non-uniq} below, one can prove the existence of multiple solutions of the initial value problem \eqref{ivp}, at least in the case when $\alpha$ is close to $1$.

We point out that our results are valid only for $\alpha>0$. If we let $\alpha\to0$, then
the constants $M_1$ and $M_3$ in Theorem~\ref{thm-cauchy-intro} go to 0 and $M_2$ blows up.
Indeed, we use that the kernel associated with the Ginzburg--Landau semigroup $e^{(\alpha+i\beta)t\Delta}$ belongs
to $L^1$ and its exponential decay. Therefore our techniques cannot be generalized (in a simple way) to cover the critical
case $\alpha=0$. In particular, we cannot recover the stability results for the
self-similar solutions in the case of Schr\"odinger maps proved by Banica and Vega in
\cite{banica-vega,Banica-Vega-2,banica-vega-Arch}.

As mentioned before, in \cite{lin-lai-wang} and \cite{melcher} some global well-posedness results for \eqref{LLG} with $\alpha\in (0,1]$ were proved for initial conditions with small gradient in $L^N(\R^N)$ and $M^{2,2}(\R^N)$, respectively (see footnote in Subsection~\ref{sec-singular} for the definition of the space $M^{2,2}(\R^N)$). 
In view of the embeddings 
$$L^N(\R^N)\subset M^{2,2}(\R^N)\subset BMO^{-1}(\R^N),$$
for $N\geq2$, Theorem~\ref{thm-cauchy-intro} can be seen as generalization of these results
since it covers the case of less regular initial conditions.
The arguments in \cite{lin-lai-wang,melcher} are based on the method of moving frames that produces a covariant
complex Ginzburg--Landau equation. In Subsection~\ref{sec-singular} we give more details and discuss the corresponding
equation in the one-dimensional case and provide some properties related to the self-similar solutions.

Our existence and uniqueness result given by Theorem~\ref{thm-cauchy-intro} requires the initial condition to be small in the 
BMO semi-norm. Without this condition, the solution 
could develop a singularity in finite time. In fact, in dimensions $N=3,4$, Ding and Wang \cite{ding-wang} have proved that
for some smooth  initial conditions with small (Dirichlet) energy, the associated solutions of \eqref{LLG}  blow up in 
finite time.

In the context of the initial value problem \eqref{ivp}, 
the smallness condition in the BMO semi-norm is equivalent to the smallness of the angle between $\A^{+}$ and $\A^{-}$. 
As discussed in \cite{gutierrez-delaire}, in the one dimensional case $N=1$ for fixed $\alpha\in(0,1]$ there is some numerical evidence that
indicates the existence of multiple (self-similar) solutions associated with the same initial condition of the type in \eqref{ivp} (see Figures~2 and 3 in \cite{gutierrez-delaire}). This suggests that the Cauchy problem for \eqref{LLG} with initial condition \eqref{ivp} is ill-posed for general $\A^{+}$ and $\A^{-}$ unitary vectors.

The following result states that in the case when $\alpha$ is close to $1$, one can actually prove
the existence of multiple smooth solutions associated with the same initial condition $\m^{0}_{\A^{\pm}}$. Moreover, given any angle $\vartheta\in(0,\pi)$ between two vectors $\A^{+}$ and $\A^{-}\in\mathbb{S}^2$, one can generate any number of distinct solutions by considering values of $\alpha$ sufficiently close to $1$.
\begin{thm}\label{thm-non-uniq}
Let $k\in \N$, $\A^+$, $\A^-\in \S^2$ and let $\vartheta$ be the angle between $\A^+$ and $\A^-$.
If $\vartheta\in (0,\pi)$, then there exists $\alpha_k \in (0,1)$ such that for every $\alpha\in [\alpha_k,1]$
there are at least $k$ distinct smooth self-similar solutions $\{\m_j\}_{j=1}^k$ in $X(\R\times \R^+;\S^2)$ of \eqref{LLG}
with initial condition $\m^0_{\A^\pm}$. These solutions are characterized by a  strictly increasing sequence
of values $\{c_j\}_{j=1}^k$, with $c_k\to\infty$ as $k\to\infty$, such that
\bq\label{mk-1}
\m_j=\boR_j \m_{c_j,\alpha},
\eq
where $\boR_j \in SO(3)$. In particular
\bq\label{derivada-mk}
\sqrt{t}\norm{\partial_x\m_j(\cdot, t)}_{L^\infty}=c_j, \quad  \text{for all } t>0.
\eq
Furthermore, if $\alpha=1$ and  $\vartheta\in [0,\pi]$, then
there is an infinite number of distinct smooth self-similar solutions  $\{\m_j\}_{j\geq 1}$ in $X(\R\times \R^+;\S^2)$ of \eqref{LLG}
with initial condition $\m^0_{\A^\pm}$. These solutions are also characterized by a sequence  
$\{c_j\}_{j=1}^\infty$  
such that \eqref{mk-1} and \eqref{derivada-mk} are satisfied. This sequence is explicitly  given by
\begin{equation}\label{formula-c-j}
 c_{2\ell+1}= \ell\sqrt{\pi}-\frac{\vartheta}{2\sqrt{\pi} },\quad 
 c_{2\ell}= \ell\sqrt{\pi}+\frac{\vartheta}{2\sqrt{\pi} },  \quad \text{ for }\ell\geq 0.
\end{equation}
\end{thm}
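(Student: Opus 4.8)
The plan is to reduce the whole statement to a careful counting of the self-similar profiles $\f=\f_{c,\alpha}$ constructed in \cite{gutierrez-delaire}, using the well-posedness/uniqueness theorem (Theorem~\ref{thm:cauchy-LLG}) to guarantee that each admissible value of $c$ produces a genuine solution of \eqref{ivp} and that distinct $c$'s give distinct solutions. First I would recall from \cite{gutierrez-delaire} the precise relation between the self-similar parameter $c>0$ and the geometry of the limiting boundary data: each profile $\f_{c,\alpha}$ has limits $\A^\pm_{c,\alpha}=\lim_{x\to\pm\infty}\f_{c,\alpha}(x)$, and the angle $\vartheta(c,\alpha)$ between $\A^+_{c,\alpha}$ and $\A^-_{c,\alpha}$ is an explicit continuous function of $c$ (for fixed $\alpha$) obtained from the asymptotics recalled in Theorem~\ref{thm-self}. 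The crux is the behaviour of this angle: as $\alpha\to 1$ the damping dominates, and the map $c\mapsto\vartheta(c,\alpha)$ oscillates, sweeping across the interval $(0,\pi)$ more and more times as $\alpha\uparrow 1$. Concretely, in the limiting case $\alpha=1$ the profile equation integrates explicitly and the angle is governed by a formula equivalent to \eqref{formula-c-j}, so that the prescribed angle $\vartheta$ is attained at the infinitely many values $c_{2\ell\pm1}$ listed there.

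With that preparation, the finite-$k$ statement follows by a continuity/perturbation argument. For $\alpha=1$ fix $k$ values $c_1<\dots<c_k$ from \eqref{formula-c-j} that all realize the same angle $\vartheta\in(0,\pi)$; since $\vartheta\mapsto c$ via those formulas is genuinely multivalued, such $k$ values exist once we go far enough out in $\ell$. Then I would invoke continuity of the map $(c,\alpha)\mapsto\vartheta(c,\alpha)$ (and of the associated rotations $\boR_j$) in $\alpha$ near $1$: by the implicit function theorem, or simply by a transversality/sign-change argument on $\vartheta(\cdot,\alpha)-\vartheta$, each of the $k$ roots at $\alpha=1$ persists as a root $c_j(\alpha)$ for all $\alpha$ in some interval $[\alpha_k,1]$, with the $c_j(\alpha)$ remaining distinct and strictly increasing. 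For each such $c_j(\alpha)$ the profile $\f_{c_j,\alpha}$ has boundary angle exactly $\vartheta$, so after composing with a suitable rotation $\boR_j\in SO(3)$ sending $\A^\pm_{c_j,\alpha}$ to the prescribed $\A^\pm$, the function $\m_j=\boR_j\m_{c_j,\alpha}$ is a self-similar solution of \eqref{LLG} with initial datum $\m^0_{\A^\pm}$. Distinctness is immediate from \eqref{derivada-mk}: $\sqrt{t}\,\norm{\partial_x\m_j(\cdot,t)}_{L^\infty}=c_j$ separates the solutions, and membership in $X$ and smoothness come from the regularity of the profiles established in \cite{gutierrez-delaire}.

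For the $\alpha=1$ part I would argue directly rather than by perturbation: here \eqref{HFHM} is the governing equation, the profile ODE linearizes (the stereographic/Hasimoto-type transform yields an explicit Gaussian-type solution), and one reads off that the admissible $c$ realizing angle $\vartheta$ form exactly the arithmetic-progression pattern \eqref{formula-c-j}, giving infinitely many solutions for every $\vartheta\in[0,\pi]$, including the endpoints. Again \eqref{derivada-mk} separates them. I would let $k\to\infty$ to note $c_k\to\infty$, which is visible from the formulas since $c_{2\ell\pm1}\sim\ell\sqrt\pi$.

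The main obstacle I anticipate is establishing the quantitative oscillation of $c\mapsto\vartheta(c,\alpha)$ and its uniform-in-$\alpha$ control as $\alpha\uparrow1$: one must show that the number of times this angle function crosses a given level $\vartheta\in(0,\pi)$ tends to infinity as $\alpha\to1$, with crossings that are transverse (nonvanishing derivative) so that the roots survive perturbation and stay distinct. This requires the detailed asymptotic expansion of $\A^\pm_{c,\alpha}$ from \cite{gutierrez-delaire} (the analogue of the $\alpha=0$ computation in \cite{vega-gutierrez}), together with uniform estimates ensuring the $\alpha=1$ picture governs a full neighborhood $[\alpha_k,1]$. Everything else—the construction of $\m_j$ from $\f_{c_j,\alpha}$, the rotation-invariance of \eqref{LLG}, membership in $X$, and the separation via \eqref{derivada-mk}—is comparatively routine once this oscillation mechanism is in hand.
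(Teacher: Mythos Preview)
Your approach is essentially the one in the paper: use the explicit $\alpha=1$ formula $\vartheta_{c,1}=\arccos(\cos(2c\sqrt\pi))$ together with a quantitative continuity estimate in $\alpha$ (the paper's Lemma~\ref{lemma-A} gives $|\cos\vartheta_{c,\alpha}-\cos\vartheta_{c,1}|\le 4h(c)\sqrt{1-\alpha}$) to locate $k$ distinct values $c_j$ with $\vartheta_{c_j,\alpha}=\vartheta$, then rotate. Two small corrections are worth making. First, Theorem~\ref{thm:cauchy-LLG} plays no role here and should not be invoked: its smallness hypotheses fail for large $c$, and the existence of the $\m_{c,\alpha}$ as solutions comes directly from \cite{gutierrez-delaire}, while distinctness comes from \eqref{derivada-mk}, not from any uniqueness statement. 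Second, the paper sidesteps the transversality issue you flag by working not at the roots $c_j$ themselves but at the extrema $a_j=(2j+1)\sqrt\pi/2$ and $b_j=(j+1)\sqrt\pi$ of $\cos\vartheta_{c,1}$; the continuity bound then forces $\cos\vartheta_{a_j,\alpha}<l<\cos\vartheta_{b_j,\alpha}$ for $\alpha\in[\alpha_k,1]$, and the intermediate value theorem gives a root in each interval $[a_j,b_j]$ using only continuity in $c$---no implicit function theorem or derivative information is needed.
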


It is important to remark that in particular Theorem~\ref{thm-non-uniq} asserts that when $\alpha=1$,
given $\A^+,\A^-\in\mathbb{S}^2$ such that $\A^+=\A^-$,  there exists an infinite number of distinct solutions $\{\m_j\}_{j\geq 1}$ in $X(\mathbb{R}\times\mathbb{R}^{+}; \mathbb{S}^2)$ of \eqref{LLG} with initial condition $\m^0_{\A^\pm}$ such that $[\m^0_{\A^{\pm}}]_{BMO}=0$. This particular case shows that a condition on the size of $X$-norm of the solution as that given in \eqref{cond-thm-intro} in Theorem~\ref{thm-cauchy-intro} 
is necessary for the uniqueness of solution. 
We recall that for finite energy solutions of \eqref{HFHM} there are several nonuniqueness results  based on Coron's technique \cite{coron}
in dimension $N=3$. Alouges and Soyeur~\cite{alouges-soyeur} successfully adapted this idea to prove the existence of multiple solutions of the (LLG$_\alpha$), with $\alpha>0$,  for maps $ {\boldsymbol{m}}:\Omega\longrightarrow \mathbb{S}^2$, with $\Omega$ a bounded regular domain of $\mathbb{R}^3$. In our case, since  $\{c_j\}_{j=1}^k$ is strictly increasing, we have at least $k$ genuinely different \emph{smooth}
solutions. Notice also that  the identity \eqref{derivada-mk} implies that the $X$-norm of the solution is large as $j\rightarrow\infty$.

\noindent \textbf{Structure of the paper.}
This paper is organized as follows: in Section~\ref{sec-cauchy} we use the ste\-reo\-graphic projection to
reduce matters to the study the initial value problem for the resulting dissipative  Schr\"odinger equation,
prove its global well-posedness in well-adapted  normed spaces,
and use this result to establish Theorem~\ref{thm:cauchy-LLG} (a more quantitative version of Theorem~\ref{thm-cauchy-intro}).
In Section~\ref{sec-self-similar} we focus on  the self-similar solutions and we prove Theorems~\ref{thm:stability}
and \ref{thm-non-uniq}. In Section~\ref{sec-singular} we discuss some implications of the existence of explicit
self-similar solutions for the Schr\"odinger equation obtained by means of the Hasimoto transformation. Finally, and for the convenience of the reader, we have
included some regularity results for the complex Ginzburg--Landau equation and some properties of the self-similar solutions $\m_{c,\alpha}$ in the Appendix.

\noindent{\bf Notations.} We write $\R^+=(0,\infty)$. Throughout this paper we will assume that $\alpha\in (0,1]$ and the constants can depend
on $\alpha$. In the proofs  $A\lesssim B$ stands for $A\leq C \, B$ for some constant $C>0$ depending only on $\alpha$ and $N$.
We denote in bold the vector-valued variables.

Since we are interested in $\S^2$-valued functions, with a slightly abuse of notation,
we denote by
$L^\infty(\R^N;\S^2)$ (resp.  $X(\R^N;\S^2)$)  the space of function in $L^\infty(\R^N;\R^3)$
(resp. $X(\R^N;\R^3)$) such that $\abs{\m}$=1 a.e.\ on $\R^N$.

%
\section{The Cauchy problem}
\label{sec-cauchy}
%

\subsection{The Cauchy problem for a dissipative quasilinear Schr\"odinger equation}
\label{subsec-global}
Our approach to study the Cauchy problem for  \eqref{LLG} consists in analyzing the Cauchy problem
for the associated dissipative quasilinear Schr\"odinger equation through the stereographic projection, and then
``transferring'' the results back to the original equation.
To this end, we introduce the stereographic projection from the South Pole
$
\P:\S^2\setminus\{(0,0,-1)\}\to\C$ defined for  by
$$\P(\m)=\frac{m_1+im_2}{1+m_3}.$$
Let $\m$ be a smooth solution of \eqref{LLG} with \mbox{$m_3>-1$}, then its stereographic projection $u=\P(\m)$
satisfies the quasilinear dissipative Schr\"odinger equation (see e.g.\ \cite{lak-nak} for details)
\bq\label{DNLS}\tag{DNLS}
iu_t +(\beta-i\alpha)\Delta u=2(\beta-i\alpha)\frac{\bar u (\grad u)^2}{1+\abs{u}^2}.
\eq
At least formally, the Duhamel formula gives the integral equation:
\bq\label{duhamel}\tag\tag{IDNLS}
u(x,t)=S_{\alpha}(t) u^0+\int_0^t S_\alpha(t-s)g(u)(s)\,ds,
\eq
where $u^0=u(\cdot,0)$   corresponds to the initial condition,
$$g(u)=-2i(\beta-i\alpha)\frac{\bar u (\grad u)^2}{1+\abs{u}^2}$$
and $S_\alpha(t)$ is the dissipative Schr\"odinger semigroup (also called the complex Ginzburg--Landau semigroup) given by
$S_\alpha(t)\phi=e^{(\alpha+i\beta)t\Delta}\phi$, i.e.
\bq\label{semigrupo}
(S_\alpha(t)\phi)(x)=\int_{\R^N}G_\alpha(x-y,t)\phi(y)\,dy, \quad \text{with } \quad G_\alpha(x,t)=\frac{ e^{-\frac{\abs{x}^2}{4(\alpha+i\beta)t} } }{ (4\pi(\alpha+i\beta) t)^{N/2}}.
\eq

One difficulty in studying  \eqref{duhamel} is to handle the term $g(u)$.
Taking into account that
\bq\label{ineq0}
\abs{\beta-i\alpha}=1 \ \text{ and } \
\frac{a}{1+a^2}\leq \frac12, \quad \text{for all }a\geq 0,
\eq
we see that
 \bq\label{est-g}|g(u)|\leq |\grad u|^2,
\eq
so we need to control $|\grad u|^2$. Koch and Taratu dealt with a similar problem when studying the well-posedness for the Navier--Stokes equation in \cite{koch-tataru}.
Their approach was to introduce some new spaces related to BMO  and $\BMO^{-1}$. Later, Koch and Lamm \cite{koch-lamm}
and  Wang \cite{wang} have adapted these spaces to study some geometric flows. Following these ideas, we define the Banach spaces
\begin{align*}
 X(\R^N\times \R^+;F)&=\{v:\R^N\times \R^+\to F \ : \ v,\grad v\in L^1_{\loc}(\R^N\times \R^+), \  \norm{v}_{X}<\infty\}
 \qquad \text{and} \\
 Y(\R^N\times \R^+;F)&=\{v:\R^N\times \R^+\to F \ : v\in L^1_{\loc}(\R^N\times \R^+), \  \norm{v}_{Y}<\infty\},
\end{align*}
where
\begin{align*}
\norm{v}_{X}:=&\sup_{t>0}\norm{v}_{L^\infty}+[v]_X,\qquad \text{with}\\
 [v]_{X}:=&\sup_{t>0} \sqrt{t}\norm{\grad v}_{L^\infty}+
\sup_{\substack{ x\in \R^N\\ r>0}}\left( \frac1{r^N}\int_{Q_r(x)} \abs{\grad v(y,t) }^2\,dt\,dy \right)^\frac12, \qquad\text{and}\\
\norm{v}_{Y}=&\sup_{t>0} t\norm{v}_{L^\infty}+
\sup_{\substack{ x\in \R^N\\ r>0}}\frac1{r^N}\int_{Q_r(x)} \abs{ v(y,t) }\,dt\,dy.
\end{align*}
Here $Q_r(x)$ denotes the parabolic ball $Q_r(x)=B_r(x)\times [0,r^2]$ and $F$ is either $\C$ or $\R^3$.
The absolute value stands for the complex absolute value if $F=\C$
and for the euclidean norm if $F=\R^3$. We denote with the same symbol the absolute value
in $F$ and $F^3$. Here and in the sequel we will omit the domain in the norms and semi-norms
when they are taken in the whole space, for example  $\norm{\cdot}_{L^p}$  stands for
$\norm{\cdot}_{L^p(\R^N)}$, for $p\in[1,\infty]$.

The spaces $X$ and $Y$ are related to the spaces $\BMO(\R^N)$ and $\BMO^{-1}(\R^N)$ and are well-adapted
to study problems involving the heat semigroup $S_1(t)=e^{t\Delta}$. In order to establish the properties of the semigroup $S_\alpha(t)$
with $\alpha\in (0,1]$, we introduce the spaces
$\BMO_\alpha(\R^N)$ and $\BMO_\alpha^{-1}(\R^N)$ as the space of distributions $f\in S'(\R^N;F)$
such that the semi-norm and norm  given respectively  by
\begin{align*}
[f]_{BMO_\alpha}:=&\sup_{\substack{ x\in \R^N\\ r>0}}\left( \frac1{r^N}\int_{Q_r(x)} \abs{\grad S_\alpha(t)f}^2\,dt\,dy \right)^\frac12,\qquad {\hbox {and}} \\
\norm{f}_{BMO^{-1}_\alpha}:=&\sup_{\substack{ x\in \R^N\\ r>0}}\left( \frac1{r^N}\int_{Q_r(x)}\abs{ S_\alpha(t)f}^2\,dt\,dy \right)^\frac12,
\end{align*}
are finite.

On the one hand, the Carleson measure characterization of BMO functions (see \cite[Chapter~4]{stein} and
\cite[Chapter~10]{lemarie})
yields that for fixed $\alpha\in(0,1]$, ${BMO_\alpha}(\R^N)$ coincides with the classical $\BMO(\R^N)$ space\footnote{
$$ BMO(\R^N)=\{f:\R^N\times [0,\infty)\to F \ : f\in L^1_{\loc}(\R^N),  \ [f]_{BMO}<\infty\},$$
with the semi-norm
\bqq
[f]_{BMO}=\sup_{\substack{x\in\R^N\\ r>0}}  \fint_{B_r(x)}\abs{f(y)-f_{x,r}}\,dy,
\eqq
where $f_{x,r}$ is the average $$f_{x,r}=\fint_{B_r(x)} f(y)\,dy=\frac{1}{\abs{B_r(x)}} \int_{B_r(x)} f(y)\,dy.$$}, that is for all $\alpha\in(0,1]$ there exists a constant  $\Lambda>0$ depending only on $\alpha$ and $N$ such that
\bq\label{equiv-BMO}
\Lambda[f]_{BMO}\leq [f]_{BMO_\alpha}\leq \Lambda^{-1}[f]_{BMO}.
\eq
On the other hand, Koch and Tataru proved in \cite{koch-tataru} that $\BMO^{-1}$ (or equivalently $\BMO^{-1}_{1}$, using our notation) can be characterized as the space of derivatives of functions in BMO. A straightforward  generalization of their argument shows that the same result holds for $\BMO_\alpha^{-1}$ (see Theorem~\ref{tataru}). Hence, using the Carleson measure characterization theorem, we conclude that $\BMO_\alpha^{-1}$
coincides with the space  $\BMO^{-1}$ and that there exists a constant  $\tilde\Lambda>0$, depending only on $\alpha$ and $N$, such that
 \bq\label{equiv-norm}
 \tilde\Lambda\norm{f}_{BMO^{-1}}\leq  \norm{f}_{BMO_\alpha^{-1}} \leq \tilde\Lambda^{-1}\norm{f}_{BMO^{-1}}.
 \eq
The above remarks allows us to use several of the estimates proved in \cite{koch-lamm,koch-tataru,wang} in the case $\alpha=1$
to study the integral equation \eqref{duhamel} by using a fixed-point approach.

Our first result concerns the global well-posedness of the Cauchy problem for \eqref{duhamel} with small
initial data in $\BMO(\R^N)$.
%
\begin{thm}\label{thm:cauchy}
 Let $\alpha\in(0,1]$. There exist constants $C,K\geq 1$ such that for every $L\geq 0$, $\ve>0$, and $\rho>0$ satisfying
\bq\label{hyp1}
8C(\rho+\ve)^2\leq \rho,
\eq
if $u^0\in L^\infty(\R^N;\C)$,  with
\bq\label{cond-CI}
\norm{u^0}_{L^\infty}\leq L \quad\textup{ and }\quad [u^0]_{BMO}\leq \ve,
\eq
then there exists a unique  solution $u\in X(\R^N\times \R^+;\C)$ to \eqref{duhamel} such that
\bq\label{cond-small}
[u]_{X}\leq K(\rho+\ve).
\eq
Moreover,
\begin{enumerate}[label=({\roman*}),ref=({\roman*})]
\item $\sup_{t>0}\norm{u}_{L^\infty}\leq K(\rho+L)$.
 \item $u\in C^\infty(\R^N\times \R^+)$ and \eqref{DNLS} holds pointwise.
  \item\label{converge-CII} $\lim\limits_{t\to0^+} u(\cdot,t)=u^0$ as tempered distributions. Moreover,
  for every $\vp\in S(\R^N)$, we have
  \bq\label{conver-en-0}
\norm{ (u(\cdot,t)-u^0)\vp}_{L^1}\to 0, \quad \textup{ as }t\to 0^+.
\eq
 \item (Dependence on the initial data) Assume that $u$ and $v$ are respectively
 solutions to \eqref{duhamel} fulfilling \eqref{cond-small} with initial conditions $u^0$ and $v^0$
 satisfying \eqref{cond-CI}. Then
\bq\label{cont-CI}
 \norm{u-v}_{X}\leq 6K \norm{u^0-v^0}_{L^\infty}.
\eq
\end{enumerate}
\end{thm}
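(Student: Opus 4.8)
The plan is to solve the integral equation \eqref{duhamel} by a contraction mapping argument in $X$, exploiting the identifications $\BMO_\alpha=\BMO$ and $\BMO_\alpha^{-1}=\BMO^{-1}$ recorded in \eqref{equiv-BMO}--\eqref{equiv-norm}. For fixed $u^0$ I would consider the map
\[
\boT_{u^0}(u)(t)=S_\alpha(t)u^0+\int_0^t S_\alpha(t-s)g(u)(s)\,ds,
\]
and look for a fixed point in the closed set $\mathcal{B}=\{u\in X:[u]_X\leq K(\rho+\ve)\}$, equipped with the $X$-metric. Two structural estimates drive everything: a linear bound for the free evolution and an inhomogeneous (bilinear) bound for the Duhamel term, after which the smallness hypothesis \eqref{hyp1} closes the argument.

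First I would record the linear estimates. The Carleson-measure characterization \eqref{equiv-BMO} gives $[S_\alpha(t)u^0]_X\lesssim[u^0]_{BMO}\leq\ve$, while the pointwise identity $|G_\alpha(x,t)|=(4\pi t)^{-N/2}e^{-\alpha|x|^2/(4t)}$ yields $\norm{G_\alpha(\cdot,t)}_{L^1}=\alpha^{-N/2}$ and hence $\sup_{t>0}\norm{S_\alpha(t)u^0}_{L^\infty}\lesssim\norm{u^0}_{L^\infty}\leq L$; it is precisely this $\alpha^{-N/2}$ that forces the constants to degenerate as $\alpha\to0$. The technical core is the inhomogeneous estimate
\[
\Big\|\int_0^t S_\alpha(t-s)f(s)\,ds\Big\|_X\lesssim\norm{f}_Y,
\]
which I would prove by splitting the $X$-norm into its $\sqrt t\,\norm{\grad\cdot}_{L^\infty}$ part and its Carleson part and inserting the Gaussian bounds for $G_\alpha$; since these coincide, up to $\alpha$-dependent constants, with the heat-kernel bounds, the estimates of Koch--Tataru, Koch--Lamm and Wang transfer verbatim. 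Combined with $|g(u)|\leq|\grad u|^2$ from \eqref{est-g}, which gives $\norm{g(u)}_Y\leq[u]_X^2$ immediately from the two seminorms, this controls the Duhamel term by $C[u]_X^2$, and \eqref{hyp1} makes $\boT_{u^0}$ map $\mathcal{B}$ into itself.

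Next, writing $g(u)=-2i(\beta-i\alpha)\frac{\bar u}{1+|u|^2}(\grad u)^2$, I would expand $g(u)-g(v)$ into a term in which the quadratic factor is differenced, of the form $\frac{\bar u}{1+|u|^2}(\grad u+\grad v)\grad(u-v)$, and a term in which the coefficient is differenced, using that $w\mapsto\frac{\bar w}{1+|w|^2}$ is bounded by $\tfrac12$ and globally Lipschitz. A Cauchy--Schwarz bound on the Carleson part of the $Y$-norm (and an $L^\infty\cdot L^\infty$ bound on the other part) then gives $\norm{g(u)-g(v)}_Y\lesssim([u]_X+[v]_X)\norm{u-v}_X$ on $\mathcal{B}$, so that the inhomogeneous estimate yields $\norm{\boT_{u^0}(u)-\boT_{u^0}(v)}_X\lesssim(\rho+\ve)\norm{u-v}_X$, a contraction for $\rho+\ve$ small. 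The Banach fixed point theorem produces the unique $u$ with $[u]_X\leq K(\rho+\ve)$.

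Finally I would read off the remaining assertions: (i) from the $L^\infty$-component of the same bounds, giving $\sup_{t>0}\norm{u}_{L^\infty}\lesssim L+[u]_X^2\lesssim L+\rho$; (ii) from interior parabolic regularity for the complex Ginzburg--Landau equation (the results collected in the Appendix), bootstrapping the mild solution to a smooth one for which \eqref{DNLS} holds pointwise; (iii) from $S_\alpha(t)u^0\to u^0$ in $S'$ together with the vanishing of the Duhamel term as $t\to0^+$, which also gives \eqref{conver-en-0}; and (iv) by applying the difference estimate to two solutions and absorbing the contraction term, bounding the linear part through $[S_\alpha(t)(u^0-v^0)]_X\lesssim[u^0-v^0]_{BMO}\lesssim\norm{u^0-v^0}_{L^\infty}$ and $\sup_{t>0}\norm{S_\alpha(t)(u^0-v^0)}_{L^\infty}\lesssim\norm{u^0-v^0}_{L^\infty}$. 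I expect the main obstacle to be the inhomogeneous estimate, specifically verifying that the complex kernel $G_\alpha$ enjoys the Gaussian pointwise bounds needed to import the $\alpha=1$ arguments; the rest is bookkeeping organized around \eqref{hyp1}.
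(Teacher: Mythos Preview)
Your proposal is correct and follows essentially the same approach as the paper: the same linear estimates (via the $\BMO_\alpha$ and $\BMO_\alpha^{-1}$ characterizations), the same $Y\to X$ inhomogeneous bound for the Duhamel operator, the same nonlinear estimate $\norm{g(u)}_Y\leq[u]_X^2$, and the same difference estimate for $g(u)-g(v)$, with (ii)--(iii) deferred to the Appendix. The only cosmetic difference is that the paper runs the contraction on the ball $\boB_\rho(u^0)=\{u:\norm{u-S_\alpha(t)u^0}_X\leq\rho\}$ centered at the free evolution rather than on your set $\{[u]_X\leq K(\rho+\ve)\}$, and then argues uniqueness in the latter set separately; your choice makes the uniqueness statement slightly more direct but is otherwise equivalent.
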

Although  condition \eqref{hyp1} appears naturally from the fixed-point used in the proof, it may be
no so clear at first glance. To better understand it, let us define for $C>0$
\bq\label{set-S}
\mathcal S(C)=\{ (\rho,\ve)\in \R^+\times \R^+ : C(\rho+\ve)^2\leq \rho\}.
\eq
We see that if $(\rho,\ve)\in \mathcal S(C)$, then $\rho,\ve>0$ and
\begin{equation}\label{hyp1-bis}
\ve\leq \frac{\sqrt{\rho}}{\sqrt{C}}-\rho.
\end{equation}
Therefore the set $\mathcal S(C)$ is non-empty and bounded. The shape of this set is depicted in Figure~\ref{fig-set}.
\begin{figure}[ht!]
\begin{center}
  \includegraphics{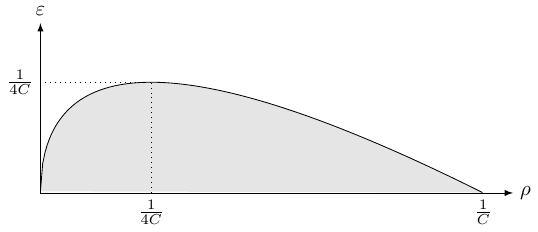}
  \end{center}
  \caption{The shape of the set $\mathcal S(C)$.}
  \label{fig-set}
\end{figure}
In particular, we infer from \eqref{hyp1-bis} that if $(\rho,\ve)\in \mathcal S(C)$, then
\bq\label{cota-rho}
\rho\leq \frac1C \ \text{ and } \  \ve \leq \frac1{4C}.
\eq
In addition, if $\tilde C\geq C$, then
\bq\label{encajonados}\mathcal S(\tilde C)\subseteq \mathcal S(C).\eq
Moreover, taking for instance $\rho=1/(32C)$, Theorem~\ref{thm:cauchy} asserts that for fixed $\alpha\in(0,1]$,
we can take for instance $\ve=1/(32C)$ (that depends  on $\alpha$ and $N$, but not on the $L^{\infty}$-norm of the initial data)
 such that for any given initial condition $u^0\in L^{\infty}(\R^N)$ with $[u^0]_{BMO}\leq \ve$,  there exists a global (smooth) solution $u\in X(\R^N\times \R^{+}; \C)$ of \eqref{DNLS}. Notice that $u^0$ is allowed to have a large $L^\infty$-norm as long as $[u^0]_{BMO}$ is sufficiently small;
this is a weaker requirement that asking for  the $L^\infty$-norm of $u^0$
to be sufficiently small, since
\bq
\label{ineq-BMO}
[f]_{BMO}\leq 2\norm{f}_{L^\infty},\qquad \text{for all} f\in L^\infty(\R^N).
\eq
\begin{remark}
The smallness condition in \eqref{cond-small} is necessary for the uniqueness of the solution.
As we will see in Subsection~\ref{sub-self-2}, at least in dimension one, it is possible to construct multiple solutions
of \eqref{duhamel} in $X(\R^N\times \R^+;\C)$, if $\alpha$ is close enough to 1.
\end{remark}

The aim of this section is to prove Theorem~\ref{thm:cauchy} using a fixed-point technique.
To this pursuit  we write \eqref{duhamel} as
\bq\label{duhamel2}
u(t)=\boT_{u^0}(u)(t),
\eq
where
\bq\label{op-definition}
\boT_{u^0}(u)(t)=S_\alpha(t) u^0+T(g(u))(t) \quad \text{ and }\quad T(f)(t)=\int_0^t S_\alpha(t-s)f(s)\, ds.
\eq
In the next lemmas we study the semigroup $S_\alpha$ and the operator $T$ to establish  that the application $\boT_{u^0}$
is a contraction
on the ball
$$\boB_\rho( u^0)=\{u \in X(\R^N\times \R^+;\C) : \norm{u-S_\alpha(t) u^0}_{X}\leq \rho\},$$
for some $\rho>0$ depending on the size of the initial data.
\begin{lemma}\label{lemma0}
 There exists $C_0>0$ such that for all $f\in BMO^{-1}_{\alpha}(\R^N)$,
\bq
\sup_{t>0}\sqrt{t}\norm{S_\alpha(t) f}_{L^\infty(\R^N)}\leq C_0\norm{f}_{BMO^{-1}_{\alpha}}.
\eq
\end{lemma}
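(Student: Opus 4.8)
The plan is to exploit the semigroup property of $S_\alpha$ together with a time-averaging trick that converts the time-integrated (Carleson-type) information encoded in $\norm{f}_{BMO^{-1}_\alpha}$ into a pointwise-in-time bound. The starting observation is that, since $\abs{\alpha+i\beta}=1$ and $\Re\big(1/(\alpha+i\beta)\big)=\alpha$, the kernel satisfies $\abs{G_\alpha(x,t)}=(4\pi t)^{-N/2}e^{-\alpha\abs{x}^2/(4t)}$, a genuine Gaussian with exponential spatial decay. This is exactly the place where $\alpha>0$ enters and where the argument degenerates as $\alpha\to0^+$.

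Fix $t>0$ and $x_0\in\R^N$. Using $S_\alpha(t)=S_\alpha(t-s)S_\alpha(s)$ for $0<s<t$ and averaging in $s$ over $(0,t/2)$ gives
\[
S_\alpha(t)f(x_0)=\frac{2}{t}\int_0^{t/2}\int_{\R^N} G_\alpha(x_0-y,t-s)\,(S_\alpha(s)f)(y)\,dy\,ds.
\]
Restricting to $s\in(0,t/2)$ keeps $t-s\in[t/2,t]$ bounded away from $0$, so the kernel stays nonsingular and $\abs{G_\alpha(x_0-y,t-s)}\lesssim t^{-N/2}e^{-\alpha\abs{x_0-y}^2/(4t)}$, while the time integration (now including $s$ near $0$) is precisely what makes the $BMO^{-1}_\alpha$ seminorm available.

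Next I would decompose $\R^N$ into the dyadic annuli $A_0=B_{\sqrt t}(x_0)$ and $A_k=B_{2^k\sqrt t}(x_0)\setminus B_{2^{k-1}\sqrt t}(x_0)$ for $k\geq1$, on which the Gaussian weight is bounded by $e^{-c4^k}$. On each parabolic slab $A_k\times(0,t/2)\subset Q_{2^k\sqrt t}(x_0)$ I apply Cauchy--Schwarz: its measure is $\lesssim 2^{kN}t^{N/2+1}$, and by definition of the seminorm the $L^2$-integral of $S_\alpha(s)f$ over $Q_{2^k\sqrt t}(x_0)$ is at most $(2^k\sqrt t)^{N/2}\norm{f}_{BMO^{-1}_\alpha}=2^{kN/2}t^{N/4}\norm{f}_{BMO^{-1}_\alpha}$. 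Combining these yields, for each $k$, a contribution $\lesssim e^{-c4^k}2^{kN}t^{N/2+1/2}\norm{f}_{BMO^{-1}_\alpha}$; summing over $k$ (the factor $e^{-c4^k}$ dominates the polynomial volume growth $2^{kN}$, so the series converges to a constant depending only on $\alpha$ and $N$) and reinserting the prefactor $\tfrac{2}{t}\,t^{-N/2}$ produces
\[
\abs{S_\alpha(t)f(x_0)}\lesssim t^{-1/2}\norm{f}_{BMO^{-1}_\alpha},
\]
since $\tfrac1t\cdot t^{-N/2}\cdot t^{N/2+1/2}=t^{-1/2}$. Taking the supremum over $x_0$ and $t$ gives the claim.

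The only delicate points are bookkeeping ones: choosing the averaging window $(0,t/2)$ so the kernel is controlled uniformly in $s$, and checking that the powers of $t$ and of $2^k$ collect exactly to give the sharp factor $t^{-1/2}$ together with a convergent dyadic sum. I do not expect a genuine analytic obstacle beyond this scale-counting, as the Gaussian decay of $G_\alpha$ makes the series absolutely convergent.
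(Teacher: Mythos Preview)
Your proof is correct and follows essentially the same approach as the paper. The paper does not spell out the argument but simply cites \cite[Lemma~16.1]{lemarie} for $\alpha=1$ and remarks that the same proof carries over to $\alpha\in(0,1)$ via the semigroup decomposition $S_\alpha(t)=S_\alpha(t-s)S_\alpha(s)$ and the Gaussian decay of $G_\alpha$; what you have written is exactly a detailed version of that standard argument (time-averaging over $s\in(0,t/2)$, dyadic spatial decomposition, Cauchy--Schwarz against the Carleson bound defining $\norm{f}_{BMO^{-1}_\alpha}$, and summation using the super-exponential factor $e^{-c4^k}$).
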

\begin{proof}
The proof in the case $\alpha=1$ is done in \cite[Lemma 16.1]{lemarie}. For $\alpha\in (0,1)$,
decomposing $S_\alpha(t)=S_\alpha(t-s)S_\alpha(s)$ and using  the decay properties of the  kernel $G_\alpha$ associated with the operators $S_{\alpha}(t)$ (see~\eqref{semigrupo}), we can check that the same proof still applies.
\end{proof}
\begin{lemma}\label{lemmaT}
There exists $C_1\geq 1$ such that for all $f\in Y(\R^N\times \R^+;\C)$,
\begin{equation}\label{ineq-lemmaT}
\norm{T(f)}_{X}\leq C_1\norm{f}_Y.
\end{equation}

\end{lemma}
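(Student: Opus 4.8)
The plan is to estimate the $X$-norm of $T(f)$ by splitting it into its two constituent parts, namely the $\sup_{t>0}\|\cdot\|_{L^\infty}$ piece and the semi-norm $[\,\cdot\,]_X$. For the time-weighted gradient bound $\sup_{t>0}\sqrt{t}\,\|\nabla T(f)\|_{L^\infty}$ and the pointwise sup bound, I would use the explicit kernel $G_\alpha$ from \eqref{semigrupo}: since $|\beta - i\alpha| = 1$ and the real part of the exponent is $-\alpha|x|^2/(4t)$ (up to the modulus of $\alpha + i\beta$), the kernel $G_\alpha(\cdot,t)$ and its gradient enjoy Gaussian-type $L^1$ bounds $\|G_\alpha(\cdot,t)\|_{L^1}\lesssim 1$ and $\|\nabla G_\alpha(\cdot,t)\|_{L^1}\lesssim t^{-1/2}$, with constants depending on $\alpha$ through its lower bound. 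Writing $T(f)(t)=\int_0^t S_\alpha(t-s)f(s)\,ds$ and inserting the definition of $\|f\|_Y$, which controls $\sup_s s\|f(s)\|_{L^\infty}$, one estimates $\|\nabla T(f)(t)\|_{L^\infty}\lesssim \int_0^t (t-s)^{-1/2}\|f(s)\|_{L^\infty}\,ds \lesssim \int_0^t (t-s)^{-1/2} s^{-1}\,ds \cdot \|f\|_Y$; the resulting Beta-type integral scales like $t^{-1/2}$, delivering the required $\sqrt{t}$-weighted bound. An entirely analogous (and easier) computation, using $\|G_\alpha(\cdot,t)\|_{L^1}\lesssim 1$, handles $\sup_t\|T(f)(t)\|_{L^\infty}$.

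The genuinely delicate term is the Carleson-measure part of $[T(f)]_X$, namely
\[
\sup_{x\in\R^N,\,r>0}\left(\frac{1}{r^N}\int_{Q_r(x)}|\nabla T(f)(y,t)|^2\,dy\,dt\right)^{1/2}\lesssim \|f\|_Y.
\]
Here I would follow the standard Koch--Tataru splitting adapted to the Ginzburg--Landau semigroup. Fix a parabolic ball $Q_r(x)$ and decompose $f = f\,\chi_{\text{near}} + f\,\chi_{\text{far}}$, where the near part is the restriction of $f$ to a parabolic cylinder of comparable size (say the double $Q_{2r}(x)$ in space-time) and the far part is the remainder. For the near contribution, I would use an $L^2$-type energy or maximal estimate for the semigroup acting on $Y$-data supported locally, exploiting that the $Y$-norm measures exactly an $L^1$-Carleson-type quantity $\frac{1}{r^N}\int_{Q_r}|f|$, and that $S_\alpha$ maps this into the gradient Carleson space by duality with $\BMO_\alpha^{-1}$; the equivalences \eqref{equiv-BMO} and \eqref{equiv-norm} let me borrow the $\alpha=1$ estimates from \cite{koch-tataru,koch-lamm,wang} up to an $\alpha$-dependent constant. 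For the far part, I would exploit the Gaussian off-diagonal decay of $G_\alpha$ and its gradient, turning the spatial and temporal separation into rapidly converging tail sums over dyadic annuli, again reducing to $\sup_s s\|f(s)\|_{L^\infty}$ and the Carleson $Y$-norm.

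The main obstacle I anticipate is verifying that the $\alpha=1$ Carleson-measure estimates of Koch--Tataru transfer cleanly to $\alpha\in(0,1)$ with the correct uniformity. The complex diffusion coefficient $\alpha + i\beta$ introduces oscillation into the kernel, so one cannot literally invoke positivity of the heat kernel; the saving grace is that $\Re(1/(\alpha+i\beta)) = \alpha$ is strictly positive, giving genuine Gaussian decay $|G_\alpha(x,t)|\lesssim t^{-N/2}e^{-c\alpha|x|^2/t}$ and the matching gradient bound. I would therefore first record these kernel estimates carefully (noting the $\alpha$-dependence of the constants $c$ and $C$), and only then run the near/far decomposition, so that every place where the classical proof uses the heat kernel is replaced by the modulus bound for $G_\alpha$. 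Once the kernel bounds are in hand, the remaining work is the routine fractional-integral and dyadic-tail bookkeeping, and collecting the three pieces yields a constant $C_1\geq 1$ depending only on $\alpha$ and $N$, as claimed.
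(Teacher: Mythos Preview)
Your sketch contains a genuine gap in the very first step. The integral you write for the time-weighted gradient bound,
\[
\int_0^t (t-s)^{-1/2}\,s^{-1}\,ds,
\]
diverges at $s=0$, and the analogous integral $\int_0^t s^{-1}\,ds$ for the undifferentiated $L^\infty$ bound diverges as well. The point is that the $Y$-norm has two pieces,
\[
\|f\|_Y=\sup_{t>0} t\|f(t)\|_{L^\infty}+\sup_{x,r}\frac{1}{r^N}\int_{Q_r(x)}|f|,
\]
and you are using only the first. That piece alone cannot control $T(f)$: for small $s$ the input $f(\cdot,s)$ is only $O(s^{-1})$ in $L^\infty$, which is too singular to feed into either $\|G_\alpha(\cdot,t-s)\|_{L^1}$ or $\|\nabla G_\alpha(\cdot,t-s)\|_{L^1}$. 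The correct remedy, after scaling to $t=1$ and $x=0$, is to split the time interval as $[0,\tfrac12]\cup[\tfrac12,1]$: on $[\tfrac12,1]$ the sup-in-time part of $\|f\|_Y$ is harmless, while on $[0,\tfrac12]$ the kernel $G_\alpha(\cdot,1-s)$ is uniformly in $L^\infty$, so one pairs it against the Carleson $L^1$ part of $\|f\|_Y$ on nearby balls and uses Gaussian decay to sum over distant balls. This is precisely the decomposition $I_1+I_2+I_3$ the paper carries out.

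For the Carleson-measure part of $[T(f)]_X$ your plan (near/far splitting plus off-diagonal kernel decay) can be made to work, but the paper takes a shorter route: once the pointwise bound $\|T(f)\|_{L^\infty(B_2\times[0,1])}\lesssim\|f\|_Y$ is in hand, one multiplies the linear equation $i\partial_t W+(\beta-i\alpha)\Delta W=if$ by $-i\eta^2\overline W$ with a spatial cutoff $\eta$ and integrates. The resulting local energy inequality
\[
\|\nabla W\|_{L^2(B_1\times[0,1])}^2\lesssim \|W\|_{L^\infty(B_2\times[0,1])}^2+\|W\|_{L^\infty(B_2\times[0,1])}\|f\|_{L^1(B_2\times[0,1])}
\]
closes immediately against $\|f\|_Y$. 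This avoids any separate far-field analysis for the Carleson term and is indifferent to the oscillation in $G_\alpha$, since it uses only $\Re(i(\beta-i\alpha))=\alpha>0$.
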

\begin{proof}
Estimate \eqref{ineq-lemmaT}  can be proved  using the arguments given in \cite{koch-tataru} or
\cite{wang}. For the convenience of the reader, we sketch the proof following the lines in \cite[Lemma 3.1]{wang}.
 By scaling and translation, it suffices to show that
\bq\label{aestimar}
\abs{T(f)(0,1)}+\abs{\grad T(f)(0,1)}+\left(\int_{Q_1(0)}\abs{\grad T(f)}^2\right)^{1/2}\lesssim\norm{f}_Y.
\eq
Let $B_r=B_r(0)$. Setting $W=T(f)$, we have
\begin{align*}
W(0,1)=&\int_0^1 \int_{\R^N}G_{\alpha}(-y,1-s)f(y,s)dy ds \\
=&\left(\int_{1/2}^1\int_{\R^N}+\int_{0}^{1/2}\int_{B_2}+\int_{0}^{1/2}\int_{\R^N\setminus B_2} \right)G_{\alpha}(-y,1-s)f(y,s)dy ds\\
:=&I_1+I_2+I_3.
\end{align*}
Since $\abs{G_\alpha(y,1-s)}=\dfrac{ e^{-\frac{\alpha\abs{y}^2}{4(1-s)} } }{ (4\pi (1-s))^{N/2}},$
we obtain
 \begin{align*}
 \abs{I_1}&\leq  \int_{1/2}^1\int_{\R^N}\abs{G_{\alpha}(-y,1-s)}\abs{f(y,s)}dy ds\\
  &\leq \sup_{\frac{1}{2}\leq s\leq 1}\norm{f(s)}_{L^\infty}
    \left(\int_{\frac{1}{2}}^{1}\int_{\mathbb{R}^{n}}\abs{G_\alpha(-y,\ 1-s)}dyds\right)\lesssim \Vert f\Vert_{Y},
 \end{align*}
  \begin{align*}
 \abs{I_2}&\leq  \int_{0}^{1/2}\int_{B_2}\abs{G_{\alpha}(-y,1-s)}\abs{f(y,s)}dy ds\\
  &\lesssim
\sup_{0\leq s\leq\frac{1}{2}}\Vert G_\alpha(\cdot, 1-s)\Vert_{L^{\infty}(\mathbb{R}^{N})}
\int_{B_{2}\times[0,\frac{1}{2}]}|f(y,\ s)|dyds\lesssim  \Vert f\Vert_{Y}
 \end{align*}
and
\begin{align*}
 \abs{I_3}&\leq  \int_{0}^{1/2}\int_{\R^N \setminus B_2}\abs{G_{\alpha}(-y,1-s)}\abs{f(y,s)}dy ds\\
 &\leq\ C\int_{0}^{\frac{1}{2}}\int_{\mathbb{R}^{N}\backslash B_{2}}e^{-\frac{\alpha|y|^{2}}{4}}|f(y,\ s)|dyds\\
&\leq\ C\left(\sum_{k=2}^{\infty}k^{n-1}e^{-\alpha\frac{k^{2}}{4}}\right)
\left(\sup_{y\in \mathbb{R}^{N}}\int_{Q_{1}(y)}|f(y,\ s)|dyds\right)\lesssim \Vert f\Vert_{Y}.
   \end{align*}
The quantity $\abs{\grad T(f)(0,1)}$ can be bounded in a similar way. The last term in the l.h.s.\ of \eqref{aestimar}
can be controlled using an energy estimate. Indeed,  $W$ satisfies the equation
\bq\label{eq-energy}
i\partial_t W+(\beta -i\alpha)\Delta W=if
\eq
with initial condition $W(\cdot,0)=0$. Let $\eta\in C_0^\infty(B_2)$
be a real-valued cut-off function such that $0\leq \eta\leq 1$ on $\R^N$ and $\eta=1$ on $B_1$.
By multiplying \eqref{eq-energy} by $ -i \eta^2 \overline W$, integrating and taking real part, we get
\bqq
\frac12 \partial_t \int_{\R^N}\eta^2 \abs{W}^2+\alpha\int_{\R^N}\eta^2\abs{\grad W}^2
+2\Re\left(
(\alpha+i\beta)\int_{\R^N}\eta\grad \eta \overline W\grad W
\right)
=\int_{\R^N}\eta^2\Re(f \overline W).
\eqq
Using that $\abs{\alpha+i\beta}=1$ and integrating in time between $0$ and $1$, it follows that
$$
\frac12 \int_{\R^N}\eta^2 \abs{W(x,1)}^2 +\alpha\int_{\R^N\times[0,1]}\eta^2\abs{\grad W}^2
\leq
\int_{\R^N\times[0,1]} (2 \eta \abs{\grad \eta} \abs{W}\abs{\grad W}+\eta^2\abs{f}\abs{W}).
$$
From the inequality $ab\leq \ve a^2+b^2/(4\ve),$ with $a=\eta\abs{\grad W}$, $b=2\abs{\grad \eta} \abs{W}$ and $\ve=\alpha/2$,
we deduce that
$$
\frac{\alpha}2 \int_{\R^N\times[0,1]}\eta^2\abs{\grad W}^2 \leq
 \int_{\R^N\times[0,1]}\big(\frac{2}{\alpha}\abs{\grad \eta}^2\abs{W}^2+\eta^2\abs{f}\abs{W}\big).
$$
By the  definition of $\eta$, this implies that
\bq\label{desigualdad}
\norm{\grad W}^2_{L^2(B_1\times[0,1])}\lesssim
\norm{ W}^2_{L^\infty(B_2\times[0,1])}+\norm{ W}_{L^\infty(B_2\times[0,1])}\norm{f}_{L^1(B_2\times[0,1])}.
\eq
From the first part of the proof, we have 
$$\norm{ W}_{L^\infty(B_2\times[0,1])}\leq C \norm{f}_{Y}.$$
Using also that
$$\norm{f}_{L^1(B_2\times[0,1])}\lesssim \norm{f}_{Y},$$
we conclude from \eqref{desigualdad} that
$$\norm{\grad W}_{L^2(B_1\times[0,1])}\lesssim \norm{f}_{Y},$$
which finishes the proof.
\end{proof}
\begin{lemma}\label{lemma-bola}
Let $\alpha\in (0,1]$ and $\rho, \varepsilon, L>0$. There exists $C_2\geq 1$, depending on $\alpha$ and $N$, such that for all
 $u^0\in L^\infty(\R^N)$
\bq\label{semigroup}
\norm{S_\alpha(t) u^0}_X\leq C_2 (\norm{u^0}_{L^\infty}+[u^0]_{BMO}).
\eq
If in addition $\norm{u^0}_{L^\infty}\leq L$ and  $[u^0]_{BMO}\leq \ve$,
then for all $u\in \boB_\rho(u^0)$ we have
\bq\label{est-bola}
\sup_{t>0} \norm{u}_{L^\infty}\leq C_2( \rho+L )\quad \text{and}\quad [u]_{X}\leq C_2(\rho+\ve).
\eq
\end{lemma}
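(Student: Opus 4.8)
The plan is to reduce everything to pointwise-in-$t$ mapping properties of the Ginzburg--Landau semigroup $S_\alpha$ and then to assemble them according to the definitions of the $X$-norm and of $[\cdot]_{BMO_\alpha}$. I first record the elementary $L^1$ estimates for the kernel. Since $|\alpha+i\beta|=1$ and $\Re\big(1/(\alpha+i\beta)\big)=\alpha$, one has $|G_\alpha(x,t)|=(4\pi t)^{-N/2}e^{-\alpha|x|^2/(4t)}$ and $|\grad_x G_\alpha(x,t)|=\frac{|x|}{2t}(4\pi t)^{-N/2}e^{-\alpha|x|^2/(4t)}$, whence $\norm{G_\alpha(\cdot,t)}_{L^1}=\alpha^{-N/2}$ and $\norm{\grad G_\alpha(\cdot,t)}_{L^1}\leq C_\alpha\,t^{-1/2}$ for a constant $C_\alpha$ depending only on $\alpha,N$. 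By Young's inequality this yields $\sup_{t>0}\norm{S_\alpha(t)u^0}_{L^\infty}\leq\alpha^{-N/2}\norm{u^0}_{L^\infty}$ and $\sup_{t>0}\sqrt t\,\norm{\grad S_\alpha(t)u^0}_{L^\infty}\leq C_\alpha\norm{u^0}_{L^\infty}$. Finally, the Carleson part of $[S_\alpha(t)u^0]_X$ is, by inspection of the definitions, exactly $[u^0]_{BMO_\alpha}$, which by \eqref{equiv-BMO} is $\leq\Lambda^{-1}[u^0]_{BMO}$. Adding the three contributions proves \eqref{semigroup} with $C_2:=\max\{\alpha^{-N/2}+C_\alpha,\Lambda^{-1},1\}$.

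For \eqref{est-bola} I would split $u=(u-S_\alpha(t)u^0)+S_\alpha(t)u^0$. Because $\sup_{t>0}\norm{\cdot}_{L^\infty}\leq\norm{\cdot}_X$ and $[\cdot]_X\leq\norm{\cdot}_X$, the membership $u\in\boB_\rho(u^0)$ controls the difference by $\rho$ in both seminorms. The $L^\infty$ estimate above then gives $\sup_{t>0}\norm{u}_{L^\infty}\leq\rho+\alpha^{-N/2}L\leq C_2(\rho+L)$. For the seminorm we have $[u]_X\leq\rho+[S_\alpha(t)u^0]_X$, and the delicate point is that the right-hand side must be bounded by $C_2(\rho+\ve)$ with \emph{no} $L$-dependence; consequently the gradient term of $[S_\alpha(t)u^0]_X$ must be controlled by $[u^0]_{BMO}$ rather than by $\norm{u^0}_{L^\infty}$ as above.

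This BMO gradient bound is the main obstacle, and I would prove
$$\sup_{t>0}\sqrt t\,\norm{\grad S_\alpha(t)u^0}_{L^\infty}\lesssim[u^0]_{BMO}$$
by exploiting $\int_{\R^N}\grad G_\alpha(\cdot,t)=0$. Writing, for $B=B_{\sqrt t}(x)$ with average $u^0_B$, the identity $\grad S_\alpha(t)u^0(x)=\int_{\R^N}\grad G_\alpha(x-y,t)\,(u^0(y)-u^0_B)\,dy$ and decomposing $\R^N$ into $B$ and the dyadic annuli $B_{2^{k+1}\sqrt t}(x)\setminus B_{2^k\sqrt t}(x)$, the Gaussian factor in $\grad G_\alpha$ dominates both the volume growth of the annuli and the linear-in-$k$ growth $|u^0_{2^{k+1}B}-u^0_B|\lesssim(k+1)[u^0]_{BMO}$, so the resulting series converges and produces the bound $t^{-1/2}[u^0]_{BMO}$ uniformly in $x,t$. (Alternatively one may use $\grad S_\alpha(t)u^0=S_\alpha(t)\grad u^0$ together with Lemma~\ref{lemma0} and the estimate $\norm{\grad u^0}_{BMO^{-1}_\alpha}\lesssim[u^0]_{BMO}$ coming from Theorem~\ref{tataru} and \eqref{equiv-norm}.) With this in hand, $[S_\alpha(t)u^0]_X\lesssim[u^0]_{BMO}\leq\ve$, hence $[u]_X\leq C_2(\rho+\ve)$ after enlarging $C_2$ if necessary, which completes the proof of \eqref{est-bola}.
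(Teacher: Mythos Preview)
Your proof is correct and essentially follows the paper's approach. The only difference is organizational: the paper bounds $[S_\alpha(t)u^0]_X\lesssim[u^0]_{BMO}$ from the outset---using precisely your ``alternative'' route via Lemma~\ref{lemma0}, Theorem~\ref{tataru} and \eqref{equiv-BMO} to control $\sup_{t>0}\sqrt t\,\norm{\grad S_\alpha(t)u^0}_{L^\infty}$ by $\norm{\grad u^0}_{BMO_\alpha^{-1}}\lesssim[u^0]_{BMO_\alpha}$---and then uses this single estimate for both \eqref{semigroup} and \eqref{est-bola}; you instead first establish \eqref{semigroup} with the cruder $L^\infty$ bound on the gradient and then separately derive the BMO gradient control needed for \eqref{est-bola}. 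Your direct dyadic argument for the latter is a valid self-contained alternative, but since the paper already has Lemma~\ref{lemma0} and Theorem~\ref{tataru} available, its route is shorter.
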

\begin{proof}
We first control $\norm{S_\alpha(t) u^0}_X$. On the one hand, using the definition of $G_\alpha$ and the relation $\alpha^2+\beta^2=1$, we obtain
$$
\norm{S_\alpha(t)u^0 }_{L^\infty}=
\norm{G_\alpha\ast u^0}_{L^\infty}\leq \norm{G_\alpha}_{L^1} \norm{u^0}_{L^\infty}=
\alpha^{-\frac{N}{2}} \norm{u^0}_{L^\infty}, \qquad \forall\, t>0.
$$
Thus
\begin{equation}
 \label{lemma-bola-1}
 \sup_{t>0} \norm{S_\alpha(t)u^0}_{L^\infty}\leq \alpha^{-\frac{N}{2}} \norm{u^0}_{L^\infty}.
\end{equation}
On the other hand, using Lemma~\ref{lemma0}, Theorem~\ref{tataru} and \eqref{equiv-BMO},
\begin{equation} \label{lemma-bola-2}
\begin{aligned}
\phantom{.}[ S_\alpha(t) u^0 ]_{X}&=
\sup_{t>0}\sqrt{t}\norm{\grad S_\alpha(t)u^0}_{L^\infty}+
\sup_{\substack{ x\in \R^N\\ r>0}}\left( \frac1{r^N}\int_{Q_r(x)} \abs{\grad S_\alpha(t) u^0 }^2\,dt\,dy \right)^\frac12
 \\
&\lesssim \norm{\grad u^0}_{BMO_\alpha^{-1}}+[u^0]_{BMO_\alpha}\\
&\lesssim  [u^0]_{BMO_\alpha} \\
&\lesssim  [u^0]_{BMO}.
\end{aligned}
\end{equation}
The estimate in \eqref{semigroup} follows from \eqref{lemma-bola-1} and \eqref{lemma-bola-2},
and we w.l.o.g.\ can choose $C_2\geq 1$.

Finally, using \eqref{lemma-bola-1}, given $u^0$ such that $\norm{u^0}_{L^\infty}\leq L$ and $[u^0]_{BMO}\leq \varepsilon$, for all $u\in \boB_\rho( u^0)$   we have
$$
\norm{u}_{L^\infty}
\leq \norm{u-S_\alpha(t)u^0}_{L^\infty}+\norm{S_\alpha(t)u^0}_{L^\infty}
\leq \norm{u-S_\alpha(t)u^0}_{X}+\norm{S_\alpha(t)u^0}_{L^\infty}
\leq C_2( \rho+L),
$$
and, using  \eqref{lemma-bola-2},
$$
[u]_{X}\leq [u-S_\alpha(t)u^0]_{X}+ [S_\alpha(t)u^0]_{X}
\leq \norm{u-S_\alpha(t)u^0}_{X}+[S_\alpha(t)u^0]_{X}
\leq C_2(\rho+\varepsilon),
$$
which finishes the proof of \eqref{est-bola}.
\end{proof}
Now we proceed to bound the nonlinear term
$$g(u)=-2i(\beta-i\alpha)\frac{\bar u (\grad u)^2}{1+\abs{u}^2}.$$
\begin{lemma}\label{lemma-g} For all $u\in X(\R^N\times \R^+;\C)$, we have
 $$\qquad\norm{g(u)}_Y\leq  [u]_X^2.$$
\end{lemma}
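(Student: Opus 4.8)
The plan is to reduce everything to the pointwise inequality $\abs{g(u)}\leq \abs{\grad u}^2$ already recorded in \eqref{est-g}, and to observe that this quadratic bound matches precisely the two constituent pieces of the $Y$-norm against the \emph{squares} of the two pieces of the seminorm $[u]_X$. Write $A:=\sup_{t>0}\sqrt{t}\norm{\grad u}_{L^\infty}$ and $B:=\sup_{x\in\R^N,\,r>0}\big(\frac{1}{r^N}\int_{Q_r(x)}\abs{\grad u}^2\,dt\,dy\big)^{1/2}$, so that $[u]_X=A+B$.

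First I would bound the $L^\infty$-part of $\norm{g(u)}_Y$. Using \eqref{est-g}, for each $t>0$ one has $t\norm{g(u)(t)}_{L^\infty}\leq t\norm{\grad u(t)}_{L^\infty}^2=\big(\sqrt{t}\norm{\grad u(t)}_{L^\infty}\big)^2$; taking the supremum over $t>0$ and using that the supremum of squares of nonnegative quantities equals the square of the supremum (since $x\mapsto x^2$ is increasing on $[0,\infty)$) gives $\sup_{t>0}t\norm{g(u)}_{L^\infty}\leq A^2$.

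Next I would treat the Carleson-type part. Again by \eqref{est-g}, for every $x\in\R^N$ and $r>0$,
\[
\frac{1}{r^N}\int_{Q_r(x)}\abs{g(u)(y,t)}\,dt\,dy\leq \frac{1}{r^N}\int_{Q_r(x)}\abs{\grad u(y,t)}^2\,dt\,dy\leq B^2,
\]
and taking the supremum over $x$ and $r$ shows that this part of $\norm{g(u)}_Y$ is at most $B^2$.

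Finally, adding the two estimates yields $\norm{g(u)}_Y\leq A^2+B^2$, and since $A,B\geq 0$ we have $A^2+B^2\leq (A+B)^2=[u]_X^2$, which is the claim. There is no genuine obstacle here; the only point requiring a little care is that one must keep the two pieces separate and recombine via $A^2+B^2\leq (A+B)^2$ rather than bounding each piece by $[u]_X^2$ individually, since the latter would only yield the weaker constant $2[u]_X^2$ and would not give the sharp statement of the lemma.
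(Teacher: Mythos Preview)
Your proof is correct and follows exactly the same approach as the paper: use the pointwise bound $\abs{g(u)}\leq\abs{\nabla u}^2$ to control the two pieces of $\norm{g(u)}_Y$ by $A^2$ and $B^2$ respectively, then conclude via $A^2+B^2\leq(A+B)^2=[u]_X^2$. The paper compresses these steps into a single displayed inequality, but the argument is identical.
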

\begin{proof}
Let $u\in X(\R^N\times \R^+;\C)$. Using \eqref{est-g} and
the definitions of the norms in  $Y$ and $X$, it follows that
\begin{align*}
\norm{g(u)}_{Y}\leq
\left( \sup_{t>0}\sqrt{t}\norm{\grad u}_{L^\infty}\right)^2
+
\sup_{\substack{ x\in \R^N\\ r>0}}\frac1{r^N}\int_{Q_r(x)} \abs{\grad u }^2\,dt\,dy
\leq    [u]_X^2.
\end{align*}
\end{proof}
Now we have all the estimates to prove that $\boT_{u^0}$ is a  contraction on $\boB_\rho(u^0)$.
\begin{prop}\label{T-contraction}
Let $\alpha\in(0,1]$ and $\rho, \ve>0$. Given any $u^0\in L^{\infty}(\R^N)$ with $[u^0]_{BMO}\leq \ve$, 
the operator  $\boT_{u^0}$ given in \eqref{op-definition} defines a contraction on $\boB_\rho(u^0)$, whenever $\rho$ and $\ve$ satisfy
\bq\label{condition-rho}
8C_1 C_2^2(\rho+\ve)^2\leq \rho.
\eq
Moreover, for all $u,v\in X(\R^N\times \R^+;\C)$,
\bq\label{contraction2}
 \norm{ T(g(u))-T(g(v))}_{X}\leq C_1(2[u]^2_X+[u]_X+[v]_X)
 \norm{u-v}_{X}.
 \eq
Here, $C_1\geq1$ and $C_2\geq 1$ are the constants in Lemmas~\ref{lemmaT} and \ref{lemma-bola}, respectively.
\end{prop}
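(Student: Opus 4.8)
The plan is to verify the two defining properties of a contraction on $\boB_\rho(u^0)$—that $\boT_{u^0}$ maps the ball into itself and that it is Lipschitz with constant $<1$—with the estimate \eqref{contraction2} doing the heavy lifting for both. All the analytic input is already packaged in the preceding lemmas: Lemma~\ref{lemmaT} ($\norm{T(f)}_X \le C_1\norm{f}_Y$), Lemma~\ref{lemma-g} ($\norm{g(u)}_Y \le [u]_X^2$), and the bound $[u]_X \le C_2(\rho+\ve)$ for $u\in\boB_\rho(u^0)$ from Lemma~\ref{lemma-bola}. Since $\boT_{u^0}(u) - S_\alpha(t)u^0 = T(g(u))$, these give at once
\[
\norm{\boT_{u^0}(u) - S_\alpha(t)u^0}_X \le C_1\norm{g(u)}_Y \le C_1[u]_X^2 \le C_1 C_2^2(\rho+\ve)^2 \le \tfrac{\rho}{8} \le \rho,
\]
where the penultimate inequality is exactly hypothesis \eqref{condition-rho}. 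Hence $\boT_{u^0}$ sends $\boB_\rho(u^0)$ into itself (with room to spare), and it remains only to prove \eqref{contraction2} and extract the contraction constant.

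For \eqref{contraction2}, I would first reduce, via Lemma~\ref{lemmaT}, to the Lipschitz bound
\[
\norm{g(u)-g(v)}_Y \le \big(2[u]_X^2 + [u]_X + [v]_X\big)\norm{u-v}_X ,
\]
and establish this by a pointwise estimate on $g(u)-g(v)$. Writing $g(w) = -2i(\beta-i\alpha)\,\psi(w)(\grad w)^2$ with $\psi(w) := \bar w/(1+\abs{w}^2)$ and using $\abs{\beta-i\alpha}=1$, I would telescope
\[
\psi(u)(\grad u)^2 - \psi(v)(\grad v)^2 = \big(\psi(u)-\psi(v)\big)(\grad u)^2 + \psi(v)\,\grad(u-v)\cdot\grad(u+v),
\]
exploiting $(\grad u)^2 - (\grad v)^2 = \grad(u-v)\cdot\grad(u+v)$. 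The two resulting terms are controlled by the two pointwise facts that $\abs{\psi(w)} = \abs{w}/(1+\abs{w}^2) \le 1/2$ and that $\psi$ is $1$-Lipschitz on $\C$; the latter is the one genuinely computational point, and I would verify it by checking that the real Jacobian of $\psi$ has operator norm $1/(1+\abs{w}^2)\le 1$. This yields the pointwise bound $\abs{g(u)-g(v)} \le 2\abs{u-v}\abs{\grad u}^2 + \abs{\grad(u-v)}\,(\abs{\grad u}+\abs{\grad v})$.

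It then remains to feed this pointwise bound into the definition of $\norm{\cdot}_Y$. For the $\sup_t t\norm{\cdot}_{L^\infty}$ part one distributes the weights as $t\abs{\grad u}^2 = (\sqrt t\abs{\grad u})^2$ and $t\abs{\grad(u-v)}\abs{\grad u} = (\sqrt t\abs{\grad(u-v)})(\sqrt t\abs{\grad u})$, and bounds the $L^\infty$ factors of $u-v$ by $\norm{u-v}_X$; for the Carleson part one pulls out $\sup\abs{u-v}\le\norm{u-v}_X$ on the first term and applies Cauchy--Schwarz together with the $L^2$-triangle inequality on the second. Each piece lands in the semi-norms making up $[u]_X,[v]_X$ and $[u-v]_X\le\norm{u-v}_X$, and summing gives exactly the displayed bound, hence \eqref{contraction2}. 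Finally, for $u,v\in\boB_\rho(u^0)$ I would insert $[u]_X,[v]_X\le C_2(\rho+\ve)=:\sigma$; condition \eqref{condition-rho} forces $8C_1\sigma^2\le\rho\le\sigma$ (using $C_2\ge1$), whence $C_1\sigma\le 1/8$ and the Lipschitz constant $C_1(2\sigma^2+2\sigma)=2C_1\sigma(\sigma+1)\le \tfrac14\cdot\tfrac98<1$, completing the contraction argument. The main obstacle is the pointwise Lipschitz estimate—choosing the telescoping so that the bounded factor $\psi(v)$ keeps its $1/2$ bound and the sharp $1$-Lipschitz constant of $\psi$ survives, which is what makes the clean coefficients $2,1,1$ appear.
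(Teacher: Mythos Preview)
Your argument is correct and follows essentially the same route as the paper: the same telescoping of $g(u)-g(v)$, the same pointwise bound $\abs{g(u)-g(v)}\le 2\abs{u-v}\abs{\grad u}^2+\abs{\grad(u-v)}(\abs{\grad u}+\abs{\grad v})$, and the same passage to the $Y$-norm via Cauchy--Schwarz on the Carleson piece. The only cosmetic differences are that the paper verifies the $1$-Lipschitz bound on $\psi(w)=\bar w/(1+\abs{w}^2)$ by the algebraic inequality $\frac{1+\abs{u}\abs{v}}{(1+\abs{u}^2)(1+\abs{v}^2)}\le 1$ rather than via the Jacobian, and the paper's bookkeeping for the contraction constant goes through the intermediate bound $C_2(\rho+\ve)\le 5/32$ to reach $1/2$, whereas your chain $8C_1\sigma^2\le\rho\le\sigma\Rightarrow C_1\sigma\le 1/8$ lands at $9/32$; both are fine.
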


\begin{remark}
Using the notation introduced in \eqref{set-S}, the hypothesis \eqref{condition-rho} means that
$(\rho,\ve)\in\mathcal S(8C_1C_2^2)$. Therefore, by  \eqref{cota-rho},
\bq\label{est-rho}
\rho\leq \frac1{8C_1C_2^2}, \ \text{ and } \ \ve \leq \frac1{32C_1C_2^2},
\eq
so $\rho$ and $\ve$ are actually small. Since $C_1,C_2\geq 1$, we have
\bq\label{para-contraction}
C_2(\rho+\ve)\leq \frac{5}{32}.
\eq
\end{remark}
\begin{proof}
Let $u^0\in L^\infty(\R^N)$ with $\norm{u^0}_{L^\infty}\leq L$ and  $[u^0]_{BMO}\leq \ve$, and  $u\in \boB_\rho(u^0)$.
Using Lemma~\ref{lemmaT}, Lemma~\ref{lemma-bola} and Lemma~\ref{lemma-g}, we have
\bqq
\norm{\boT_{u^0}(u)-S_\alpha(t)u^0}_X=\norm{T(g(u))}_X\leq C_1 \norm{g(u)}_Y\leq C_1 [u]_{X}^{2}
\leq C_1 C_2^2 (\rho + \ve)^2.
\eqq
Therefore  $\boT_{u^0}$ maps $\boB_\rho(u^0)$ into itself provided that
\bq\label{proof-bola}
C_1C_2^2(\rho+\ve)^2\leq \rho.
\eq
Notice that by \eqref{encajonados}, the condition \eqref{condition-rho} implies that \eqref{proof-bola} is satisfied.

To prove \eqref{contraction2}, we use the decomposition
\bqq
g(u)-g(v)=-2i(\beta-i\alpha)\left[
\left(\frac{\bar u}{1+\abs{u}^2}-\frac{\bar v}{1+\abs{v}^2}\right)
(\grad u)^2
+
\frac{\bar v}{1+\abs{v}^2}
((\grad u)^2-(\grad v)^2))
\right].
\eqq
Since
\bqq
 \left| \frac{\bar u}{1+\abs{u}^2}-\frac{\bar v}{1+\abs{v}^2}\right|\leq
|u-v|\frac{1+|u|\, |v|}{(1+|u|^2)(1+|v|^2)}
\leq |u-v|,
\eqq
and using  \eqref{ineq0},
we obtain
$$
|g(u)-g(v)|
\leq 2\, |u-v|\, |\grad u|^2 +|\grad u - \grad v|\, (|\grad u|+|\grad v|).
$$
Therefore
\bq\label{diff-T}
\norm{g(u)-g(v)}_Y\leq 2\norm{\abs{u-v}\abs{\grad u}^2}_Y+\norm{\abs{\grad u-\grad v}(\abs{\grad u}+\abs{\grad v})}_Y:=I_1+I_2.
\eq
For $I_1$, it is immediate that
\bq\label{I1}
 I_1 \leq 2 \sup_{t>0}\norm{u-v}_{L^\infty}
 \left[
  \Big(\sup_{t>0} \sqrt{t} \norm{\grad u}_{L^\infty}\Big)^2+
  \sup_{\substack{ x\in \R^N\\ r>0}} \frac1{r^N}\int_{Q_r(x)} \abs{\grad u}^2\,dt\,dy
  \right]
 \leq 2\norm{u-v}_X [u]^2_X.
\eq
Similarly, using the Cauchy--Schwarz inequality,
\bq\label{I2}
\begin{aligned}
 I_2\leq&
 \left(\sup_{t>0}\sqrt{t}\norm{\grad u-\grad v}_{L^\infty}\right)
 \left(\sup_{t>0}\sqrt{t}(\norm{\grad u}_{L^\infty}+\norm{\grad v}_{L^\infty})\right)\\
&+\sup_{\substack{ x\in \R^N\\ r>0}}\frac1{r^{N}}\left(\norm{\grad u-\grad v}_{L^2(Q_r(x))}\right)
\left(\norm{\grad u}_{L^2(Q_r(x))}+\norm{\grad v}_{L^2(Q_r(x))}\right)
\\
\leq &\norm{u-v}_X([u]_X+[v]_X).
\end{aligned}
\eq
Using Lemma~\ref{lemmaT}, \eqref{diff-T}, \eqref{I1} and \eqref{I2}, we conclude that
\bq\label{contraction-proof}
\norm{ T(g(u))-T(g(v))}_{X}\leq C_1 (2[u]^2_X+[u]_X+[v]_X)\norm{u-v}_{X}.
\eq
Let  $u,v\in \boB_\rho(u^0)$, by Lemma~\ref{lemma-bola} and \eqref{para-contraction}
\bq\label{cota-diff1}
[u]_X\leq C_2(\rho+\ve)\leq \frac5{32},
\eq
so that
\bq\label{cota-diff2}
2[u]^2_X+[u]_X+[v]_X\leq \frac{37}{16}C_2(\rho+\ve)<3C_2(\rho+\ve).
\eq
Then \eqref{contraction-proof}  implies that
\bq\label{proof-contraction}
\norm{ \boT_{u^0}(u)-\boT_{u^0}(v)}_{X}\leq 3C_1C_2(\rho+\ve)\norm{u-v}_X.
\eq
From \eqref{est-rho}, we conclude that
\bq\label{cota-diff3}3C_1C_2(\rho+\ve)\leq \frac{15}{32}\leq \frac12,\eq
and then \eqref{proof-contraction} yields that the operator
$\boT_{u^0}$ defined in \eqref{op-definition} is a contraction on $\boB_\rho(u^0)$.
This concludes the proof of the proposition.

\end{proof}
%
%
\begin{proof}[Proof of Theorem~\ref{thm:cauchy}]
Let us set $C=C_1C_2^2$ and $K=C_2$,  where $C_1$ and $C_2$ are the constants in Lemma~\ref{lemmaT} and Lemma~\ref{lemma-bola} respectively.
Since $\rho$ satisfies \eqref{hyp1},  Proposition~\ref{T-contraction} implies that there exists a
solution $u$ of equation \eqref{duhamel2} in the ball $\boB_\rho(u^0)$, and in particular from Lemma~\ref{lemma-bola}
$$
\sup_{t>0}\norm{u}_{L^\infty}\leq K(\rho+L)
\qquad \text{and} \qquad
 [u]_X\leq K(\rho+\ve).
$$
To prove the uniqueness part of the theorem,
let us assume that $u$ and $v$ are solutions of \eqref{duhamel} in $X(\R^N\times \R^{+}; \C)$  such that
\begin{equation}\label{two-con}
  [u]_{X},[v]_{X}\leq K(\rho+\ve),
\end{equation}
with the same initial condition $u^0$. By the definitions of $C$ and $K$, \eqref{hyp1} and \eqref{two-con},
the estimates in \eqref{est-rho} and \eqref{para-contraction} hold.
It follows that \eqref{cota-diff1}, \eqref{cota-diff2} and \eqref{cota-diff3} are satisfied.
Then, using \eqref{contraction2},
\begin{align*}
\norm{u-v}_{X}
&=\norm{T(g(u))-T(g(v))}_X\leq C_1  (2[u]^2_X+[u]_X+[v]_X) \norm{u-v}_{X} \\
&\leq   \frac{1}{2} \norm{u-v}_{X}.
\end{align*}
From which it follows that $u=v$.

To prove the dependence of the solution with respect to the initial data (part {\it{(iv)}}),
consider $u$ and $v$ solutions of \eqref{duhamel} satisfying \eqref{two-con}
with initial conditions $u^0$ and $v^0$.
Then, by definition, $u=\boT_{u^0}(u)$, $v=\boT_{v^0}(v)$ and
\bqq
\norm{u-v}_X=\norm{\boT_{u^0}(u)-\boT_{v^0}(v)}_X\leq \norm{S_\alpha(u^0-v^0)}_{X}+
 \norm{T(g(u))-T(g(v))}_{X}.
\eqq
Using \eqref{ineq-BMO}, \eqref{semigroup} and \eqref{contraction2} and arguing as above, we have
\begin{align*}
\norm{u-v}_X
&\leq  C_2(\norm{u^0-v^0}_{L^\infty}+ [u^0-v^0]_{BMO})+C_1 (2[u]_X^2+[u]_X+[v]_X)\norm{u-v}_X\\
&\leq 3C_2 \norm{u^0-v^0}_{L^\infty}+\frac{1}{2}\norm{u-v}_X.
\end{align*}
This yields \eqref{cont-CI}, since $K=C_2$.

The assertions in {\em(ii)} and {\em(iii)}  follow from Theorem~\ref{thm-equiv}.
\end{proof}
%
\subsection{The Cauchy problem for the LLG equation}
\label{sec-LLG}
By using the inverse of the stereographic projection
$\P^{-1} :\C\to \S^2\setminus\{0,0,-1\}$, that is explicitly given by
$\m=(m_1, m_2, m_3)=\P^{-1}(u)$, with
\bq\label{inverse-P}
m_1=\frac{2\Re u}{1+\abs{u}^2}, \quad m_2=\frac{2\Im u}{1+\abs{u}^2}, \quad m_3=\frac{1-\abs{u}^2}{1+\abs{u}^2},
\eq
we will be able to  establish the following
  global well-posedness result for \eqref{LLG}.
\begin{thm}\label{thm:cauchy-LLG}
Let $\alpha\in(0,1]$. There exist constants $C\geq 1$ and $K\geq 4$, such that for any $\delta\in (0,2]$,  $\ve_0>0$ and  $\rho>0$
such that
\bq\label{cond-LLG}
 8K^4C \delta^{-4}(\rho+8\delta^{-2}\ve_0)^2\leq \rho,
\eq
if  $\m^0=(m_1^0,m_2^0,m_3^0)\in L^\infty(\R^N;\S^2)$ satisfies
\bq\label{cond-CI-1}
\inf_{\R^N}{m^0_3}\geq-1+\delta \quad\textup{ and }\quad[\m^0]_{BMO}\leq \ve_0,
\eq
then there exists a unique solution $\m=(m_1,m_2,m_3)\in X(\R^N\times \R^+;\S^2)$ of \eqref{LLG}
such that
\bq\label{cond-small-m}
 \inf_{\substack{x\in \R^N\\ t>0}} m_3(x,t)\geq -1+\frac{2}{1+K^2(\rho+\delta^{-1})^2}
 \quad \textup{ and }\quad [\m]_{X}\leq 4K(\rho+8\delta^{-2}\ve_0).
\eq
 Moreover, we have the following properties.
\begin{itemize}
\item[{\it{i)}}] $\m\in C^{\infty}(\R^N\times \R^{+}; \mathbb{S}^2)$.
\item[{\it{ii)}}] $|\m(\cdot, t)-\m^{0}|\longrightarrow 0$ in $S'(\R^N)$ as $t\longrightarrow 0^{+}$.
\item[{\it{iii)}}] Assume that $\m$ and $\n$ are respectively smooth solutions to \eqref{duhamel} satisfying \eqref{cond-small-m} with initial conditions $\m^0$ and $\n^0$
 satisfying \eqref{cond-CI-1}. Then
\bq\label{cont-CI-1}
 \norm{\m-\n}_{X}\leq 120K\delta^{-2}\norm{\m^0-\n^0}_{L^\infty}.
\eq
\end{itemize}
\end{thm}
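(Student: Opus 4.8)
The plan is to reduce everything to the already-established well-posedness for the dissipative Schr\"odinger equation (Theorem~\ref{thm:cauchy}) via the stereographic projection $\P$, and then transfer the conclusions back through $\P^{-1}$. First I would record the two elementary identities $\abs{\P(\m)}^2 = (1-m_3)/(1+m_3)$ and $1+m_3 = 2/(1+\abs{\P(\m)}^2)$, which are the bridge between the pointwise constraints on $\m$ and those on $u := \P(\m)$. Setting $u^0 = \P(\m^0)$, the hypothesis $\inf m_3^0 \geq -1+\delta$ gives $\norm{u^0}_{L^\infty}^2 \leq (2-\delta)/\delta \leq \delta^{-2}$, i.e.\ $\norm{u^0}_{L^\infty}\leq \delta^{-1} =: L$. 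For the BMO semi-norm I would use that $\m^0$ takes values in the convex set $D_\delta = \{m\in\R^3 : \abs{m}\leq 1,\ m_3\geq -1+\delta\}$, on which $\P$ is smooth with $\abs{D\P}\lesssim \delta^{-2}$; then the standard bound $\fint_B\abs{f-f_B}\leq 2\fint_B\abs{f - c}$ applied with $c=\P(\m^0_B)$ (legitimate since $\m^0_B\in D_\delta$ by convexity) yields $[u^0]_{BMO}\leq 2\,\mathrm{Lip}(\P|_{D_\delta})\,[\m^0]_{BMO}\leq 8\delta^{-2}\ve_0 =: \ve$.

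With $L$ and $\ve$ so defined, the hypothesis \eqref{cond-LLG} is exactly arranged so that the smallness requirement \eqref{hyp1} of Theorem~\ref{thm:cauchy} holds; applying that theorem produces a unique $u\in X(\R^N\times\R^+;\C)$ solving the integral equation with $[u]_X\leq K(\rho+\ve)$ and $\sup_t\norm{u}_{L^\infty}\leq K(\rho+L)$. I would then set $\m = \P^{-1}(u)$, given explicitly by \eqref{inverse-P}. Because $\P^{-1}$ conjugates \eqref{DNLS} to \eqref{LLG} and $u$ is smooth with $\abs{u}$ uniformly bounded (so $m_3>-1$ everywhere), $\m$ is a smooth $\S^2$-valued solution of \eqref{LLG}. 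The conformality of $\P^{-1}$, namely $\abs{\grad\m} = \tfrac{2}{1+\abs{u}^2}\abs{\grad u}\leq 2\abs{\grad u}$, gives $[\m]_X\leq 2[u]_X\leq 4K(\rho+\ve)$, the second bound in \eqref{cond-small-m}; the lower bound on $m_3$ follows directly from $m_3 = (1-\abs{u}^2)/(1+\abs{u}^2)$ together with $\sup_t\norm{u}_{L^\infty}\leq K(\rho+L)$.

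The qualitative properties are inherited from Theorem~\ref{thm:cauchy} through the global smoothness of $\P^{-1}$ (all its derivatives are bounded on $\C$ since its image is the compact sphere). Smoothness (i) is immediate. For (ii) I would use $\abs{\m(\cdot,t)-\m^0}\leq 2\abs{u(\cdot,t)-u^0}$, so that the weighted $L^1$ convergence of $u$ to $u^0$ transfers verbatim and yields convergence in $S'(\R^N)$. For the continuous dependence (iii), writing $\grad\m-\grad\n = (D\P^{-1}(u)-D\P^{-1}(v))\grad u + D\P^{-1}(v)(\grad u-\grad v)$ and using $\abs{D\P^{-1}}\leq 2$, $\abs{D^2\P^{-1}}\lesssim 1$ together with the smallness of $[u]_X$, one obtains $\norm{\m-\n}_X\lesssim \norm{u-v}_X$; combined with \eqref{cont-CI} and $\norm{u^0-v^0}_{L^\infty}\leq \mathrm{Lip}(\P|_{D_\delta})\norm{\m^0-\n^0}_{L^\infty}\lesssim \delta^{-2}\norm{\m^0-\n^0}_{L^\infty}$, this gives \eqref{cont-CI-1} after tracking constants.

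The delicate point, and the step I expect to be the main obstacle, is uniqueness. Given two solutions $\m,\n$ of \eqref{LLG} satisfying \eqref{cond-small-m} with the same data, their projections $u=\P(\m)$, $v=\P(\n)$ are well defined (since $m_3,n_3>-1$) and solve the integral equation with the same $u^0$, so $u-v = T(g(u))-T(g(v))$. One cannot simply invoke the uniqueness part of Theorem~\ref{thm:cauchy}, because the identity $\abs{\grad u} = \tfrac{1}{1+m_3}\abs{\grad\m}$ only yields $[u]_X\leq \tfrac{1}{2}(1+K^2(\rho+\delta^{-1})^2)[\m]_X$, which is far larger than the threshold $K(\rho+\ve)$ required there. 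Instead I would apply the contraction estimate \eqref{contraction2} directly on this enlarged ball: it gives $\norm{u-v}_X\leq C_1(2[u]_X^2+[u]_X+[v]_X)\norm{u-v}_X$, and the whole purpose of the strong factor $K^4\delta^{-4}$ in \eqref{cond-LLG} is to force $\rho$ (hence $\rho+\ve\lesssim \delta^4 K^{-4}$) small enough that the amplification $1+K^2(\rho+\delta^{-1})^2\sim K^2\delta^{-2}$ is still dominated, making the prefactor strictly less than $1$; this forces $u=v$, hence $\m=\n$. Verifying that the constants in \eqref{cond-LLG} indeed close this loop is the crux of the argument.
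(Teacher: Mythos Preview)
Your proposal is correct and follows essentially the same route as the paper: project via $\P$ to obtain $u^0$ with $\|u^0\|_{L^\infty}\le\delta^{-1}$ and $[u^0]_{BMO}\le 8\delta^{-2}\ve_0$, apply Theorem~\ref{thm:cauchy}, pull back via $\P^{-1}$, and---crucially---handle uniqueness by invoking the raw contraction estimate \eqref{contraction2} on the amplified ball rather than the uniqueness clause of Theorem~\ref{thm:cauchy}, with the factor $K^4\delta^{-4}$ in \eqref{cond-LLG} absorbing the amplification. One small point worth making explicit (the paper does this): once uniqueness is proved, any $\m,\n$ satisfying \eqref{cond-small-m} must coincide with the constructed solutions, so their projections $u,v$ inherit the sharp bound $[u]_X,[v]_X\le K(\rho+\ve)$ from Theorem~\ref{thm:cauchy}, which is what lets you apply \eqref{cont-CI} directly in part~(iii); also, your use of the exact conformal identities $|\nabla\m|=\tfrac{2}{1+|u|^2}|\nabla u|$ and $|\nabla u|=\tfrac{1}{1+m_3}|\nabla\m|$ gives slightly sharper intermediate constants than the paper's component-wise Lemma~\ref{lem-projection}, but the structure of the argument is the same.
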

\begin{remark}\label{rem-set-delta}
 The restriction \eqref{cond-LLG} on the parameters is similar to \eqref{condition-rho}, but we need to include $\delta$.
To better understand the role of $\delta$, we can proceed as before. Indeed, setting for $a,\delta>0$,
$$
 \mathcal S_\delta(a)=\{ (\rho,\ve_0)\in \mathbb{R}^{+}\times \mathbb{R}^{+} : a\delta^{-4}(\rho+8\delta^{-2}\ve_0)^2\leq \rho \},$$
we see that its shape is similar to the one in Figure~\ref{fig-set}. It is simple to verify that
for any $(\rho,\ve_0)\in \mathcal S_\delta(a)$, we have the bounds
\bq\label{cotas-rho}
\rho\leq \frac{\delta^4}{a}\quad\text{and}\quad \ve_0\leq \frac{\delta^6}{32a},
\eq
and the maximum value $\ve_0^*=\frac{\delta^6}{32a}$ is attained at $\rho^*=\frac{\delta^4}{4a}$. Also, the sets are well ordered, i.e.\
 if $\tilde a\geq a>0$, then $\mathcal S_\delta(\tilde a)\subseteq\mathcal S_\delta(a)$.
\end{remark}

We emphasize that  the first condition in \eqref{cond-CI-1} is rather technical.
Indeed, we need the essential range of $\m^0$ to be far from the South Pole in order to use
the stereographic projection. In the case $\alpha=1$, Wang~\cite{wang} proved
the global well-posedness using only the second restriction in \eqref{cond-CI-1}.
It is an open problem to determinate if this condition is necessary in the case $\alpha\in(0,1)$.

The choice of the South Pole is of course arbitrary. By using the invariance of \eqref{LLG} under rotations,
we have the existence of solutions provided that the essential range of the initial condition  $\m^0$ is far from an arbitrary point $\QQ\in \S^2$. Precisely,

\begin{cor}\label{cor:cauchy-LLG}
Let  $\alpha\in(0,1]$,  $\QQ\in \S^2$, $\delta\in (0,2]$,  and  $\ve_0,\rho>0$ such that
\eqref{cond-LLG} holds. Given  $\m^0=(m_1^0,m_2^0,m_3^0)\in L^\infty(\R^N;\S^2)$ satisfying
\bqq
\inf_{\R^N}\abs{\m^0-\QQ}^2\geq 2\delta \quad\textup{ and }\quad[\m^0]_{BMO}\leq \ve_0,
\eqq
there exists a unique smooth solution $\m\in X(\R^N\times \R^+;\S^2)$ of \eqref{LLG}
with initial condition $\m^0$ such that
 \bq\label{cond-cor}
 \inf_{\substack{x\in \R^N\\ t>0}} \abs{\m(x,t)-\QQ}^2\geq \frac{4}{1+K^2(\rho+\delta^{-1})^2}
 \quad \textup{ and }\quad [\m]_{X}\leq 4K(\rho+8\delta^{-2}\ve_0).
 \eq
 \end{cor}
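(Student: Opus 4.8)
The plan is to reduce Corollary~\ref{cor:cauchy-LLG} to Theorem~\ref{thm:cauchy-LLG} by exploiting the rotational invariance of \eqref{LLG}. The key observation is that the cross-product structure of \eqref{LLG} is preserved under the action of $SO(3)$: if $\m$ solves \eqref{LLG} and $\boR\in SO(3)$, then $\boR\m$ also solves \eqref{LLG}, because for any $\boR\in SO(3)$ and any vectors $\boldsymbol{a},\boldsymbol{b}\in\R^3$ one has $\boR(\boldsymbol{a}\times\boldsymbol{b})=(\boR\boldsymbol{a})\times(\boR\boldsymbol{b})$, and $\Delta$ commutes with the (constant) rotation $\boR$. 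Thus both the $\beta\,\m\times\Delta\m$ and the $\alpha\,\m\times(\m\times\Delta\m)$ terms transform covariantly.

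First I would choose a rotation $\boR\in SO(3)$ mapping the given point $\QQ$ to the South Pole, i.e.\ $\boR\QQ=(0,0,-1)$. Setting $\widetilde{\m}^0:=\boR\m^0$, I would verify that $\widetilde{\m}^0$ satisfies the hypotheses of Theorem~\ref{thm:cauchy-LLG}. Indeed, the BMO semi-norm is rotation-invariant since $\boR$ is a constant orthogonal matrix, so $[\widetilde{\m}^0]_{BMO}=[\m^0]_{BMO}\leq\ve_0$. For the pole condition, I would use that $|\boldsymbol{a}-(0,0,-1)|^2=2(1+a_3)$ for any $\boldsymbol{a}\in\S^2$, so that the third coordinate $\widetilde{m}^0_3$ of $\widetilde{\m}^0$ satisfies
\begin{equation*}
\inf_{\R^N}\widetilde{m}^0_3=-1+\tfrac12\inf_{\R^N}|\widetilde{\m}^0-(0,0,-1)|^2
=-1+\tfrac12\inf_{\R^N}|\boR\m^0-\boR\QQ|^2=-1+\tfrac12\inf_{\R^N}|\m^0-\QQ|^2\geq-1+\delta,
\end{equation*}
using $|\boR\m^0-\boR\QQ|=|\m^0-\QQ|$. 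Hence the first condition in \eqref{cond-CI-1} holds with the same $\delta$, and \eqref{cond-LLG} is assumed directly.

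Next I would apply Theorem~\ref{thm:cauchy-LLG} to $\widetilde{\m}^0$ to obtain a unique smooth solution $\widetilde{\m}\in X(\R^N\times\R^+;\S^2)$ with initial data $\widetilde{\m}^0$ satisfying the bounds in \eqref{cond-small-m}. I would then set $\m:=\boR^{-1}\widetilde{\m}$, which by rotational invariance is a smooth solution of \eqref{LLG} with initial condition $\boR^{-1}\widetilde{\m}^0=\m^0$. Since the $X$-semi-norm only involves $\grad$ and $L^\infty$/$L^2$ norms of the gradient, and $\boR^{-1}$ is a constant isometry, one has $[\m]_X=[\widetilde{\m}]_X\leq 4K(\rho+8\delta^{-2}\ve_0)$, giving the second bound in \eqref{cond-cor}. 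For the first bound, reversing the coordinate computation above, $|\m(x,t)-\QQ|^2=|\widetilde{\m}(x,t)-(0,0,-1)|^2=2(1+\widetilde{m}_3(x,t))$, so the lower bound $\widetilde{m}_3\geq-1+\tfrac{2}{1+K^2(\rho+\delta^{-1})^2}$ from \eqref{cond-small-m} translates into exactly the first inequality in \eqref{cond-cor}.

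Finally, for uniqueness I would argue that any smooth solution $\m$ of \eqref{LLG} with data $\m^0$ satisfying \eqref{cond-cor} gives, via $\widetilde{\m}=\boR\m$, a solution of \eqref{LLG} with data $\widetilde{\m}^0$ satisfying \eqref{cond-small-m}; the uniqueness part of Theorem~\ref{thm:cauchy-LLG} then forces $\widetilde{\m}$ to coincide with the solution constructed above, whence $\m$ is unique. I do not anticipate a genuine obstacle here: the entire argument is a transport of hypotheses and conclusions through the isometry $\boR$. The only point requiring mild care is the bookkeeping of the constants and the translation of the pole/range conditions via the identity $|\boldsymbol{a}-\boldsymbol{b}|^2=2-2\,\boldsymbol{a}\cdot\boldsymbol{b}$ on $\S^2$, together with checking that each quantity appearing in the statement (the BMO semi-norm, the $X$-norm, the infimum conditions) is genuinely invariant under the constant rotation $\boR$.
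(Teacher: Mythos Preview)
Your proposal is correct and follows essentially the same approach as the paper's own proof: choose a rotation $\boR\in SO(3)$ sending $\QQ$ to the South Pole, verify that the rotated initial datum satisfies the hypotheses of Theorem~\ref{thm:cauchy-LLG} via the identity $|\boldsymbol a-(0,0,-1)|^2=2(1+a_3)$ on $\S^2$, apply that theorem, and then rotate back, using rotational invariance of \eqref{LLG}, of the BMO semi-norm, and of the $X$-norm for both existence and uniqueness. If anything, your write-up is slightly more explicit than the paper's about why each of these quantities is rotation-invariant.
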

For the sake of clarity, before proving Theorem \ref{thm:cauchy-LLG},
we provide a precise meaning of what we refer to as a weak and smooth
global solution of the \eqref{LLG} equation. The definition below is motivated by  the following vector identities for a smooth function
$\m$  with $\abs{\m}=1$:
\begin{gather*}
 \m\times \Delta \m=\div(\m\times \grad \m),\\
 -\m\times (\m\times \Delta \m)=\Delta \m+\abs{\grad \m}^2\m.
\end{gather*}

\begin{definition}
Let $T\in (0,\infty]$ and $\m^0\in L^\infty(\R^N;\S^2)$.
We say that  $$\m\in L^\infty_{\loc}((0,T),H^1_{\loc}(\R^N;\S^2))$$ is a weak solution of \eqref{LLG} in $(0,T)$
with initial condition $\m^0$  if
\bqq
-\langle \m,\partial_t \varphi\rangle=\beta \langle \m\times \grad \m,\grad \varphi\rangle
-\alpha \langle \grad \m,\grad \varphi\rangle +\alpha \langle \abs{\grad \m}^2\m, \varphi\rangle,
\eqq
and
\bq\label{def-CI}
\norm{ (\m(t)-\m^0)\vp}_{L^1}\to 0, \quad \textup{ as }t\to 0^+,
\qquad \text{for all }
\varphi\in C_0^\infty(\R^N\times (0,T)).
\eq
If $T=\infty$,  and in addition  $\m\in C^\infty(\R^N\times \R^+)$,
we say that $\m$ is a smooth global solution of \eqref{LLG} in $\R^N\times \R^+$
with initial condition $\m^0$. Here $\langle \cdot, \cdot\rangle$ stands for
$$
\langle \f_1, \f_2\rangle=\int_{0}^\infty\int_{\R^N} \f_1\cdot\f_2 \, dx\,dt.
$$
\end{definition}
With this definition, we see the following: Assume that $\m$ is  a smooth global solution of \eqref{LLG} with initial condition $\m^0$ and consider its stereographic projection $\P(\m)$. If $\P(\m)$ and $\P(\m^0)$ are well-defined, 
then $\P(\m)\in C^{\infty}(\R^N\times \R^{+}; \C)$ satisfies \eqref{DNLS} pointwise, and
$$
  \lim_{t\rightarrow 0^{+}} \P(\m)=\P(\m^0) \qquad \text{in } S'(\R^N).
$$
Therefore, if in addition  $\P(\m)\in X(\R^N\times \R;\C)$, then $\P(\m)$ is a smooth
global solution of \eqref{DNLS} with initial condition $\P(\m^0)$. Reciprocally,
suppose that  $u\in  X(\R^N\times \R^+;\C)\cap C^\infty(\R^N\times \R^+)$
is a solution of \eqref{duhamel} with initial condition $u^0\in L^\infty(\R^N)$ such that
\eqref{conver-en-0} holds. If  $\P^{-1}(u)$  and $\P^{-1}(u^0)$ are  in  appropriate spaces,
then $\P^{-1}(u)$ is a global smooth solution of \eqref{LLG} with initial condition $\P^{-1}(u^0)$.
The above (formal) argument allows us to obtain Theorem~\ref{thm:cauchy-LLG}
from Theorem~\ref{thm:cauchy} once we have established good estimates for the mappings
$\P$ and $\P^{-1}$. In this context, we have the following
\begin{lemma}\label{lem-projection} Let $u,v\in C^1(\R^N;\C)$,
$\m=(m_1,m_2,m_3),\n=(n_1,n_2,n_3)\in C^1(\R^N;\S^2)$.
\begin{enumerate}
\item[a)] Assume that  $\inf\limits_{\R^N} m_3\geq -1+\delta$ and
$\inf\limits_{\R^N} n_3\geq -1+\delta$
for some constant $\delta\in (0,2]$. If  $u=\P(\m)$ and  $v=\P(\n)$, then
\begin{align}
\label{des-u-v}\abs{u(x)-v(x)} &\leq \frac4{\delta^2}\abs{\m(x)-\n(x)},\\
\label{des-BMO}[u]_{BMO}&\leq \frac8{\delta^2}[\m]_{BMO},\\
\label{des-grad-u} \abs{\grad u(x)}&\leq \frac{4}{\delta^2}\abs{\grad \m(x)},
\end{align}
for all $x\in \R^N$.
\item[b)] Assume that $\norm{u}_{L^\infty}\leq M$, $\norm{v}_{L^\infty}\leq M$, for some constant $M\geq 0$. If $\m=\P^{-1}(u)$ and $\n=\P^{-1}(v)$ , then
\begin{align}
\label{des-m3}\inf_{\R^N} m_3&\geq -1+\frac{2}{1+M^2},\\
\label{des-m-n} \abs{ \m(x)-  \n(x)} &\leq 3\abs{u(x)-v(x)},\\
\label{des-grad} \abs{\grad{\m(x)}}&\leq 4\abs{\grad u(x)},\\
\label{des-grad-m-n} \abs{\grad{\m(x)- \grad \n(x)}}&\leq
4\abs{\grad u(x)-\grad v(x)}+12\abs{u(x)-v(x)}
(\abs{\grad u(x)}+\abs{\grad v(x)}).
\end{align}
\end{enumerate}
\end{lemma}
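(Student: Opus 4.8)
The plan is to prove Lemma~\ref{lem-projection} by direct computation using the explicit formulas for $\P$ and $\P^{-1}$. All eight estimates are pointwise (except \eqref{des-BMO}, which follows from the pointwise bound \eqref{des-u-v}), so the work reduces to elementary algebraic manipulations of the stereographic projection formulas, with the constant $\delta$ controlling the distance from the South Pole in part a) and the constant $M$ controlling the $L^\infty$-bound in part b).

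\textbf{Part a).} The key observation is that the hypothesis $\inf_{\R^N} m_3\geq -1+\delta$ gives $1+m_3\geq\delta$, and since $\abs{u}^2=\P(\m)\overline{\P(\m)}=\frac{m_1^2+m_2^2}{(1+m_3)^2}=\frac{1-m_3^2}{(1+m_3)^2}=\frac{1-m_3}{1+m_3}$, one obtains $1+\abs{u}^2=\frac{2}{1+m_3}\leq\frac{2}{\delta}$. For \eqref{des-u-v}, I would write the difference $u-v=\P(\m)-\P(\n)$ over a common denominator and estimate the numerator by $\abs{\m-\n}$, using $\abs{m_1+im_2}\leq1$ and the denominator bounds $(1+m_3),(1+n_3)\geq\delta$; collecting the resulting factors of $\delta^{-1}$ yields the constant $4/\delta^2$. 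Estimate \eqref{des-BMO} follows because the stereographic projection is, on the relevant region, Lipschitz with constant controlled by $\delta^{-2}$: indeed \eqref{des-u-v} shows $\abs{u(x)-u(y)}\lesssim\delta^{-2}\abs{\m(x)-\m(y)}$ pointwise, and a standard argument bounds the BMO semi-norm of a composition $F\circ\m$ by the Lipschitz constant of $F$ times $[\m]_{BMO}$ (averaging over balls and using $\abs{u(y)-u_{x,r}}\leq\fint\abs{u(y)-u(z)}\,dz$). For \eqref{des-grad-u} I would differentiate the formula for $\P$ directly; the chain rule produces $\grad u$ as a linear combination of $\grad m_1,\grad m_2,\grad m_3$ with coefficients bounded by powers of $(1+m_3)^{-1}\leq\delta^{-1}$, and since $\abs{\grad\m}$ controls each $\abs{\grad m_i}$, one again gets the factor $4/\delta^2$.

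\textbf{Part b).} Here one uses the inverse formulas \eqref{inverse-P} together with $\abs{u}\leq M$, so $1+\abs{u}^2\leq1+M^2$. Estimate \eqref{des-m3} is immediate from $m_3=\frac{1-\abs{u}^2}{1+\abs{u}^2}=-1+\frac{2}{1+\abs{u}^2}\geq-1+\frac{2}{1+M^2}$. For \eqref{des-m-n} I would compute $\m-\n$ component-wise from \eqref{inverse-P}, place each difference over the common denominator $(1+\abs{u}^2)(1+\abs{v}^2)$, and bound the numerators linearly in $\abs{u-v}$; the factor $3$ emerges after using $\frac{a}{1+a^2}\leq\frac12$ and related elementary inequalities. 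Estimates \eqref{des-grad} and \eqref{des-grad-m-n} require differentiating \eqref{inverse-P}: the gradient $\grad\m$ is a rational expression in $u,\bar u,\grad u,\grad\bar u$, and direct bookkeeping gives \eqref{des-grad}, while \eqref{des-grad-m-n} is a ``difference of gradients'' estimate obtained by adding and subtracting mixed terms so that the difference splits into a piece controlled by $\abs{\grad u-\grad v}$ and a piece controlled by $\abs{u-v}$ (with the latter carrying the extra factor $\abs{\grad u}+\abs{\grad v}$ coming from the product rule).

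\textbf{The main obstacle} I anticipate is purely organizational rather than conceptual: \eqref{des-grad-m-n} is the one genuinely two-variable estimate, and obtaining clean constants ($4$ and $12$) requires a careful telescoping decomposition of $\grad\m-\grad\n$ so that each term is manifestly controlled by either $\abs{\grad u-\grad v}$ or $\abs{u-v}(\abs{\grad u}+\abs{\grad v})$, keeping track of how the $(1+\abs{u}^2)^{-1}$ and $(1+\abs{v}^2)^{-1}$ factors are estimated by $1$. Everything else is a routine application of the elementary inequalities in \eqref{ineq0} and the denominator bounds; no compactness, regularity, or functional-analytic input is needed, since the lemma is entirely a statement about the Lipschitz behavior of the (inverse) stereographic projection on the region where $m_3$ stays away from $-1$ or, dually, where $\abs{u}$ stays bounded.
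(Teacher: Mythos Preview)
Your proposal is correct and follows essentially the same approach as the paper: direct computation from the explicit formulas for $\P$ and $\P^{-1}$, using the denominator bound $1+m_3\geq\delta$ in part a), the double-average characterization of BMO for \eqref{des-BMO}, and the elementary rational inequalities (such as $\frac{a}{1+a^2}\leq\frac12$ and $\frac{1+ab}{(1+a^2)(1+b^2)}\leq 1$) in part b). The paper's treatment of \eqref{des-grad-m-n} is exactly the telescoping decomposition you describe, first for $\check m-\check n:=(m_1+im_2)-(n_1+in_2)$ and then for $m_3-n_3$, and the constants $4$ and $12$ arise precisely from combining the $\check m$ and $m_3$ contributions via $\sqrt{a^2+b^2}\leq a+b$.
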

 \begin{proof}
 In the proof we will use the notation $\check m:=m_1+im_2$.
To establish \eqref{des-u-v}, we write
\bqq
u(x)-v(x)=\frac{\check m (x)-\check n(x)}{1+m_3(x)}+\frac{\check n(x)(n_3(x)-m_3(x))}{(1+m_3(x))(1+n_3(x))}.
\eqq
Hence, since $\abs{\check n}\leq 1$, $m_3(x)+1\geq \delta$ and $n_3(x)+1\geq \delta$, $\forall x\in\mathbb{R}^N$,
\bqq
\abs{u(x)-v(x)}\leq \frac{\abs{\check m (x)-\check n(x)}}{\delta}+
\frac{\abs{n_3(x)-m_3(x)}}{\delta^2}.
\eqq
Using that
\bq\label{des-check}
\abs{\check m-\check n}\leq \abs{\m-\n}\eq
and that
$$\max\left\{ \frac1{a},\frac1{a^2}\right\}\leq \frac2{a^2}, \textup{ for all }a\in (0,2],$$
we obtain \eqref{des-u-v}. The same argument also shows that
\bq\label{u-y}
\abs{u(y)-u(z)} \leq \frac4{\delta^2}\abs{\m(y)-\m(z)},\quad \textup{ for all }y,z\in \R^N.
\eq
To verify \eqref{des-BMO}, we recall the following inequalities in BMO (see \cite{brezis-nir}):
\bq\label{BMO-brezis}
[f]_{BMO}\leq \sup_{x\in\R^N}\fint_{B_r(x)}\fint_{B_r(x)}\abs{f(y)-f(z)}\,dy\,dz\leq 2[f]_{BMO}.
\eq
Estimate \eqref{des-BMO} is an immediate consequence of this inequality and \eqref{u-y}.
To prove \eqref{des-grad-u} it is enough to remark that
$$
\abs{\grad u}\leq \frac{2}{\delta^2}(\abs{\grad m_1}+\abs{\grad m_2}+\abs{\grad m_3})\leq \frac{4}{\delta^2}\abs{\grad \m}.
$$

We turn into {\it (b)}.
Using the explicit formula for $\P^{-1}$ in \eqref{inverse-P}, we can write
$$m_3=-1+\frac{2}{1+\abs{u}^2}.$$
Since $\norm{u}_{L^\infty}\leq M$, we obtain \eqref{des-m3}.

To show \eqref{des-m-n}, we compute
\begin{align}
\label{check-m-n}\check m-\check n&=
\frac{2 u}{1+\abs{u}^2}-\frac{2 v}{1+\abs{v}^2}=\frac{2(u-v)+2uv(\bar v-\bar u)}{(1+\abs{u}^2)(1+\abs{v}^2)},\\
\label{check-m3-n3}m_3- n_3&=\frac{1-\abs{u}^2}{1+\abs{u}^2}-\frac{1-\abs{v}^2}{1+\abs{v}^2}=
\frac{2(\abs{v}^2-\abs{u}^2)}{(1+\abs{u}^2)(1+\abs{v}^2)}.
\end{align}
Using the inequalities
\begin{equation}
\label{ineq01}
 \frac{a}{1+a^2}\leq \frac{1}{2}, \ \
 \frac{1+ab}{(1+a^2)(1+b^2)}\leq 1,
 \ \
 \text{ and }\ \
 \frac{a+b}{(1+a^2)(1+b^2)}\leq 1,\ \
 \text{ for all } a,b\geq 0,
\end{equation}
from \eqref{check-m-n} and \eqref{check-m3-n3} we deduce that
\begin{align}\label{diff-m}
\abs{\check m-\check n}\leq 2\abs{u-v}\quad \text{ and } \quad \abs{m_3- n_3}&\leq 2\abs{u-v}.
\end{align}
Hence
\bqq
\abs{\m-\n}=\sqrt{\abs{\check m-\check n}^2+\abs{m_3- n_3}^2}\leq \sqrt{8}\abs{u-v}\leq 3\abs{u-v}.
\eqq
To estimate the gradient, we compute
\bq
\grad \check m=\frac{2\grad u}{1+\abs{u}^2}-\frac{4u \Re(\bar u \grad u)}{(1+\abs{u}^2)^2},
\eq
from which it follows that
$$
 \abs{\grad \check m}\leq \abs{\grad u}\left(\frac{2}{1+|u|^2}+\frac{4\abs{u}^2}{(1+\abs{u}^2)^2}\right)\leq 3\abs{\grad u},
$$
since $\frac{4a}{(1+a)^2}\leq 1,$ for all $a\geq 0$.
For $m_3$, we have
$$
\grad m_3=-\frac{2 \Re(\bar u \grad u)}{1+\abs{u}^2}-\frac{2 \Re(\bar u \grad u)(1-\abs{u}^2)}{(1+\abs{u}^2)^2}=-\frac{4\Re(\bar u \grad u)}{(1+\abs{u}^2)^2},
$$
and therefore  $\abs{\grad m_3}\leq 2\abs{\grad u}$, since
\bq\label{ineq2}
\frac{a}{(1+a^2)^2}\leq \frac12, \quad \text{for all } a\geq 0.
\eq
Hence
$$
\abs{\grad \m}= \sqrt{\abs{\grad m_1}^2+\abs{\grad m_2}^2+\abs{\grad m_3}^2}\leq \sqrt{13}\abs{\grad u}\leq 4 \abs{\grad u},
$$
which gives \eqref{des-grad}.

In order to prove \eqref{des-grad-m-n}, we start differentiating \eqref{check-m-n}
\bqq
\begin{aligned}
\grad\check  m-\grad\check n=&
2\frac{\grad (u- v)+\grad(uv)(\bar v-\bar u)+uv\grad(\bar v- \bar u)}{(1+\abs{u}^2)(1+\abs{v}^2)}\\
&-4\frac{((u-v)+uv(\bar v-\bar u))(\Re(\bar u\grad u)(1+\abs{v}^2)+\Re(\bar v\grad v)(1+\abs{u}^2))}{(1+\abs{u}^2)^2(1+\abs{v}^2)^2},
\end{aligned}
\eqq
Hence, setting $R=\max\{ \abs{\grad u(x)}, \abs{\grad v(x)} \}$,
\bqq
\begin{aligned}
\abs{\grad\check  m-\grad\check n}\leq &
2\abs{\grad u-\grad v}\left(\frac{1+\abs{u}\abs{v}}{(1+\abs{u}^2)(1+\abs{v}^2)} \right)
+2R\abs{u-v}
\left(\frac{\abs{u}+\abs{v}}{(1+\abs{u}^2)(1+\abs{v}^2)} \right)\\
&+4R\abs{u-v}\left(
\frac{\abs{u}(1+\abs{u}\abs{v})}{(1+\abs{u}^2)^2(1+\abs{v}^2)}+
\frac{\abs{v}(1+\abs{u}\abs{v})}{(1+\abs{u}^2)(1+\abs{v}^2)^2}
\right).
\end{aligned}
\eqq
Using again \eqref{ineq01}, we get
\bqq
\frac{\abs{u}(1+\abs{u}\abs{v})}{(1+\abs{u}^2)^2(1+\abs{v}^2)}\leq
 \frac{\abs{u}}{(1+\abs{u}^2)}
\leq \frac12.
\eqq
By symmetry, the same estimate holds interchanging   $u$ by $v$. Therefore,
invoking again \eqref{ineq01}, we obtain
\bq\label{des-grad-1}
\abs{\grad\check  m-\grad\check n}\leq 2\abs{\grad u-\grad v}+6R\abs{u-v}.
\eq
Similarly, writing $\abs{u}^2-\abs{v}^2=(u-v)\bar u+(\bar u-\bar v)v$,  from \eqref{check-m3-n3}
we have
\bq\label{des-grad-2}
\abs{\grad m_3-\grad n_3}\leq 2\abs{\grad u-\grad v}+6R\abs{u-v}.
\eq
Therefore, since
\bqq
\sqrt{a^2+b^2}\leq a+b, \qquad \forall \, a, b\geq 0,
\eqq
  inequalities \eqref{des-grad-1} and \eqref{des-grad-2} yield \eqref{des-grad-m-n}.
 \end{proof}
Now we have all the elements to establish Theorem~\ref{thm:cauchy-LLG}.

\begin{proof}[Proof of Theorem~\ref{thm:cauchy-LLG}]
We continue to use the constants $C$ and $K$ defined in Theorem~\ref{thm:cauchy}.
We recall that they are given by $C=C_1C_2^2$ and $K=C_2$, where $C_1\geq 1$ and $C_2\geq 1$
are the constants in Lemmas~\ref{lemmaT} and \ref{lemma-bola}, respectively.
In addition, w.l.o.g.\ we assume that
\bq\label{hyp-K}
K=C_2\geq 4,
\eq
in order to simplify our computations.

First we notice that by Remark~\ref{rem-set-delta}, any $\rho$ and $\ve_0$
fulfilling the condition \eqref{cond-LLG}, also satisfy
\bq\label{proof-cond}
8C(\rho+8\delta^{-2}\ve_0)^2\leq \rho,
\eq
since $\delta^4/K^4\leq 1$ (notice that $K\geq 4$ and $\delta\in (0,2]$).

Let $\m^0$ as in the statement of the theorem and set $u^0=\P(\m^0)$.
Using \eqref{des-BMO} in Lemma~\ref{lem-projection}, we have
\bqq
\norm{u^0}_{L^\infty} \leq \Big\Vert{\frac{1}{1+m^0_3}}\Big\Vert_{L^\infty}\leq \frac{1}{\delta} \quad \text{ and } \quad [u^0]_{BMO}\leq \frac{8\ve_0}{\delta^2}.
\eqq
Therefore, bearing in mind \eqref{proof-cond}, we can  apply Theorem~\ref{thm:cauchy} with
$$L:=\frac1{\delta}
\quad \textup{ and } \quad\ve:=8\delta^{-2}\ve_0,
$$
to obtain a smooth solution $u\in X(\R^N\times \R^+;\C)$ to \eqref{duhamel} with initial condition $u^0$.
In particular $u$ satisfies
\bq\label{cota-u}
\sup_{t>0}\norm{u}_{L^\infty}\leq K(\rho+\delta^{-1}) \quad \text{ and }
\quad [u]_{X}\leq K(\rho+8\delta^{-2}\ve_0).
\eq
Defining $\m=\P^{-1}(u)$, we infer that $\m$ is a smooth solution to \eqref{LLG} and, using the fact that  $\| (u(\cdot, t)-u^{0})\varphi  \|_{L^1}\rightarrow 0$ (see \eqref{conver-en-0}) and \eqref{des-m-n},
$$
 |\m(\cdot, t)-\m^{0}|\longrightarrow 0 \qquad \text{in } \mathcal{S}'(\mathbb{R}^N), \text{ as } t\rightarrow 0^{+}.
$$
Notice also that applying Lemma~\ref{lem-projection} we obtain
 \bqq
 \inf_{\substack{x\in \R^N\\ t>0}} m_3(x,t)\geq -1+\frac{2}{1+K^2(\rho+\delta^{-1})^2}
 \quad \textup{ and }\quad [\m]_{X}\leq 4 [u]_{X}\leq  4K(\rho+8\delta^{-2}\ve_0),
 \eqq
which yields \eqref{cond-small-m}.

Let us now prove the uniqueness. Let $\n$ be a another smooth solution of \eqref{LLG} with initial condition $u^0$
satisfying
\bq\label{n-small}
 \inf_{\substack{x\in \R^N\\ t>0}} n_3(x,t)\geq -1+\frac{2}{1+K^2(\rho+\delta^{-1})^2}
  \quad \textup{ and }\quad
  [\n]_{X}\leq  4K(\rho+8\delta^{-2}\ve_0),
\eq
and  let $v=\P(\n)$ be its stereographic projection. Then by \eqref{des-grad-u},
\bq\label{semi-v}
[v]_X\leq \Big(1+K^2(\rho+{\delta^{-1}})^2\Big)^2[\n]_X.
\eq
We continue to control the upper bounds for $[v]_X$ and $[u]_X$ in terms of $\delta$ and the constants $C_1\geq 1$ and $C_2\geq 4$. Notice that since $\rho$ and $\ve_0$ satisfy \eqref{cond-LLG}, from \eqref{cotas-rho} with $a=8K^4C$, it follows that
$$
 \rho\leq \frac{\delta^4}{8K^4C}\qquad \text{and}\qquad \ve_0\leq \frac{\delta^6}{2^8 K^4 C},
$$
or equivalently (recall that $K=C_2$ and $C=C_1C_2^2$)
\begin{equation}
\label{cotas-rho-ve}
 \rho\leq \frac{\delta^4}{8C_1C^6_2}\qquad \text{and}\qquad \frac{8\ve_0}{\delta^2}\leq \frac{\delta^4}{32 C_1 C_2^6}.
\end{equation}
Hence
\begin{equation}
\label{cota-rho-mas-ve}
  K(\rho+8\delta^{-2}\ve_0)\leq \frac{5\delta^4}{32 C_1 C_2^5}.
\end{equation}
Also, using \eqref{cotas-rho-ve}, we have
\begin{equation}
\label{cota-v-1}
\begin{aligned}
   1+K^2(\rho+\delta^{-1})^2=&
    1+ \frac{C_2^2}{\delta^2}(\rho\delta +1)^2=
   \frac{C^2_2}{\delta^2}\left(  \frac{\delta^2}{C^2_2} +(\rho\delta +1)^2\right) \\
   \leq& \frac{C_2^2}{\delta^2} \left( \frac{\delta^2}{C_2^2} +\left( \frac{\delta^5}{8C_1C_2^6}+1 \right)^2\right)
   \leq
   2\frac{C_2^2}{\delta^2},
 \end{aligned}
 \end{equation}
since $C_1\geq 1$, $C_2\geq 4$ and $\delta\leq 2$.

From the bounds in \eqref{cota-rho-mas-ve} and \eqref{cota-v-1}, combined with
\eqref{cota-u}, \eqref{n-small} and \eqref{semi-v}, we obtain
$$
 [u]_X\leq K(\rho+8\delta^{-2}\ve_0)\leq \frac{5\delta^4}{32 C_1 C_2^5}\leq \frac{5}{2^{11} C_1}
$$
and
$$
 [v]_{X}\leq (1+K^2(\rho+\delta^{-1})^2)^2 [\n]_{X}\leq
 (1+K^2(\rho+\delta^{-1})^2)^2 4K(\rho+8\delta^{-2}\ve_0)\leq
 \left( 2\frac{C_2^2}{\delta^2} \right)^2 \frac{20 \delta^4}{32C_1C_2^5}\leq \frac{5}{8C_1},
$$
since $\delta\leq 2$ and $C_2\geq 4$. Finally, since  $u$ and $v$ are solutions to \eqref{duhamel} with initial condition $u^0$,
\eqref{contraction2} and the above inequalities for $[u]_X$ and $[v]_X$ yield
 \begin{align*}
 \norm{ u-v}_{X}\leq& C_1(2[u]^2_X+[u]_X+[v]_X) \norm{u-v}_{X}
   \\
 \leq& C_1 \left( 2\left( \frac{5}{2^{11} C_1}\right)^2+\frac{5}{2^{11} C_1}+\frac{5}{8C_1} \right)  \norm{u-v}_{X},
 \end{align*}
which implies that  $u=v$, bearing in mind that the constant on the r.h.s. of the above inequality is strictly less that one. This completes the proof of the uniqueness.

It remains to establish \eqref{cont-CI-1}. Let $\m$ and $\n$ two smooth solutions of
\eqref{LLG} satisfying \eqref{cond-small-m}. As a consequence of the uniqueness,
we see that $\m$ and $\n$ are the inverse stereographic projection of some functions $u$
and $v$ that are solutions of \eqref{duhamel} with initial condition $u^0=\P(\m^0)$
and $v^0=\P(\n^0)$, respectively. In particular, $u$ and $v$ satisfy the estimates in \eqref{cota-u}.
Using also \eqref{des-m-n} and \eqref{des-grad-m-n}, we deduce that
\bqq
\begin{aligned}
\norm{\m-\n}_{X} &\leq
3\sup_{t>0}\norm{u-v}_{L^\infty}+4[u-v]_X+12\sup_{t>0}\norm{u-v}_{L^\infty}([u]_X+[v]_X])\\
&\leq 4 \norm{u-v}_X+24C_2(\rho+8\delta^{-2}\ve_0)\norm{u-v}_X,  \\
&\leq 5 \norm{u-v}_X,
\end{aligned}
\eqq
where we have used \eqref{cota-rho-mas-ve} in obtaining the last inequality.
Finally, using also \eqref{cond-CI-1} and \eqref{des-u-v}, and
applying \eqref{cont-CI} in Theorem~\ref{thm:cauchy},
\bqq
\begin{aligned}
\norm{\m-\n}_{X} &\leq 30 K \norm{u^0-v^0}_{L^\infty}\\
&\leq 120 K\delta^{-2}  \norm{\m^0-\n^0}_{L^\infty},
\end{aligned}
\eqq
which yields \eqref{cont-CI-1}.
\end{proof}
\begin{proof}[Proof of Corollary~\ref{cor:cauchy-LLG}]
Let $\boR\in SO(3)$ such that $\boR \QQ=(0,0,-1)$, i.e.\ $\boR$ is the rotation that maps $\QQ$ to the South Pole.
Let us set $\m^0_{\boR}=\boR \m^0$. Then
$$
 \abs{\m^0-\QQ}^2=\abs{\boR(\m^0-\QQ)}^2=
  \abs{\m^0_\boR-(0,0,-1)}^2=2(1+m_{3,\boR}^0).
$$
Hence,
\bqq
\inf_{x\in \R^N} m_{3,\boR}^0\geq -1+\delta
\eqq
and
\bqq
[\m^0_\boR]_{BMO}=[\m^0]_{BMO}\leq \ve_0.
\eqq
Therefore, Theorem~\ref{thm:cauchy-LLG} provides the existence of a unique smooth solution $\m_\boR\in X(\R^N\times \R^+;\S^2)$
of \eqref{LLG} satisfying \eqref{cond-small-m}. Using the invariance of \eqref{LLG}
and setting $\m=\boR^{-1} \m_\boR$ we obtain the existence of the desired solution.
To establish the uniqueness, it suffices to observe that if $\n$ is another smooth solution  of \eqref{LLG}
satisfying \eqref{cond-cor}, then $\n_\boR:=\boR \n$ is a solution of \eqref{LLG} with initial condition
$\m^0_\boR $ and it fulfills  \eqref{cond-small-m}. Therefore, from the uniqueness of solution in Theorem~\ref{thm:cauchy-LLG}, it follows that $\m_\boR=\n_\boR$  and then $\m=\n$.
\end{proof}
\begin{proof}[Proof of Theorem~\ref{thm-cauchy-intro}]
In Theorem~\ref{thm:cauchy-LLG} and Corollary~\ref{cor:cauchy-LLG},
the constants are given by $C=C_1C_2^2$ and $K=C_2$.
As discussed in Remark~\ref{rem-set-delta}, the value
$$\rho^*=\frac{\delta^4}{32C_1C_2^2}$$
maximizes the range for $\ve_0$ in \eqref{condition-rho} and this inequality
is satisfied for any $\ve_0>0$ such that
$$\ve_0\leq \frac{\delta^6}{256C_1C_2^2}.$$
Taking
$$M_1=\frac1{256C_1C_2^2}, \quad M_2=C_2\quad\text{ and }\quad M_3=\frac{1}{32C_1C_2^2},$$
so that $\rho^*=M_3\delta^4$,
the conclusion follows from Theorem~\ref{thm:cauchy-LLG} and Corollary~\ref{cor:cauchy-LLG}.
\end{proof}

\begin{remark} We finally remark that is possible to state  local (in time) versions of Theorems~\ref{thm:cauchy}
and~\ref{thm:cauchy-LLG} as it was done in \cite{koch-tataru,koch-lamm,wang}. In our context, the
local well-posedness would concern solutions  with initial condition $\m^0\in \overline{VMO}(\R^N)$,
i.e.\ such that
\bq\label{vmo}
\lim_{r\to 0^+} \sup_{x\in\R^N}\fint_{B_r(x)}\abs{\m^0(y)-\m^0_{x,r}}\,dy=0.
\eq
Moreover, some uniqueness results  have been established for solutions with this kind of initial data
 by Miura~\cite{miura} for the Navier--Stokes equation, and adapted by Lin~\cite{lin} to  \eqref{HFHM}.
It is also possible to do this for \eqref{LLG}, for $\alpha>0$. We do not pursuit here these types of results  because they
do not apply to the self-similar solutions $\m_{c,\alpha}$. This is due to the facts that the function
$\m^0_{\A^\pm}$ does not belong to $\overline{VMO}(\R)$ and that
\bqq
\lim_{T\to 0^+}\sup_{0<t<T}\sqrt{t}\norm{\partial_x\m_{c,\alpha}}_{L^\infty}\neq 0.
\eqq
\end{remark}
%

%
\section{Applications}
\label{sec-self-similar}

%
\subsection{Existence of self-similar solutions in $\mathbb{R}^N$}\label{sub-self}
%
The LLG equation is invariant under the scaling $(x,t)\to (\lambda x,\lambda^2 t)$, for $\lambda>0$, that is
if $\m$ satisfies \eqref{LLG}, then  so  does the function
\bqq
\m_{\lambda}(x,t)=\m(\lambda x,\lambda^2 t), \qquad \lambda>0.
\eqq
Therefore is natural to study the existence of self-similar solutions (of expander type), i.e.\
a solution $\m$ satisfying
\bq\label{def-self-sim}
\m(x,t)=\m(\lambda x,\lambda^2 t),  \quad \forall \lambda>0,
\eq
or, equivalently,
$$\m(x,t)=\f\left(\frac{x}{\sqrt t}\right),$$
for some $\f:\mathbb{R}^N\longrightarrow \mathbb{S}^2$ profile of $\m$. In particular we have the relation
$\f(y)=\m(y,1)$, for  all $y\in \R^N$.
From \eqref{def-self-sim} we see that, at least formally,
a necessary condition for the existence of a self-similar solution is that
initial condition $\m^0$ be homogeneous of degree 0, i.e.
$$\m^{0}(\lambda x)=\m^0(x), \quad \forall \lambda>0.$$
Since the norm in $X(\R^N\times \R^+;\R^3)$ is invariant under this scaling, i.e.
$$\norm{\m_\lambda}_X=\norm{\m}_X, \quad \forall\lambda>0,$$
where $\m_\lambda$ is defined by \eqref{def-self-sim},
Theorem~\ref{thm:cauchy-LLG} yields the following result concerning the existence
of self-similar solutions.
\begin{corollary}\label{cor-self-similar}
With the same notations and  hypotheses as in Theorem~\ref{thm:cauchy-LLG}, assume also that $\m^0$
is homogeneous of degree zero. Then the solution
$\m$ of \eqref{LLG} provided  by Theorem~\ref{thm:cauchy-LLG} is self-similar.
In particular there exists a smooth profile $\f : \R^N\to \S^2$ such that
$$\m(x,t)=\f\left(\frac{x}{\sqrt t}\right),$$
for all $x\in\R^N$  and $t>0$, and $\f$ satisfies the equation
$$-\frac12y\cdot \grad \f(y)=\beta  \f(y)\times \Delta \f(y) -\alpha \f(y) \times (\f(y)\times \Delta \f(y)),
$$
for all $y\in \R^N$. Here $y\cdot \grad \f(y)=(y\cdot \grad f_1(y),\dots, y\cdot \grad f_N(y))$.
\end{corollary}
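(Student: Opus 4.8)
The plan is to exploit the scaling invariance of \eqref{LLG} together with the uniqueness assertion of Theorem~\ref{thm:cauchy-LLG}. For each $\lambda>0$ I would set $\m_\lambda(x,t):=\m(\lambda x,\lambda^2 t)$, where $\m$ is the solution furnished by Theorem~\ref{thm:cauchy-LLG}. By the scaling invariance recalled above, $\m_\lambda$ is again a solution of \eqref{LLG}. The first task is to check that $\m_\lambda$ lies in the same uniqueness class as $\m$, i.e.\ that it satisfies the bounds \eqref{cond-small-m}. Since the map $(x,t)\mapsto(\lambda x,\lambda^2 t)$ is a bijection of $\R^N\times\R^+$, the quantity $\inf_{x,t} m_{3,\lambda}(x,t)$ equals $\inf_{x,t} m_3(x,t)$, and the scale invariance of the seminorm gives $[\m_\lambda]_X=[\m]_X$; hence both conditions in \eqref{cond-small-m} are preserved.

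Second, I would verify that $\m_\lambda$ attains the same initial datum $\m^0$ in the distributional sense required by the theorem. For a fixed test function $\varphi\in \mathcal S(\R^N)$, the change of variables $z=\lambda x$ yields
\[
\langle \m_\lambda(\cdot,t),\varphi\rangle=\langle \m(\cdot,\lambda^2 t),\lambda^{-N}\varphi(\cdot/\lambda)\rangle .
\]
Letting $t\to0^+$ and using that $\m(\cdot,s)\to\m^0$ in $\mathcal S'$ as $s\to0^+$ (part ii) of Theorem~\ref{thm:cauchy-LLG}), together with the homogeneity $\m^0(\lambda x)=\m^0(x)$, I would conclude $\langle \m_\lambda(\cdot,t),\varphi\rangle\to\langle\m^0,\varphi\rangle$. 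Thus $\m_\lambda$ and $\m$ are two solutions of \eqref{LLG} with the same initial condition, both satisfying \eqref{cond-small-m}, and the uniqueness in Theorem~\ref{thm:cauchy-LLG} forces $\m_\lambda=\m$ for every $\lambda>0$. This is precisely the self-similarity relation \eqref{def-self-sim}.

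Once self-similarity is established, choosing $\lambda=1/\sqrt t$ gives $\m(x,t)=\m(x/\sqrt t,1)$, so the profile $\f(y):=\m(y,1)$ satisfies $\m(x,t)=\f(x/\sqrt t)$ and is smooth because $\m$ is (part i) of the theorem). To obtain the profile equation I would substitute this ansatz into \eqref{LLG}: writing $y=x/\sqrt t$, the chain rule gives $\partial_t\m=-\tfrac1{2t}\,y\cdot\grad\f(y)$ and $\Delta_x\m=\tfrac1t\Delta\f(y)$. Plugging these into \eqref{LLG} and multiplying through by $t$, the factors $1/t$ cancel, leaving $-\tfrac12\,y\cdot\grad\f(y)=\beta\,\f\times\Delta\f-\alpha\,\f\times(\f\times\Delta\f)$, as claimed.

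The argument is essentially bookkeeping around a uniqueness statement, so I expect no deep obstacle. The only point requiring genuine care is the second step: transferring the attainment of the initial condition from $\m$ to $\m_\lambda$ in the correct topology, since the uniqueness in Theorem~\ref{thm:cauchy-LLG} is formulated for solutions attaining a prescribed datum in $\mathcal S'$. This is handled by the change of variables above, but it must be checked rather than taken for granted, as it is exactly where the homogeneity hypothesis on $\m^0$ enters.
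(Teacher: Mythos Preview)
Your proposal is correct and follows essentially the same route as the paper: the corollary is stated there without an explicit proof, relying on the scaling invariance $\norm{\m_\lambda}_X=\norm{\m}_X$ together with the uniqueness in Theorem~\ref{thm:cauchy-LLG}, which is exactly what you unpack. One small adjustment: the paper's notion of ``smooth global solution'' (and hence the uniqueness class) requires $\norm{(\m(\cdot,t)-\m^0)\varphi}_{L^1}\to 0$ for $\varphi\in C_0^\infty(\R^N)$ rather than convergence in $\mathcal S'$, but your change-of-variables argument transfers equally well to this $L^1$ formulation, so the step goes through unchanged.
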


\begin{remark}
Analogously, Theorem~\ref{thm:cauchy}
leads to the existence of self-similar solutions for \eqref{DNLS}, provided that $u^0$ is a
homogeneous function of degree zero.
\end{remark}

For instance, in dimensions $N\geq 2$, Corollary~\ref{cor-self-similar} applies to the initial condition
$$\m^0(x)=\H\left(\frac{x}{\abs{x}}\right),$$
with $\H$ a Lipschitz map from $\S^{N-1}$ to $\S^2\cap \{(x_1,x_2,x_3) : x_3\geq -1/2\}$,
provided that the Lipschitz constant is small enough. Indeed, using \eqref{BMO-brezis},
we have
$$[\m^0]_{BMO}\leq 4\norm{\H}_{\text{Lip}},$$
so that taking
$$\delta=1/2,\quad  \rho=\frac{\delta^4}{32K^4C},\quad
\ve_0=\frac{\delta^6}{256K^4C}\quad \text{and }\quad \norm{\H}_{\text{Lip}}\leq \ve_0,$$
the condition  \eqref{cond-LLG} is satisfied
and we can invoke Corollary~\ref{cor-self-similar}.

Other authors have considered self-similar solutions for the harmonic map flow (i.e.\ \eqref{LLG} with  $\alpha=1$) in different settings.
Actually, equation \eqref{HFHM} can be generalized for maps $\m : \mathcal{M}\times \R^+\to \mathcal{N}$,
with $\mathcal M$ and $\mathcal N$ Riemannian manifolds. Biernat and Bizo\'n \cite{biernat-bizon}
established results when  $\mathcal M=\mathcal N=\S^d$ and $3\leq d\leq 6.$
Also, Germain and Rupflin \cite{germain-rupflin} have investigated the case $\mathcal M=\R^d$ and  $\mathcal N=\S^d$,
in $d\geq 3$. In both works the analysis is done only for equivariant  solutions and does not  cover the  case
$\mathcal M=\R^N$ and $\mathcal{N}=\S^2$.

%
\subsection{The Cauchy problem for the one-dimensional LLG equation with a jump initial data}\label{Cauchy}
%

This section is devoted to prove Theorems~\ref{thm:stability} and~\ref{thm-non-uniq} in the introduction. These two results concern the question of well-posedness/ill-posedness of the Cauchy problem for the \emph{one-dimensional}
LLG equation associated with a step function initial condition of the form
\bq\label{def-m0}
\m^0_{\A^\pm}:=\A^+\chi_{\R^+}+\A^{-}\chi_{\R^-},
\eq
where  $\A^+$ and $\A^-$ are two given unitary vectors in $\S^2$.

%
\subsubsection{Existence, uniqueness and stability. Proof of Theorem~\ref{thm:stability}}\label{sub-self-1}
As mentioned in the introduction, in \cite{gutierrez-delaire} we proved the existence of the uniparametric smooth family of self-similar solutions 
$\{\m_{c,\alpha}\}_{c>0}$ of \eqref{LLG} for all  \mbox{$\alpha\in[0,1]$} with initial condition of the type \eqref{def-m0} given by
\bq\label{def-m0-c}
 \m^0_{c,\alpha}:=\A^+_{c,\alpha}\chi_{\R^+}+\A^-_{c,\alpha}\chi_{\R^-},
\eq
where $\A^{\pm}_{c,\alpha}\in\S^2$ are given by Theorem~\ref{thm-self}.
For the convenience of the reader, we collect some of the results
proved in  \cite{gutierrez-delaire} in the Appendix. The results in this section rely on a further understanding of the properties of the self-similar solutions $\m_{c,\alpha}$.

In Proposition~\ref{cotas-self} we show that
$$\m_{c,\alpha}=(m_{1,c,\alpha},m_{2,c,\alpha},m_{3,c,\alpha}) \in X(\R\times \R^+;\S^2),$$
that  $m_{3,c,\alpha}$ is far from the South Pole and that $[\m_{c,\alpha}]_X$ is small, if $c$ is small enough.
This will yield that  $\m_{c,\alpha}$ corresponds (up to a rotation) to the solution given by Corollary~\ref{cor-self-similar}.
More precisely, using the invariance under rotations of \eqref{LLG}, we can prove that, if the angle between $\A^+$ and $\A^-$ is small enough, then
the solution given by Corollary~\ref{cor-self-similar} with initial condition $\m^0_{\A^\pm}$ coincides (modulo a rotation) with
$\m_{c,\alpha}$, for some $c$. We have the following:

\begin{thm}\label{thm-small-angle}
Let $\alpha\in (0,1]$. There exist $L_1, L_2>0$, $\delta^*\in(-1,0)$ and $\vartheta^*>0$  such that the following holds. Let  $\A^+$, $\A^- \in \S^2$ and let $\vartheta$ be the angle between them.
If
\bq\label{cond-angle}
0<\vartheta\leq \vartheta^*,
\eq
then there exists a solution $\m$ of \eqref{LLG}
with initial condition $\m^0_{\A\pm}$. Moreover, there exists $0<c<\frac{\sqrt{\alpha}}{2\sqrt{\pi}}$, such that
$\m$ coincides up to a rotation with the self-similar solution  $\m_{c,\alpha}$, i.e.\
there exists $\boR\in SO(3)$, depending only on $\A^+$, $\A^-$, $\alpha$ and $c$, such that
\bq
\m=\boR \m_{c,\alpha},
\eq
and $\m$ is the unique solution satisfying
\begin{equation}\label{unicidad-self}
 \inf_{\substack{x\in \R\\ t>0}} m_3(x,t)\geq \delta^*
 \quad \textup{ and }\quad [\m]_{X}\leq L_1+L_2 c.
\end{equation}
\end{thm}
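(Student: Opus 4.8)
The plan is to realize the step datum $\m^0_{\A^\pm}$ as a \emph{rotation} of the datum $\m^0_{c,\alpha}$ of one of the self-similar profiles of \cite{gutierrez-delaire}, for a suitably chosen $c$, and then to invoke the uniqueness part of Corollary~\ref{cor:cauchy-LLG} to identify the solution. The first task is parameter matching. From Theorem~\ref{thm-self} and Proposition~\ref{cotas-self}, for each small $c>0$ the datum $\m^0_{c,\alpha}=\A^+_{c,\alpha}\chi_{\R^+}+\A^-_{c,\alpha}\chi_{\R^-}$ is a step function whose two values subtend an angle $\vartheta(c)$, which depends continuously on $c$ and tends to $0$ as $c\to 0^+$. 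I would check, using the asymptotics of the profiles, that $c\mapsto\vartheta(c)$ is continuous with range containing an interval $(0,\vartheta^*)$ on which the corresponding $c$ stays below $\sqrt{\alpha}/(2\sqrt\pi)$; then, given $\A^\pm$ with angle $\vartheta\in(0,\vartheta^*]$, the intermediate value theorem selects $c$ with $\vartheta(c)=\vartheta$. Since any two ordered pairs of unit vectors subtending the same angle $\vartheta\in(0,\pi)$ are related by a unique rotation, this yields $\boR\in SO(3)$, depending only on $\A^+,\A^-,\alpha,c$, with $\boR\A^\pm_{c,\alpha}=\A^\pm$, hence $\boR\m^0_{c,\alpha}=\m^0_{\A^\pm}$.

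By the rotational invariance of \eqref{LLG}, $\boR\m_{c,\alpha}$ is then a smooth solution with initial datum $\m^0_{\A^\pm}$, which already gives existence. To pin it down as \emph{the} solution, I would apply Corollary~\ref{cor:cauchy-LLG} with the reference point $\QQ:=\boR(0,0,-1)$ and verify its two hypotheses. First, since $\m^0_{\A^\pm}$ takes only the values $\A^\pm$, a direct computation on intervals centered at the jump gives $[\m^0_{\A^\pm}]_{BMO}=\tfrac12|\A^+-\A^-|=\sin(\vartheta/2)$, which is $\leq\ve_0$ once $\vartheta^*$ is small. Second, for any unit vector $v$ one has $|v-(0,0,-1)|^2=2(1+v_3)$, so $|\m^0_{\A^\pm}-\QQ|^2=|\m^0_{c,\alpha}-(0,0,-1)|^2=2(1+m^0_{3,c,\alpha})$; by Proposition~\ref{cotas-self} the values $\A^\pm_{c,\alpha}$ stay far from the South Pole for $c$ small, so $\inf|\m^0_{\A^\pm}-\QQ|^2\geq 2\delta$ for a fixed $\delta$ close to $2$. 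Choosing $\rho$ as in Remark~\ref{rem-set-delta} so that \eqref{cond-LLG} holds, Corollary~\ref{cor:cauchy-LLG} produces a unique solution in the class \eqref{cond-cor}.

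It then remains to check that $\boR\m_{c,\alpha}$ lies in that class, forcing it to coincide with the solution furnished by the corollary. Because the $X$-seminorm only involves $|\grad\cdot|$ and is rotation invariant, $[\boR\m_{c,\alpha}]_X=[\m_{c,\alpha}]_X\lesssim c$ by Proposition~\ref{cotas-self}; for $\vartheta^*$ (hence $c$) small this fits the $X$-bound in \eqref{cond-cor} and simultaneously yields the form $[\m]_X\leq L_1+L_2 c$ in \eqref{unicidad-self}. Likewise $|\boR\m_{c,\alpha}-\QQ|^2=2(1+m_{3,c,\alpha})$, and the uniform lower bound $\inf_{x,t}m_{3,c,\alpha}\geq -1+\delta'$ from Proposition~\ref{cotas-self} places $\boR\m_{c,\alpha}$ within the $\QQ$-condition of \eqref{cond-cor}. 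Rewriting this lower bound in the $\boR$-frame gives $\inf_{x,t}(\boR^{-1}\m)_3=\inf_{x,t}m_{3,c,\alpha}\geq\delta^*$, which is precisely the first condition in \eqref{unicidad-self}. Uniqueness in Corollary~\ref{cor:cauchy-LLG} then forces $\m=\boR\m_{c,\alpha}$.

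The hard part will be the parameter-matching step: extracting from the profiles of \cite{gutierrez-delaire} (via Theorem~\ref{thm-self} and Proposition~\ref{cotas-self}) that the angle $\vartheta(c)$ subtended by $\A^\pm_{c,\alpha}$ is continuous in $c$ with $\vartheta(c)\to 0$ and the right range, so that it can be inverted to select $c$ from $\vartheta$ while keeping $c<\sqrt\alpha/(2\sqrt\pi)$. The rest is essentially bookkeeping, namely arranging $\delta,\rho,\ve_0$ (guided by Remark~\ref{rem-set-delta}) so that the uniqueness class of Corollary~\ref{cor:cauchy-LLG} coincides with \eqref{unicidad-self} uniformly for all small $\vartheta$.
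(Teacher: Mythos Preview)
Your proposal is correct and follows the same strategy as the paper: match the angle via the intermediate value theorem on $c\mapsto\vartheta_{c,\alpha}$, rotate, and invoke the well-posedness/uniqueness theorem. The only cosmetic difference is that the paper first applies Theorem~\ref{thm:cauchy-LLG} in the canonical frame (with $\delta=1/2$, $\rho=1/(2^7K^4C)$, $\ve_0=4c\sqrt\pi/\sqrt\alpha$) and then rotates, whereas you rotate first and apply Corollary~\ref{cor:cauchy-LLG} with $\QQ=\boR(0,0,-1)$; these are equivalent since the corollary is proved precisely by that rotation. For the parameter-matching step you flag as the hard part, the paper supplies the missing quantitative input via Lemma~\ref{lemma-theta}, which gives $\vartheta_{c,\alpha}\geq\arccos(1-c^2\pi+32c^3\sqrt\pi/\alpha^2)$ for small $c$; combined with continuity and $\vartheta_{c,\alpha}\to0$ as $c\to0^+$ (from Theorem~\ref{thm-self}\ref{cotas-A}), this yields the required range $(0,\vartheta^*]$ with $c<\sqrt\alpha/(2\sqrt\pi)$. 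Your remark that the $\delta^*$ condition in~\eqref{unicidad-self} is really a constraint on $(\boR^{-1}\m)_3$ rather than on $m_3$ itself is well taken; the paper's phrasing is loose on this point.
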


In order to prove Theorem~\ref{thm-small-angle}, we need some preliminary estimates for $\m_{c,\alpha}$
in terms of $c$ and $\alpha$. To obtain them, we use some properties of the profile profile
$\f_{c,\alpha}=( f_{1,c,\alpha}, f_{2,c,\alpha}, f_{3,c,\alpha})$
 constructed in  \cite{gutierrez-delaire} using the Serret--Frenet equations with initial conditions
  \bqq
  f_{1,c,\alpha}(0)=1,\quad f_{2,c,\alpha}(0)=f_{3,c,\alpha}(0)=0.
  \eqq
Also,
 \bqq
  \abs{f'_{j,c,\alpha}(s)}  \leq ce^{-\alpha s^2/4}, \text{ for all }s\in \R,
  \eqq
  for $j\in\{1,2,3\}$ and
 \bq\label{m-f}
\m_{c, \alpha}(x,t) =\f_{c,\alpha}\left( \frac{x}{\sqrt{t}}  \right),  \quad \text{for all }(x,t)\in\R\times \R^+.
\eq
Hence, for any $x\in \R$,
$$
\abs{f_{3,c_\alpha}(x)}=\abs{f_{3,c_\alpha}(x)-f_{3,c_\alpha}(0)}\leq \int_{0}^{\abs{x}}ce^{-\alpha \sigma^2/4}d\sigma\leq
c\frac{\sqrt{\pi}}{\sqrt\alpha}.
$$
Since the same estimate holds for $f_{2,c,\alpha}$, we conclude that
 \bq\label{cota-m3}
 \abs{m_{2,c,\alpha}(x,t)}\leq c\frac{\sqrt{\pi}}{\sqrt\alpha},\quad \text{ and }\quad \abs{m_{3,c,\alpha}(x,t)}\leq c\frac{\sqrt{\pi}}{\sqrt\alpha} \quad \textup{ for all } (x,t)\in \R\times \R^+.
 \eq
Moreover, since
$$\A_{c,\alpha}^{\pm}=\lim_{x\to\pm\infty}\f_{c,\alpha}(x),$$
we also get
\bq\label{cota-A}
\abs{A^\pm_{j,c,\alpha}}\leq  c\frac{\sqrt{\pi}}{\sqrt\alpha}, \quad \textup{ for } j\in\{2,3\}.
\eq
We now provide some further properties of the self-similar solutions.

\begin{prop}\label{cotas-self}
For  $\alpha\in (0,1]$ and $c>0$, we have
\begin{gather}
\label{cond-4}
\norm{m_{2,c,\alpha}^0}_{L^\infty}\leq c\frac{\sqrt{\pi}}{\sqrt\alpha}, \quad
\norm{m_{3,c,\alpha}^0}_{L^\infty}\leq c\frac{\sqrt{\pi}}{\sqrt\alpha}, \quad
  \sup_{t>0} \norm{m_{3,c,\alpha}}_{L^\infty}\leq  c\frac{\sqrt{\pi}}{\sqrt\alpha},\\
  \label{cond-1} [\m^0_{c,\alpha}]_{BMO}\leq  2c\frac{\sqrt{2\pi}}{\sqrt\alpha},\\
\label{cond-2} \sqrt t\norm{\partial_{x} \m_{c,\alpha}}_{\infty}=c, \quad \text{for all }t>0,\\
\sup_{\substack{x\in\R\\r>0}}\label{cond-3}\frac1{r}\int_{Q_r(x)} \abs{\partial_y \m_{c,\alpha} (y,t) }^2\,dt\,dy \leq  \frac{2\sqrt{2\pi}c^2}{\sqrt{\alpha}}.
\end{gather}
In particular, $\m_{c,\alpha}\in X(\R\times \R^+;\S^2)$ and
\bq\label{semi-norm-self}
[\m_{c,\alpha}]_{X}\leq \frac{4 c}{\alpha^{\frac{1}{4}}}.
\eq
\end{prop}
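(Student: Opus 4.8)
The plan is to establish the five estimates \eqref{cond-4}--\eqref{cond-3} one at a time from the pointwise control of $\f_{c,\alpha}$ recalled above, and then assemble them into \eqref{semi-norm-self}. Throughout I use the self-similar form \eqref{m-f}, which gives $\partial_x \m_{c,\alpha}(x,t)=t^{-1/2}\f'_{c,\alpha}(x/\sqrt t)$, together with the fact that the profile is built from the Serret--Frenet system with curvature $c\,e^{-\alpha s^2/4}$ (Theorem~\ref{thm-self}), so that $\abs{\f'_{c,\alpha}(s)}=c\,e^{-\alpha s^2/4}$ with maximum $c$ attained at $s=0$. The bounds \eqref{cond-4} are then immediate: since $\m^0_{c,\alpha}$ takes only the two values $\A^\pm_{c,\alpha}$, the first two follow from \eqref{cota-A}, and the third is \eqref{cota-m3} read as a supremum in $t$. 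For \eqref{cond-2}, the change of variables $s=x/\sqrt t$ turns $\sqrt t\,\norm{\partial_x \m_{c,\alpha}(\cdot,t)}_{L^\infty}$ into $\sup_{s\in\R}\abs{\f'_{c,\alpha}(s)}=c$, for every $t>0$.

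For the Carleson term \eqref{cond-3}, I would compute $\abs{\partial_y \m_{c,\alpha}(y,t)}^2=\frac{c^2}{t}e^{-\alpha y^2/(2t)}$ and bound the integral over $Q_r(x)=(x-r,x+r)\times[0,r^2]$ by the integral over $\R\times[0,r^2]$, which removes the $x$-dependence. Integrating in $y$ first produces the Gaussian $\int_\R e^{-\alpha y^2/(2t)}\,dy=\sqrt{2\pi t/\alpha}$, after which $\int_0^{r^2} c^2\sqrt{2\pi/(\alpha t)}\,dt=2r\,c^2\sqrt{2\pi/\alpha}$; dividing by $r$ yields exactly $2\sqrt{2\pi}\,c^2/\sqrt\alpha$, uniformly in $x$ and $r$.

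The one estimate requiring care is \eqref{cond-1}, and this is where the main obstacle lies. Since the mean of $\m^0_{c,\alpha}$ over any interval lies in the convex hull of $\{\A^+_{c,\alpha},\A^-_{c,\alpha}\}$ (intervals missing the origin contribute nothing, the function being locally constant there), the oscillation is controlled by the gap, giving $[\m^0_{c,\alpha}]_{BMO}\le \abs{\A^+_{c,\alpha}-\A^-_{c,\alpha}}$. The delicate point is to bound this gap \emph{sharply enough}, since a crude three-component estimate (controlling each $\abs{A^+_{j,c,\alpha}-A^-_{j,c,\alpha}}$ separately) loses a factor $\sqrt3/\sqrt2$ and just misses the stated constant. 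I would instead invoke the symmetry relating $\A^+_{c,\alpha}$ and $\A^-_{c,\alpha}$ (their first components coincide and the other two are opposite; see Theorem~\ref{thm-self}), whence $\abs{\A^+_{c,\alpha}-\A^-_{c,\alpha}}=2\sqrt{(A^+_{2,c,\alpha})^2+(A^+_{3,c,\alpha})^2}\le 2\sqrt2\,c\sqrt\pi/\sqrt\alpha=2c\sqrt{2\pi}/\sqrt\alpha$ by \eqref{cota-A}.

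Finally, to obtain \eqref{semi-norm-self} I add the two pieces of $[\cdot]_X$: the first supremum equals $c$ by \eqref{cond-2}, and the second is at most $\big(2\sqrt{2\pi}\,c^2/\sqrt\alpha\big)^{1/2}=2^{3/4}\pi^{1/4}\,c/\alpha^{1/4}$ by \eqref{cond-3}. Using $\alpha\le1$ to replace $c$ by $c/\alpha^{1/4}$ and the numerical inequality $1+2^{3/4}\pi^{1/4}<4$, the sum is at most $4c/\alpha^{1/4}$; combined with $\sup_{t>0}\norm{\m_{c,\alpha}}_{L^\infty}=1$, this shows $\m_{c,\alpha}\in X(\R\times\R^+;\S^2)$ and gives the stated bound.
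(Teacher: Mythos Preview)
Your proof is correct and follows the paper's argument almost verbatim for \eqref{cond-4}, \eqref{cond-1}, \eqref{cond-2}, and \eqref{semi-norm-self}. The one genuine difference is in the Carleson estimate \eqref{cond-3}: the paper keeps the $y$-integral over $(x-r,x+r)$, does the $t$-integral first via the substitution $z=\alpha y^2/(2t)$ to produce the exponential integral $E_1(\alpha y^2/(2r^2))$, then evaluates $\int_0^\infty E_1(s^2)\,ds=\sqrt{\pi}$ by integration by parts, and finally changes variables once more. Your route---enlarging the $y$-integral to all of $\R$, applying Tonelli to swap the order, computing the Gaussian $\int_\R e^{-\alpha y^2/(2t)}\,dy=\sqrt{2\pi t/\alpha}$, and then $\int_0^{r^2} t^{-1/2}\,dt=2r$---is shorter and entirely elementary, landing on exactly the same constant. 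Both arguments exploit \eqref{derivada} in the same way; yours simply avoids the detour through $E_1$.
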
		
\begin{proof}[Proof of Proposition~\ref{cotas-self}]
The estimates in \eqref{cond-4} follow from \eqref{cota-m3} and \eqref{cota-A}.
To prove \eqref{cond-1}, we use \eqref{BMO-brezis},
 \eqref{def-m0-c}, \eqref{cond-4}  and the fact that
 \bq\label{Amenos}
 \A^-_{c,\alpha}=(A_{1,c,\alpha}^+,-A_{2,c,\alpha}^+,-A_{3,c,\alpha}^+),
 \eq
(see Theorem~\ref{thm-self}) to get
\begin{align*}
 [\m^0_{c,\alpha}]_{BMO}
 &\leq \sup_{x\in\R^N}\fint_{B_r(x)}\fint_{B_r(x)}\abs{\m^0_{c,\alpha}(y)-\m^0_{c,\alpha}(z)}\,dy\,dz\\ \nonumber
 &\leq 2\sqrt{ (A^+_{2,c,\alpha})^2+(A^+_{3,c,\alpha})^2}\sup_{x\in\R^N}\fint_{B_r(x)}\fint_{B_r(x)}\,dy\,dz\\  \nonumber
 &\leq \frac{2c\sqrt{2\pi}}{\sqrt{\alpha}}. \nonumber
\end{align*}

From \eqref{derivada} we obtain the equality in  \eqref{cond-2} and  also
 \bq\label{1}
 I_{r,x}:=\frac1{r}\int_{Q_r(x)} \abs{\partial_y \m_{c,\alpha} (y,t) }^2\,dt\,dy
 =\frac{c^2}{r}\int_{x-r}^{x+r}\int_0^{r^2}\frac{ e^{\frac{-\alpha y^2}{2t}} } {t} \,dt\,dy.
 \eq
Performing the change of variables $z=(\alpha y^2)/(2t)$, we see that
\bq\label{2}
\int_0^{r^2}\frac{ e^{\frac{-\alpha y^2}{2t}} } {t} \,dt=E_1\left(\frac{\alpha y^2}{2r^2}\right),
\eq
where $E_1$ is the exponential integral function
$$E_1(y)=\int_{y}^\infty\frac{e^{-z}}{z}\,dz.$$
This function satisfies that $\lim_{y \to 0^+}E_1(y)=\infty$ and $\lim_{y \to \infty}E_1(y)=0$ (see e.g. \cite[Chapter 5]{abram}).
Moreover, taking $\epsilon>0$ and integrating by parts,
\bq\label{proof-ipp}
\int_{\epsilon}^\infty E_1(y^2)\,dy=\left. yE_1(y^2)\right|^{\infty}_{\epsilon}+2\int_{\epsilon}^\infty e^{-y^2}\,dy,
\eq
so L'H\^opital's rule shows that the first term in the r.h.s.\ of \eqref{proof-ipp} vanishes as $\epsilon\to 0^+$.
Therefore, the Lebesgue's monotone convergence theorem allows to conclude that $E_1(y^2)\in L^1(\R^+)$
and
\bq\label{E1}
\int_{0}^\infty E_1(y^2)=\sqrt{\pi}.
\eq
By using  \eqref{1}, \eqref{2}, \eqref{E1},
and making the change of variables $z=\sqrt{\alpha}y/(r\sqrt{2})$, we obtain
\bq\label{Ixr}
I_{r,x}=\frac{c^2}{r}\int_{x-r}^{x+r}E_1\left(\frac{\alpha y^2}{2r^2}\right)\,dy=
\frac{\sqrt{2}c^2}{\sqrt{\alpha}}
\int_{ \frac{\sqrt{\alpha}}{\sqrt{2}}(\frac{x}{r}-1)}^{ \frac{\sqrt{\alpha}}{\sqrt{2}}(\frac{x}{r}+1)}E_1(z^2)\,dz\leq
\frac{\sqrt{2}c^2}{\sqrt{\alpha}}\cdot 2\sqrt{\pi},
\eq
which leads to \eqref{cond-3}.
Finally, the bound in \eqref{semi-norm-self} easily follows from those in \eqref{cond-2} and \eqref{cond-3} and the elementary inequality
\bqq\Big( 1+
\Big(\frac{2\sqrt{2\pi}}{\sqrt{\alpha}}\Big)^{1/2}
\Big)\leq
\frac{1}{\alpha^{\frac{1}{4}}} \big( 1+
({2\sqrt{2\pi}})^{1/2}
\big)
\leq \frac{4}{\alpha^{\frac{1}{4}}}, \qquad \alpha\in(0,1].
\eqq
\end{proof}
%
\begin{proof}[Proof of Theorem~\ref{thm-small-angle}]
First, we consider the case when $\A^+=\A^+_{c,\alpha}$
and $\A^-=\A^-_{c,\alpha}$ (i.e.\ when  $\m^0_{\A^{\pm}}=\m^0_{c,\alpha}$) for some $c>0$. We will continue to show that the solution provided
by Theorem~\ref{thm:cauchy-LLG}  is exactly $\m_{c,\alpha}$, for $c$  small. Indeed, bearing in mind the estimates in Proposition~\ref{cotas-self},
we consider
$$c\leq \frac{\sqrt{\alpha}}{2\sqrt{\pi}},$$
so that
\bq\label{cota-inf-m}
\inf_{x\in \R} m^0_{3,c,\alpha}(x)\geq -\frac12.
\eq
In view of \eqref{cond-1}, \eqref{cota-inf-m}  and Remark~\ref{rem-set-delta}, we set
\bq\label{parametros}
\ve_0:=4c\frac{\sqrt \pi}{\sqrt{\alpha}}, \quad  \delta:=\frac12,\quad \rho:=\frac{\delta^4}{8K^4C}=\frac1{2^7K^4C},
\eq
where $C,K\geq 1$ are the constants given by Theorem~\ref{thm:cauchy-LLG}.
In this manner, from \eqref{cond-1}, \eqref{cota-inf-m} and \eqref{parametros}, we have
\bqq
\inf_{\R}{m^0_3}\geq -1+\delta \quad\textup{ and }\quad[\m^0]_{BMO}\leq \ve_0,
\eqq
and the condition \eqref{cond-LLG} is fulfilled
if $$\ve_0\leq \frac{\delta^6}{256 K^4 C},$$
or equivalently, if
$c\leq \tilde c$,  with
$$\tilde c:=\frac{\sqrt\alpha}{2^{16}K^4C\sqrt\pi}.
$$
Observe that in particular $\tilde c<\frac{\sqrt{\alpha}}{2\sqrt{\pi}}$.

For fixed $0<c<\tilde c$, we can apply Theorem~\ref{thm:cauchy-LLG}
to deduce the existence and uniqueness of a solution $\m$ of \eqref{LLG} satisfying
 \bq\label{cond-small-m-1}
 \inf_{\substack{x\in \R\\ t>0}} m_3(x,t)\geq -1+\frac{2}{1+K^2(\rho+2)^2}
 \quad \textup{ and }\quad [\m]_{X}\leq 4K\rho+\frac{2^9Kc\sqrt{\pi}}{\sqrt{\alpha}}.
 \eq
 Now by Proposition~\ref{cotas-self}, for fixed $0<c\leq \tilde c$, we have the following estimates for $\m_{c, \alpha}$
$$
 [\m_{c,\alpha}]_{X}\leq 4c/\alpha^{\frac{1}{4}} \qquad \text{and}\qquad
 \inf_{\substack{x\in \R\\ t>0}} m_{3,c,\alpha}(x,t)\geq -\frac{1}{2},
$$
so in particular $\m_{c,\alpha}$ satisfies \eqref{cond-small-m-1}. Thus the uniqueness of solution implies that
$\m=\m_{c,\alpha}$, provided that $c\leq \tilde c$.
Defining the constants  $L_1$,  $L_2$ and $\delta^*$ by
\bq\label{def-L1}
  L_1=4K\rho, \qquad L_2=\frac{2^9 K \sqrt{\pi}}{\sqrt{\alpha}} \qquad {\text{and}}\qquad  \delta^*=-1+\frac{2}{1+K^2(\rho+2)^2},
\eq
the theorem is proved in the case $\A^\pm=\A^\pm_{c,\alpha}$.

For the general case, we would like to understand which angles can be reached by varying the parameter $c$ in the range  $(0,\tilde c]$. 
To this end, for fixed $0<c\leq \tilde c$, let $\vartheta_{c,\alpha}$ be the angle between $\A^{+}_{c,\alpha}$ and $\A^{-}_{c,\alpha}$. From Lemma~\ref{lemma-theta},
$$
 \vartheta_{c,\alpha}\geq \arccos\left( 1-c^2\pi+32 \frac{c^3\sqrt{\pi}}{\alpha^2}  \right), \qquad \text{ for all }
  c\in\Big(0, \frac{\alpha^2\sqrt{\pi}}{32}\Big].
$$
Now, it is easy to see that the function $F(c)=\arccos\left( 1-c^2\pi+32 \frac{c^3\sqrt{\pi}}{\alpha^2}  \right)$ is strictly increasing on the interval $[0,\alpha^2\frac{\sqrt{\pi}}{48}]$ so that
\begin{equation}\label{F}
  F(c)>F(0)=0, \qquad \text{ for all } c\in\Big(0,\frac{\alpha^2\sqrt{\pi}}{48}\Big].
\end{equation}
Let $c^*=\min(\tilde c, \frac{\alpha^2\sqrt{\pi}}{48})$ and consider the map $T_\alpha: c\longrightarrow \vartheta_{c,\alpha}$ on $[0, c^*]$.
By  Lemma~\ref{lemma-theta}, $T_\alpha$ is continuous on $[0,c^*]$, $T_\alpha(0)=\lim_{c\rightarrow 0^{+}} T_\alpha(c)=0$ and, bearing in mind \eqref{F}, $T(c^*)=\vartheta_{c^*,\alpha}>0$. 
Thus, from the intermediate value theorem we infer that for any $\vartheta\in(0,\vartheta_{c^*,\alpha})$, there exists $c\in (0,c^*)$ such that
$$
\vartheta=T_\alpha(c)=\vartheta_{c,\alpha}.
$$
We can now complete the proof for any  $\A^+$, $\A^{-}\in\mathbb{S}^2$.  Let $\vartheta$ be the angle between $\A^+$ and $\A^-$. From the previous lines, we know that there exists $\vartheta^*:=\vartheta_{c^*,\alpha}$ such that if $\vartheta\in (0,\vartheta^*)$, there exists $c\in(0,c^*)$ such that $\vartheta=\vartheta_{c,\alpha}$. For this value of $c$, consider the initial value problem associated with $\m^0_{c,\alpha}$
and the constants defined in \eqref{def-L1}. We have already seen the existence of a unique solution $\m_{c,\alpha}$ of the LLG equation associated with this initial condition satisfying \eqref{unicidad-self}. Let $\boR\in SO(3)$ be the rotation on $\mathbb{R}^3$ such that $\A^+=\boR\A^{+}_{c,\alpha}$ and $\A^-=\boR\A^{-}_{c,\alpha}$. Then $\m:=\boR\m_{c,\alpha}$ solves \eqref{LLG} with initial condition $\m^0_{\A^\pm}$. Finally, recalling the above definition of  $L_1$, $L_2$ and $\delta^*$,  using the invariance of the norms under rotations and the fact that $\m_{c,\alpha}$ is the unique solution satisfying \eqref{cond-small-m-1}, it follows that $\m$ is the unique solution satisfying the conditions in the statement of the theorem.
\end{proof}

We are now  in position to give the proof of Theorem~\ref{thm:stability}, the second of our main results in this paper. In fact, we will
 see that Theorem~\ref{thm:stability} easily follows from  Theorem~\ref{thm-small-angle} and the well-posedness for the LLG equation stated in Theorem~\ref{thm:cauchy-LLG}.
\medskip

\begin{proof}[Proof of Theorem~\ref{thm:stability}]
Let $\vartheta^*$, $\delta^*$, $L_1$ and $L_2$ be the constants defined in the proof of  Theorem~\ref{thm-small-angle}.
Given $\A^+$ and $\A^-$ such that $0<\vartheta<\vartheta^*$, Theorem~\ref{thm-small-angle} asserts the existence of
\bq\label{range-c}
0<c<\frac{\sqrt{\alpha}}{2\sqrt{\pi}}
 \eq
 and $\boR\in SO(3)$ such that $\boR \m_{c,\alpha}$ is the unique solution of (LLG$_\alpha$) with initial condition $\m^{0}_{\A^{\pm}}$ satisfying \eqref{unicidad-self}, and in particular $\m^{0}_{\A^{\pm}}=\boR\m^{0}_{c,\alpha}$.
By hypothesis  $\m^0$ satisfies
\begin{equation}\label{cond-h-1}
\norm{\m^0-\m^0_{\A^\pm} }_{L^\infty}\leq \frac{c\sqrt{\pi}}{2\sqrt{\alpha}}  .
\end{equation}
Hence, defining $\m^0_\boR=\boR^{-1} \m^0$, recalling that $[f]_{BMO}\leq 2\norm{f}_{L^\infty}$
and using the invariance of the norms under rotations, we deduce from \eqref{cond-h-1} that
$$
\norm{ \m^0_\boR }_{L^\infty}\leq \norm{ \m^0_{c,\alpha}}_{L^\infty} + \frac{c\sqrt{\pi}}{2\sqrt{\alpha}}
\quad\text{ and }\quad
[\m^0_\boR ]_{BMO}\leq [\m^0_{c,\alpha}]_{BMO} + \frac{c\sqrt{\pi}}{\sqrt{\alpha}}.
$$
Then, by Proposition~\ref{cotas-self},
\bq\label{MoR}
\norm{ \m^0_{3,\boR} }_{L^\infty}\leq\frac{2c\sqrt{\pi}}{\sqrt{\alpha}}
\quad\text{ and }\quad
[\m^0_\boR ]_{BMO}\leq \frac{4c\sqrt{\pi}}{\sqrt{\alpha}}.
\eq
From \eqref{range-c} and \eqref{MoR}, it follows that
$$
\m^0_{3,\boR}(x)\geq -1/2,\qquad \text{ for all } x\in \R.
$$
Therefore, as in the proof of Theorem~\ref{thm-small-angle},
we can apply Theorem~\ref{thm:cauchy-LLG} with the values of $\ve_0$, $\delta$ and $\rho$ given in \eqref{parametros}
to deduce the existence of a unique (smooth) solution $\m_\boR$ of \eqref{LLG} with initial condition  $\m^0_\boR$
satisfying
$$
  \inf_{\substack{x\in \R\\ t>0}} \m_{3,\boR}(x,t)\geq -1+\frac{2}{1+K^2(\rho+2)^2}=\delta^*
 \quad \textup{ and }\quad [\m_\boR]_{X}\leq 4K\rho+\frac{2^{9}Kc\sqrt{\pi}}{\sqrt{\alpha}}=L_1+L_2 c.
$$
Since we have taken the values for  $\ve_0$, $\delta$ and $\rho$ as in the proof
Theorem~\ref{thm-small-angle}, Theorem~\ref{thm:cauchy-LLG} also implies that
\bqq
\norm{ \m_\boR-\m_{c,\alpha}}_{X}\leq  480K \norm{\m^0_\boR-\m^0_{c,\alpha} }_{L^\infty}.
\eqq
The conclusion of the theorem follows defining $\m=\boR\m_{\boR}$ and  $L_3=480K$, and using once again the invariance of the norm under rotations.
\end{proof}
%
\subsubsection{Multiplicity of solutions. Proof of Theorem~\ref{thm-non-uniq}}\label{sub-self-2}

As proved in \cite{gutierrez-delaire}, when $\alpha=1$, the self-similar solutions are explicitly given by
\bq\label{explicit-for}
\m_{c,1}(x,t)=( \cos(c\Erf(x/\sqrt t)), \sin(c\Erf(x/\sqrt t)),0), \quad \text{ for all }(x,t)\in\R\times \R^+,
\eq
for every $c>0$, where $\Erf(\cdot)$ is the non-normalized error function
$$
 \Erf(s)=\int_0^s e^{-\sigma^2/4}\, d\sigma.
$$
In particular,
$$\vec \A^\pm_{c,1}=(\cos(c\sqrt\pi),\pm \sin(c\sqrt\pi),0 )$$
and the angle between $\A^+_{c,1}$ and $\A^-_{c,1}$ is given by
\bq\label{explicit-for2}
\vartheta_{c,1}=\arccos(\cos(2c\sqrt{\pi})).
\eq
\begin{figure}[ht!]
\begin{center}
  \begin{overpic}[trim=0 0 30 0,clip,scale=0.7]{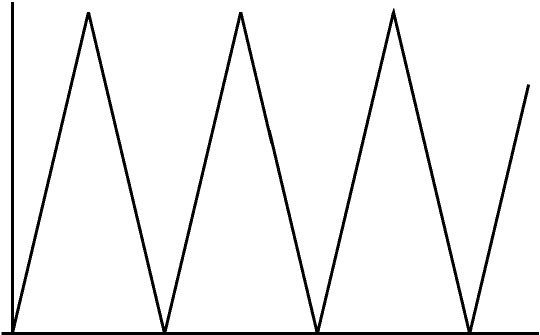}
\put(4,71){\small{$\vartheta_{c,1}$}}
\put(-2,64){\small{$\pi$}}
\put(45,-5){\small{$c$}}
\end{overpic}
  \end{center}
  \caption{The angle $\vartheta_{c,\alpha}$ as a function of $c$ for $\alpha=1$.}
\label{fig-angulo}
\end{figure}

Formula \eqref{explicit-for2} and Figure~\ref{fig-angulo} show that there are infinite values of $c$
that allow to reach any angle in $[0,\pi]$.
Therefore, using the  invariance of \eqref{LLG} under rotations, in the case when $\alpha=1$, one can easily prove the existence of multiple solutions associated with a given initial data of the form $\m^{0}_{\A^{\pm}}$ for any given vectors $\A^{\pm}\in \mathbb{S}^2$ (see argument in the proof included below). In the case that $\alpha$ is close enough to $1$, we can use a continuity argument
to prove that we still have multiple solutions. More precisely, Theorem~\ref{thm-non-uniq} asserts that for any given initial data of the form $\m^{0}_{\A^{\pm}}$ with angle between $\A^{+}$ and $\A^{-}$ in the interval $(0,\pi)$, if $\alpha$ is sufficiently close to one, then there exist {\it{at least}} $k$-distinct solutions of \eqref{LLG} associated with the same initial condition, for any given $k\in\mathbb{N}$.

The rest of this section is devoted to the proof of Theorem~\ref{thm-non-uniq}.

\begin{proof}[Proof of Theorem~\ref{thm-non-uniq}]
Let $k\in \N$,  $\A^\pm \in \S^2$ and $\vartheta\in (0,\pi)$ be the angle between $\A^+$ and  $\A^-$. Using the invariance of  \eqref{LLG} under rotations, it suffices to prove the existence of $\alpha_k\in (0,1)$  such that
for every $\alpha\in[\alpha_k,1]$ there exist $0<c_1<\dots<c_k$ such that
the angle $\vartheta_{c_j,\alpha}$ between $\A^+_{c_j,\alpha}$ and  $\A^-_{c_j,\alpha}$, satisfies
\bq\label{theta-l}
\vartheta_{c_j,\alpha}=\vartheta, \quad \text{ for all }j\in\{1,\dots,k\}.
\eq
In what follows, and since we want to show the existence of at least $k$-distinct solutions, we will assume without loss of generality that $k$ is large enough.

First observe that, since $\A^-_{c,\alpha}=(A_{1,c,\alpha}^+,-A_{2,c,\alpha}^+,-A_{3,c,\alpha}^+)$, we have the explicit
formula
$$\cos(\vartheta_{c,\alpha})=2(A^+_{1,c,\alpha})^2-1,$$
and using  Lemma~\ref{lemma-A} in the Appendix, we get
\bq\label{cota-theta}
\abs{\cos(\vartheta_{c,\alpha})-\cos(\vartheta_{c,1})}=\abs{2((A^+_{1,c,\alpha})^2-(A^+_{1,c,1})^2)}\leq 
4\abs{A^+_{1,c,\alpha}-A^+_{1,c,1}}\leq
4h(c)\sqrt{1-\alpha},
\eq
for all $\alpha\in[1/2,1]$, with $h:\mathbb{R}^{+}\longrightarrow \mathbb{R}^{+}$ an increasing function satisfying $\lim_{s\rightarrow \infty} h(s)=\infty$.

For $j\in \N$, we set $a_j=(2j+1)\sqrt{\pi}/2$ and $b_j=(2j+2)\sqrt{\pi}/2$, so
that \eqref{explicit-for2} and \eqref{cota-theta} yield
\bq\label{cota-theta-3}
\cos(\vartheta_{a_j,\alpha})\leq -1+4h(a_j)\sqrt{1-\alpha} \quad\text{ and }\quad
\cos(\vartheta_{b_j,\alpha})\geq 1-4h(b_j)\sqrt{1-\alpha}, \qquad \forall \alpha\in[1/2,1].
\eq
Define $l=\cos(\vartheta)$ and
$$
\alpha_{k}=\max\left( 1-\Big(\frac{1-l}{8h(b_k)}\Big)^2,1-\Big(\frac{1+l}{8h(b_k)}\Big)^2\right).
$$
Notice that, since $\vartheta\in(0,\pi)$, we have  $-1<l<1$ and thus $\alpha_k<1$. Also, since $h$ diverges to $\infty$, we can assume without loss of generality that $\alpha_k\in[1/2,1)$, and from the definition of $\alpha_k$ we have
$$
 0<\sqrt{1-\alpha_k}<\min\left( \frac{1-l}{8h(b_k)}, \frac{1+l}{8h(b_k)}  \right).
$$
Therefore, from \eqref{cota-theta-3} and  $h(a_j)<h(b_j)\leq h(b_k)$ (since $h$ is a strictly increasing function), we get
$$
\cos(\vartheta_{a_j,\alpha})\leq \frac{-1+l}{2} \quad\text{ and }\quad
 \frac{1+l}{2} \leq \cos(\vartheta_{b_j,\alpha}), \quad \forall j\in\{1,\dots,k\}, \ \forall \alpha\in[\alpha_k,1],
$$ and  thus
\begin{equation} \label{cota-theta-2}
\cos(\vartheta_{a_j,\alpha})< l<\cos(\vartheta_{b_j,\alpha}), \quad \forall j\in\{1,\dots,k\}, \ \forall \alpha\in[\alpha_k,1],
\end{equation}
since $l\in(-1,1)$.

Let us fix $\alpha\in[\alpha_k,1]$ and  $j\in\{1, \dots, k\}$. By Lemma~\ref{lemma-A}, $c\rightarrow \cos(\vartheta_{c,\alpha})$ is a continuous function on $c$. Therefore
\eqref{cota-theta-2} and the intermediate value theorem yield the existence  of $c_j\in[a_j,b_j]$
such that
$$
  \cos(\vartheta_{c_j,\alpha})=l=\cos(\vartheta),
$$
or equivalently,  such that $\vartheta_{c_j,\alpha}=\vartheta$.

Finally, for each $j\in\{1,\dots,k\}$, let $\boR_j\in SO(3)$ be
such that $\m^0_{A^\pm}=\boR_j \m^0_{c_j,\alpha}$, and define $\m_j$ by
 $\m_j=\boR_j \m_{c_j,\alpha}$. Then  $\m_j$   solves \eqref{LLG} with initial data $\m^{0}_{\A^\pm}$ and the control of $\partial_x\m_j$ in \eqref{derivada-mk} follows from the definition of $\m_j$ in terms of $\m_{c_j,\alpha}$ and the analogous property established in Theorem~\ref{thm-self}  for the self-similar solution $\m_{c_j,\alpha}$  (see~\eqref{derivada}).

In the case when $\alpha=1$ and $\vartheta\in[0,\pi]$, formula \eqref{explicit-for2} shows that 
sequence $\{c_j\}_{j\geq 1}$ in \eqref{formula-c-j} satisfies  $\vartheta_{c_j,1}=\vartheta$ for all $j\in\mathbb{N^*}$. 
The result follows by considering the sequence of solutions $\{\m_j\}_{j\geq 1}$  described  above.
\end{proof}
\begin{remark}
Notice that the proof given above also shows how close $\alpha_k$  needs to be to 1
in terms of the (fixed) angle $\vartheta\in(0,\pi)$, and in particular $\alpha_k\rightarrow 1$ as $k\rightarrow \infty$, and $\alpha_k\rightarrow 1$ as $\vartheta\rightarrow 0$ (i.e.\ when $l\rightarrow 1$).
\end{remark}

\begin{remark}
For  $\alpha=1$ and $c>0$, the function
$$u(x,t)=\P(\m_{c,1})=\exp\big(ic \Erf(x/\sqrt t)  \big)$$
is a solution of \eqref{DNLS} with initial condition
$$u^0=e^{ic\sqrt{\pi}} \chi_{\R^+}+e^{-ic\sqrt{\pi}} \chi_{\R^-}.$$
Therefore there is also a multiplicity phenomenon for the equation \eqref{DNLS}.
\end{remark}

\subsection{A singular solution for a nonlocal Schr\"odinger equation}
\label{sec-singular}
%
We have used the stereographic projection to establish a well-posedness result for \eqref{LLG}.
Melcher \cite{melcher} showed a global well-posedness result, provided that 
$$\norm{\grad \m^0}_{L^N}\leq \ve, \quad \m^0-\QQ\in H^1(\R^N)\cap W^{1,N}(\R^N), \quad \alpha>0,\quad  N\geq 3,$$
for some $\QQ\in\mathbb{S}^2$ and $\ve>0$ small. Later,  Lin, Lan and Wang~\cite{lin-lai-wang} 
improved this result and proved  global well-posedness under the conditions 
$$\norm{\grad\m^0}_{M^{2,2}}\leq \ve, \quad \m^0-\QQ\in L^2(\R^N), \quad \alpha>0, \quad N\geq 2,$$
for some $\QQ\in\S^2$ and $\ve>0$ small.\footnote{We recall that $v\in M^{2,2}(\R^N)$ if $v\in L^2_\loc(\R^N)$ and 
$$\norm{v}_{M^{2,2}}:= \sup_{\substack{ x\in \R^N\\ r>0}} \frac{1}{r^{(N-2)/2}}\norm{v}_{L^2(B_r(x))}<\infty.$$}
In the context of Theorem~\ref{thm-cauchy-intro} and using the characterization of $BMO^{-1}$
in Theorem~\ref{tataru}, the second condition in \eqref{CI-intro} says that $\norm{\grad \m^0}_{BMO^{-1}}$ is small.
In view of the embeddings 
\bqq
L^N(\R^N)\subset M^{2,2}(\R^N)\subset BMO^{-1}(\R^N),
\eqq
for $N\geq 2$, we deduce that Theorem~\ref{thm-cauchy-intro} includes initial conditions with less regularity, 
as long as their essential range is not $\S^2$. 
The argument in \cite{lin-lai-wang,melcher}  is
based on the method of moving frames that produces a covariant
complex Ginzburg--Landau equation. 
One of the aims of this subsection is to compare their approach in the context of the self-similar solutions $\m_{c,\alpha}$,
and in particular to draw attention to a possible difficulty in using it to study these solutions.

In the sequel we consider the one-dimensional case $N=1$ and $\alpha\in[0,1]$.
Then the moving frames technique can be recast as a Hasimoto transformation as follows.
Assume that $\m$ is the tangent vector of a curve in $\R^3$, i.e.\ $\m=\ptl_{x}\XX$, for some curve $\XX(x,t)\in
\R^3$ parametrized by the arc-length. It can be shown
(see \cite{daniel-lak}) that if $\m$ evolves under \eqref{LLG}, then the torsion $\tau$ and the curvature $\cc$
of $\XX$ satisfy
\begin{align*}
\partial_t\tau=&\beta\left(\cc \partial_x \cc+\partial_x \Big(\frac{ \partial_{xx} \cc-\cc\tau^2}{\cc}\Big) \right)
+\alpha\left(\cc^2\tau+
\partial_x \Big(\frac{\partial_x (\cc\tau)+\tau\partial_x \cc}{\cc}\Big)
\right), \\
\partial_t \cc=&\beta\left(-\partial_x (\cc\tau)-\tau \partial_x \cc \right) +\alpha\left(\partial_x \cc-\cc\tau^2  \right).
\end{align*}
Hence, defining the Hasimoto transformation \cite{hasimoto} (also called filament function)
\bq\label{hasimoto}
v(x,t)=\cc(x,t)\displaystyle{e^{i\int_0^x\tau(\sigma,t)\,d\sigma}},
\eq
we verify that $v$ solves the following dissipative Schr\"odinger (or complex Ginzburg--Landau) equation
\bq\label{schrodinger}
i\partial_t v+(\beta-i\alpha) \partial_{xx} v+\frac{v}{2}\left(\beta\abs{v}^2+2\alpha\int_0^x
\Im(\bar v \partial_x v)-A(t)\right)=0,
\eq
where $\beta=\sqrt{1-\alpha^2}$ and
\bqq
A(t)=\left(\beta\left(\cc^2+\frac{2(\partial_{xx} \cc-\cc\tau^2)}{\cc}\right)+2\alpha\left(\frac{ \partial_{x}(\cc\tau)+\tau\partial_{x}\cc}{\cc}\right)\right)(0,t).
\eqq
The curvature and torsion associated with the self-similar solutions $\m_{c,\alpha}$ are (see \cite{gutierrez-delaire}):
\begin{equation} \label{cur-tor}
 \cc_{c,\alpha}(x,t)=\frac{c}{\sqrt{t}}e^{-\frac{\alpha x^2}{4t}}
 \qquad {\hbox{and}} \qquad
 \tau_{c,\alpha}(x,t)= \frac{\beta x}{2\sqrt{t}}.
\end{equation}
Therefore in this case
\bq\label{A(t)}
A(t)= \frac{\beta c^2}{t}
\eq
and the Hasimoto transformation of $\m_{c,\alpha}$ is
$$
v_{c,\alpha}(x,t)=\frac{c}{\sqrt{t}}e^{(-\alpha+i\beta)\frac{x^2}{4t}}.
$$
In particular $v_{c,\alpha}$ is a solution of \eqref{schrodinger} with $A(t)$ as in \eqref{A(t)},
for all $\alpha\in[0,1]$ and $c>0$. Moreover, the Fourier transform of this function (w.r.t.\ the space variable)
is
\bqq
\widehat v_{c,\alpha}(\xi,t)=2c\sqrt{\pi(\alpha+i\beta)}  e^{-(\alpha+i\beta)\xi^2t},
\eqq
so that $v_{c,\alpha}$ is a solution of \eqref{schrodinger} with a Dirac delta as initial condition:
\bqq
v_{c,\alpha}(\cdot,0)=2c\sqrt{\pi(\alpha+i\beta)}\delta.
\eqq
Here $\delta$ denotes the delta distribution at the point $x=0$
and $\sqrt{z}$ denotes the square root of a complex number $z$ such
that $\Im (\sqrt{z})>0$.

In the limit cases $\alpha=0$ and $\alpha=1$, the first three terms in equation \eqref{schrodinger}
lead to a cubic Schr\"odinger equation and to a linear heat equation, respectively.
The Cauchy problem with a Dirac delta for these kind of equations associated with a power type non-linearity has been studied
by several authors (see e.g. \cite{banica-vega1} and the reference therein).
We recall two  classical results.

\begin{thm}[\cite{brezis-friedman}]\label{brezis-friedman}
Let $p\geq 2$ and $u\in L^p_{\loc}(\R\times \R^+)$ be a solution in the sense of distributions of
\bq\label{heat-eq}
 \partial_t u-\partial_{xx}u+\abs{u}^{p}u=0 \quad \text{ on } \R\times \R^+.
 \eq
Assume that
\bq\label{cond-brezis}
\lim_{t\to0^+}\int_\R  u(x,t)\varphi(x)\,dx=0,\quad \text{  for all }\varphi \in C_0(\R\setminus\{0\}),
\eq
where $C_0(\R\setminus\{0\})$ denotes the space of continuous functions with compact support in $\R\setminus\{0\}$.
Then $u\in C^{2,1}(\R\times [0,\infty))$ and $u(x,0)=0$ for all $x\in\R$.
In particular there is no solution of \eqref{heat-eq} such that
\bqq
\lim_{t\to0^+}\int_\R u(x,t)\varphi(x)\,dx=\varphi(0),\quad \text{  for all }\varphi \in C_0(\R^N).
\eqq
\end{thm}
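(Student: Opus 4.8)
The plan is to prove the removability statement—that $u\in C^{2,1}(\R\times[0,\infty))$ with $u(x,0)=0$ for all $x$—and then read off the Dirac nonexistence as an immediate corollary, since a solution with $\int_\R u(x,t)\varphi(x)\,dx\to\varphi(0)$ would have nonzero initial trace at the origin, contradicting $u(\cdot,0)\equiv 0$. The exponent condition $p\ge 2$ is exactly the critical/supercritical threshold in dimension $N=1$ (it is equivalent to $p+1\ge 1+2/N$), and it is precisely here that the argument must use $p\ge 2$, since for $p<2$ genuine Dirac solutions do exist. The first concrete step is a \emph{universal} a priori bound, independent of $u$: comparing $u$ with the spatially constant supersolution $w(t)=(pt)^{-1/p}$, which solves the ODE $w'+|w|^p w=0$ with $w(0^+)=+\infty$, the comparison principle yields $|u(x,t)|\le (pt)^{-1/p}$ for all $(x,t)\in\R\times\R^+$.

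Next I would treat the region away from the origin. By Kato's inequality, $|u|$ satisfies $\partial_t|u|-\partial_{xx}|u|+|u|^{p+1}\le 0$ in the sense of distributions, so $|u|$ is a nonnegative subsolution of the heat equation. On any strip $\{\,\epsilon\le|x|\le R\,\}$ the hypothesis \eqref{cond-brezis} says the distributional initial trace of $u$ vanishes; combined with the universal bound (which makes $u$ locally bounded there for $t>0$) and standard parabolic regularity and bootstrapping for the absorption equation \eqref{heat-eq}, this gives $u\in C^{2,1}$ up to $t=0$ on $\{\epsilon\le|x|\le R\}$ with $u(x,0)=0$, and moreover $u(\cdot,t)\to 0$ uniformly on $\{\epsilon\le|x|\le R\}$ as $t\to0^+$. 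In particular the flux $\partial_x|u|(\pm\epsilon,t)$ stays bounded as $t\to0^+$, a fact I will need at the boundary of the small ball $B_\epsilon$.

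The main obstacle is the origin: one must show $\int_{|x|<\epsilon}|u(x,t)|\,dx\to 0$ as $t\to0^+$ for each fixed $\epsilon>0$, which does \emph{not} follow from the pointwise bound (that only gives the useless $O(\epsilon\,t^{-1/p})$). The scaling $u_\lambda(x,t)=\lambda^{2/p}u(\lambda x,\lambda^2 t)$ preserves both \eqref{heat-eq} and the universal bound, and already shows that the mass in \emph{parabolic} balls is controlled, $\int_{|x|<\sqrt t}|u(x,t)|\,dx=O\!\big(t^{(1-2/p)/2}\big)$, which vanishes as $t\to0^+$ when $p>2$ and stays bounded in the borderline case $p=2$; this is the quantitative signature of supercritical absorption forbidding concentration. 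Passing from parabolic balls to a fixed ball $B_\epsilon$ is the delicate point, and I would do it by comparison with the \emph{maximal} (large) solution $W$ of \eqref{heat-eq} on $B_\epsilon\times(0,T)$ carrying the (finite, by the previous step) boundary values $\sup_{|x|=\epsilon}|u|$ and unrestricted initial data: the content of $p\ge 2$ is that this maximal solution has vanishing initial trace in $L^1_{\loc}$, so $\int_{B_\epsilon}|u(\cdot,t)|\le\int_{B_\epsilon}W(\cdot,t)\to 0$. Combining this with the uniform decay away from the origin yields $\int_\R u(x,t)\varphi(x)\,dx\to 0$ for every $\varphi\in C_0(\R)$, hence $u(\cdot,0)\equiv 0$, and $u\in C^{2,1}(\R\times[0,\infty))$ then follows from interior regularity for \eqref{heat-eq}. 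The crux—and the sole place $p\ge 2$ is essential—is the construction and trace analysis of this maximal solution, i.e.\ upgrading the pointwise bound to a fixed-ball mass bound that genuinely tends to zero.
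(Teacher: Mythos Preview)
The paper does not prove this theorem; it is stated with attribution to Brezis and Friedman \cite{brezis-friedman} and quoted as a known result, with no argument supplied. There is therefore no proof in the paper to compare your attempt against. For what it is worth, your outline follows the classical Brezis--Friedman strategy (the universal pointwise bound $|u|\le (pt)^{-1/p}$ via ODE comparison, regularity up to $t=0$ away from the origin using the vanishing-trace hypothesis, and then the non-concentration estimate at the origin exploiting the threshold $p\ge 2$), which is indeed the approach of the original reference.
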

In \cite{brezis-friedman} it is also proved that if $1<p<2$, equation \eqref{heat-eq} has a
global solution with a Dirac delta as initial condition, as in the case of the linear parabolic
equation. Concerning the Schr\"odinger equation, we have the following
ill-posedness result due to Kenig, Ponce and Vega~\cite{kenig-ponce-vega}.
\begin{thm}[\cite{kenig-ponce-vega}]\label{thm-KPV}
Let $p\geq 2$. Either there is no
solution in the sense of distributions of
\bq\label{cubic-NLS}
 i\partial_t u+\partial_{xx}u+\abs{u}^{p}u=0 \quad \text{ on } \R\times \R^+,
 \eq
with
\bqq
\lim_{t\to 0^+}u(\cdot, t)=\delta \quad \text{ in }S'(\R),
\eqq
in the class $u,\abs{u}^pu\in L^\infty(\R^+;S'(\R))$,
or there is more than one.
\end{thm}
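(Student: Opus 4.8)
The plan is to exploit the Galilean invariance of \eqref{cubic-NLS}, following the strategy of Kenig, Ponce and Vega. For a solution $u$ and a parameter $N\in\R$, set
\bqq
u_N(x,t):=e^{i(Nx-N^2t)}\,u(x-2Nt,t).
\eqq
A direct computation (using $\abs{e^{i(Nx-N^2t)}}=1$) shows that $iu_t+u_{xx}$ and $\abs{u}^pu$ transform in the same covariant way, so that $u_N$ again solves \eqref{cubic-NLS}. First I would check that the boost preserves the admissible class: multiplication by the bounded smooth factor $e^{i(Nx-N^2t)}$ and translation by $2Nt$ map $L^\infty(\R^+;S'(\R))$ into itself, so that $u_N,\abs{u_N}^pu_N\in L^\infty(\R^+;S'(\R))$.

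Next I would verify that $u_N$ retains the same initial datum. For $\varphi\in S(\R)$, the change of variables $y=x-2Nt$ gives
\bqq
\langle u_N(\cdot,t),\varphi\rangle=e^{iN^2t}\,\langle u(\cdot,t),\,e^{iN\cdot}\varphi(\cdot+2Nt)\rangle .
\eqq
As $t\to0^+$ the test functions $e^{iN\cdot}\varphi(\cdot+2Nt)$ converge to $e^{iN\cdot}\varphi$ in $S(\R)$; combining this with $u(\cdot,t)\to\delta$ in $S'$ and the equicontinuity of $\{u(\cdot,t)\}$ (a convergent net in $S'$ is bounded, hence equicontinuous), I obtain $\langle u_N(\cdot,t),\varphi\rangle\to (e^{iN\cdot}\varphi)(0)=\varphi(0)$, i.e.\ $u_N(\cdot,t)\to\delta$. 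Thus every $u_N$ is a solution in the required class with datum $\delta$. This yields the dichotomy: if $u_N\neq u$ for some $N$, then there is more than one solution and we are done; otherwise $u_N=u$ for all $N\in\R$.

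It remains to rule out the degenerate case $u_N\equiv u$. Writing $w(x,t):=e^{-ix^2/(4t)}u(x,t)$, the identity $u=u_N$ becomes, after the exponents cancel exactly, $w(x,t)=w(x-2Nt,t)$ for every $N$. For fixed $t>0$ this says $w(\cdot,t)$ is invariant under all translations, hence it is a constant $c(t)$, so $u(x,t)=c(t)e^{ix^2/(4t)}$. Substituting into \eqref{cubic-NLS} and separating modulus and phase in $c=\abs{c}e^{i\theta}$ reduces the equation to $\abs{c}'=-\abs{c}/(2t)$ and $\theta'=\abs{c}^p$, whence $\abs{c(t)}=A\,t^{-1/2}$ and $\theta'(t)=A^p t^{-p/2}$ for some $A\ge0$. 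Here the hypothesis $p\ge2$ is decisive: $t^{-p/2}$ is \emph{non-integrable} at $t=0$, so $\theta(t)$ has no limit as $t\to0^+$ (it diverges, logarithmically if $p=2$). Since $t^{-1/2}e^{ix^2/(4t)}\to\sqrt{4\pi i}\,\delta$ in $S'$, convergence of $u(\cdot,t)$ to $\delta$ would force $A\,e^{i\theta(t)}\sqrt{4\pi i}\to1$, and hence $e^{i\theta(t)}$ to converge — a contradiction (while $A=0$ gives $u\equiv0\not\to\delta$). Thus the case $u_N\equiv u$ is impossible, so a solution, if it exists, cannot be unique.

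The main obstacle is making the distributional bookkeeping rigorous rather than the algebra: the cancellation in the boost, the passage to the limit $t\to0^+$ for $u_N$ (which needs the equicontinuity coming from $u\in L^\infty(\R^+;S'(\R))$), and the step ``a translation-invariant distribution is constant.'' The conceptual heart — and the only place the restriction $p\ge2$ enters — is the non-integrability of $t^{-p/2}$ near the origin, which produces an infinitely oscillating phase incompatible with convergence to a fixed multiple of $\delta$; for $p<2$ this obstruction disappears, consistent with the positive existence results in the parabolic analogue discussed after Theorem~\ref{brezis-friedman}.
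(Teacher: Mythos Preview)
Your proof is correct and follows exactly the Kenig--Ponce--Vega strategy that the paper cites (and sketches in the remarks after Proposition~\ref{prop-NLS}): exploit Galilean invariance to produce a one-parameter family $u_N$ with the same datum, and rule out the degenerate case $u_N\equiv u$ by showing that the resulting solutions $c(t)e^{ix^2/(4t)}$ carry an unbounded phase $\theta'=A^p t^{-p/2}$ when $p\ge2$ and hence cannot converge in $S'(\R)$ --- precisely the content of part~{\it(i)} of Proposition~\ref{prop-NLS} for $\alpha=0$. The paper itself does not give a proof of this theorem (it is quoted from \cite{kenig-ponce-vega}), so there is nothing further to compare.
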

After performing an appropriate change of variables, equation \eqref{schrodinger} leads to the following equation
\bq\label{schrodinger-bis}
i\partial_t u+(\beta-i\alpha) \partial_{xx} u+\frac{u}{2}\left(\beta\abs{u}^2+2\alpha\int_0^x
\Im(\bar u \partial_y u)\,dy \right)=0.
\eq
Since $\alpha\in[0,1]$, the above equation can be seen as an intermediate model between \eqref{heat-eq} and \eqref{cubic-NLS}. Therefore one could expect that when $\alpha\in(0,1]$, the solutions of \eqref{schrodinger-bis} share similar properties to those established in 
Theorem~\ref{brezis-friedman} for the equation \eqref{heat-eq}. We will continue to observe that this is not necessarily the case. To this end, we need the following:
\begin{prop}\label{prop-NLS}
For all $\alpha\in[0,1]$ and for all $\c\in \C\setminus\{0\}$ the function  $w_{\c,\alpha} :\R\times \R^+\to\C$ given by
\bqq
w_{\c,\alpha}(x,t)=\frac{\c}{\sqrt{t} }\exp\left(
\frac{i\beta\abs{\c}^2}2 \ln(t)+ (i\beta-\alpha)\frac{x^2}{4t}
\right)
\eqq
is a solution of \eqref{schrodinger-bis}.
In addition,
\begin{enumerate}
 \item[i)] If $\alpha\in [0,1)$, then $w_{\c,\alpha}(\cdot, t)$ does not converge in $S'(\R)$ as $t\to0^+$.
\item[ii)] If $\alpha\in(0,1]$, then
\bq\label{conv-phi-0}
\lim_{t\to0^+}\int_\R  w_{\c,\alpha}(x,t)\varphi(x)\,dx=0,\quad \text{  for all }\varphi \in C_0(\R\setminus\{0\}).
\eq
\end{enumerate}
\end{prop}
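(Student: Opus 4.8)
The plan is to verify directly that $w_{\c,\alpha}$ solves \eqref{schrodinger-bis} for $t>0$ (where it is smooth, so a pointwise check is enough), and then to treat the two limiting regimes for $\alpha$ separately. Throughout I would set $\mu=\beta+i\alpha$, so that $|\mu|=1$, $\beta-i\alpha=\bar\mu$ and $i\beta-\alpha=i\mu$, which streamlines the algebra. First I would record, for $w=w_{\c,\alpha}$, the elementary identities
\bqq
|w(x,t)|^2=\frac{|\c|^2}{t}\,e^{-\alpha x^2/(2t)},\qquad \partial_x w=(i\beta-\alpha)\tfrac{x}{2t}\,w,\qquad \Im(\bar w\,\partial_y w)=\frac{\beta|\c|^2}{2t^2}\,y\,e^{-\alpha y^2/(2t)}.
\eqq
The key observation is that the nonlinear potential in \eqref{schrodinger-bis} is in fact \emph{independent of $x$}: the substitution $s=\alpha y^2/(2t)$ gives $2\alpha\int_0^x\Im(\bar w\,\partial_y w)\,dy=\frac{\beta|\c|^2}{t}\big(1-e^{-\alpha x^2/(2t)}\big)$ (still valid at $\alpha=0$ by continuity), which combines with $\beta|w|^2=\frac{\beta|\c|^2}{t}e^{-\alpha x^2/(2t)}$ to leave exactly $\frac{\beta|\c|^2}{t}$. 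Hence the whole nonlinear term reduces to $\frac{\beta|\c|^2}{2t}w$.

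It then remains to check that $i\partial_t w+(\beta-i\alpha)\partial_{xx}w=-\frac{\beta|\c|^2}{2t}w$. Computing $\partial_t w$ and $\partial_{xx}w$ and using $\bar\mu(i\mu)^2=-\mu$ together with $\bar\mu(i\mu)=i$, I expect the terms quadratic in $x$ to cancel against the $\mu x^2/(4t^2)$ coming from $\partial_t$, and the two $\pm\tfrac{i}{2t}$ contributions to cancel as well, leaving precisely $-\frac{\beta|\c|^2}{2t}w$; adding the nonlinear term then yields $0$. Conceptually this is a gauge transformation: $w_{\c,\alpha}=v_{\c,\alpha}\,e^{i\Theta(t)}$ with $\Theta(t)=\frac{\beta|\c|^2}{2}\ln t$, and since a time-dependent unimodular phase leaves $|v|^2$ and $\Im(\bar v\,\partial_x v)$ unchanged while producing the extra term $-\Theta'(t)v=-\tfrac{A(t)}2 v$ in $i\partial_t$, it exactly absorbs the $A(t)$ term of \eqref{schrodinger} (with $A(t)$ as in \eqref{A(t)}) and turns that equation into \eqref{schrodinger-bis}; this gives a second, structural route to the same conclusion.

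For part \emph{ii)} the argument is short. Given $\varphi\in C_0(\R\setminus\{0\})$, I would choose $\eta>0$ with $\supp\varphi\subseteq\{|x|\ge\eta\}$; since $|w_{\c,\alpha}(x,t)|=\frac{|\c|}{\sqrt t}e^{-\alpha x^2/(4t)}$ and $\alpha>0$,
\bqq
\Big|\int_\R w_{\c,\alpha}(x,t)\varphi(x)\,dx\Big|\le \frac{|\c|}{\sqrt t}\,e^{-\alpha\eta^2/(4t)}\norm{\varphi}_{L^1},
\eqq
and the right-hand side tends to $0$ as $t\to0^+$ because the Gaussian factor dominates $t^{-1/2}$; this is exactly \eqref{conv-phi-0}.

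The main obstacle is part \emph{i)}, where $\beta>0$ and one has to \emph{rule out} convergence in $S'(\R)$. My plan is to test against the fixed Gaussian $\varphi(x)=e^{-x^2}\in S(\R)$, for which the pairing is an explicit (convergent, since $\Re[1+(\alpha-i\beta)/(4t)]=1+\alpha/(4t)>0$) Gaussian integral:
\bqq
\int_\R w_{\c,\alpha}(x,t)\,e^{-x^2}\,dx=\c\,t^{\,i\beta|\c|^2/2}\sqrt{\frac{4\pi}{4t+\alpha-i\beta}}.
\eqq
As $t\to0^+$ the radical tends to the nonzero constant $\sqrt{4\pi/(\alpha-i\beta)}$, whereas $t^{\,i\beta|\c|^2/2}=\exp\!\big(i\tfrac{\beta|\c|^2}2\ln t\big)$ has modulus $1$ and argument $\frac{\beta|\c|^2}2\ln t\to-\infty$ (note $\beta|\c|^2\ne0$ since $\alpha<1$ and $\c\ne0$), so it winds around the unit circle without limit. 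Hence $\langle w_{\c,\alpha}(\cdot,t),e^{-x^2}\rangle$ does not converge, and therefore neither does $w_{\c,\alpha}(\cdot,t)$ in $S'(\R)$. The only delicate point is the branch of the complex square root, but this is immaterial to the conclusion, which uses only that the radical has a nonzero limit while the unimodular factor oscillates.
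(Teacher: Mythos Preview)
Your proof is correct. The overall strategy matches the paper's---isolate the oscillating factor $t^{\,i\beta|\c|^2/2}$ for part \emph{i)} and exploit the Gaussian concentration at the origin for part \emph{ii)}---but your technical choices differ and are in fact somewhat more elementary. For \emph{i)} the paper works via Parseval with a generic $\varphi\in S(\R)$ and the explicit Fourier transform $\widehat w_{\c,\alpha}$, whereas you test against the single Gaussian $e^{-x^2}$ and compute the resulting Gaussian integral directly; both isolate the same non-convergent unimodular factor, and your explicit check that the radical has a nonzero limit is clean (note it works even at $\alpha=0$, since $\alpha-i\beta\ne0$). For \emph{ii)} the paper uses the self-similar change of variables $x=\sqrt{t}\,y$ together with dominated convergence and $\varphi(0)=0$, while your pointwise tail bound $\frac{|\c|}{\sqrt t}e^{-\alpha\eta^2/(4t)}\|\varphi\|_{L^1}$ is shorter and avoids DCT altogether. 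Your verification that the nonlinear potential collapses to the constant $\beta|\c|^2/t$ (and the gauge interpretation relating \eqref{schrodinger} to \eqref{schrodinger-bis}) is more explicit than the paper's ``straightforward computation'' and is a nice structural explanation.
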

\begin{proof}
A straightforward computation shows that $w_{\c,\alpha}$ satisfies \eqref{schrodinger-bis}.
The proof that  $w_{\c,\alpha}(\cdot, t)$ does not converge in $S'(\R)$ as $t\to0^+$ if $\c\neq 0$
is the same as in \cite{kenig-ponce-vega}. Indeed, for $\vp\in S(\R)$,
by Parseval's theorem,
\begin{align*}
\int_{\R} \bar w_{\c,\alpha}(x,t)\vp(x)\,dx&=
\frac{1}{2\pi}\int_{\R} \overline{\wh w}_{\c,\alpha}(\xi,t)  \wh\vp(\xi)\,d\xi\\
&=\frac{\bar \c e^{-i\beta\abs{\c}^2\ln(t)/2}}{\sqrt{\pi }}\overline{\sqrt{\alpha+i\beta}}  \int_\R e^{-(\alpha+i\beta)\xi^2t} {\wh\vp}(\xi)\,d\xi.
\end{align*}
By the dominated convergence theorem, the last integral converges:
 \bqq
 \lim_{t\to 0^+}\int_\R e^{-(\alpha+i\beta)\xi^2t} {\wh\vp}(\xi)\,d\xi=\int_\R  {\wh\vp}(\xi)\,d\xi=2\pi \vp(0).
 \eqq
Since $\beta\neq 0$, $e^{-i\beta\abs{\c}^2\ln(t)/2}$
does not admit a limit at $0$ in $S'(\R)$. We conclude that $w_{\c,\alpha}(\cdot, t)$ does not converge in $S'(\R)$
as $t\to0^+$.

It remains to prove \eqref{conv-phi-0}. Since now $\vp \in C_0(\R\setminus\{0\})$, we cannot proceed as before.
However, using the change of variables $x=\sqrt{t}y$, we have
\bqq
\lim_{t\to0^+}\int_\R  w_{\c,\alpha}(x,t)\varphi(x)\,dx=\c e^{i\beta\abs{\c}^2\ln(t)/2}\int_\R e^{(-\alpha+i\beta )y^2/4} \vp(\sqrt{t}y)\,dy.
\eqq
Therefore, since $\alpha>0$ and $\varphi(0)=0$, the  dominated convergence theorem implies that
\bqq
 \lim_{t\to 0^+}\int_\R e^{(-\alpha+i\beta )y^2/4} \vp(\sqrt{t}y)\,dy=\vp(0)\int_\R e^{(-\alpha+i\beta )y^2/4}\,dy =0.
 \eqq
 Since $\abs{e^{i\beta\abs{\c}^2\ln(t)/2}}=1$, we obtain \eqref{conv-phi-0}.
\end{proof}
The results in Proposition~\ref{prop-NLS} lead to the following remarks:
\begin{itemize}
\item[1. ] Observe that if $\alpha\in(0,1)$, $w_{\c,\alpha}$ provides a solution to the dissipative equation  \eqref{schrodinger-bis}.
Moreover, form part {\it{(ii)}}  in Proposition~\ref{prop-NLS}, $w_{\c,\alpha}$ satisfies the condition
\eqref{cond-brezis}. However, notice that  $w_{\c,\alpha}$ cannot be extended  to $C^{2,1}(\R\times [0,\infty))$
due to the presence of a logarithmic oscillation. This is in contrast with the properties for solutions of the cubic heat equation \eqref{heat-eq} established in Theorem~\ref{brezis-friedman}.
\item[2. ] In the case $\alpha=0$, equation \eqref{schrodinger-bis}
corresponds to \eqref{cubic-NLS} with $p=2$, i.e.\
to the equation cubic NLS equation that is invariant under the Galilean transformation. 
The proof of the ill-posedness result given in Theorem~\ref{thm-KPV} relies on this invariance and part {\it{(i)}} of Proposition~\ref{prop-NLS} with $\alpha=0$. 
Although when $\alpha>0$, equation \eqref{schrodinger-bis} is no longer invariant under the Galilean transformation, part {\it{(i)}}
of Proposition~\ref{prop-NLS} could be an indicator that
that the Cauchy problem \eqref{schrodinger-bis} with a delta as initial condition is still
ill-posed. This question rests open for the moment and it seems that the use of \eqref{schrodinger} (or \eqref{schrodinger-bis})
can be more difficult to formulate a Cauchy theory for \eqref{LLG} including self-similar solutions.
\end{itemize}

\section{Appendix}\label{appendix}
\setcounter{section}{1}
\def\thesection{\Alph{section}}
 \setcounter{equation}{0}
\setcounter{thm}{0}

\renewcommand\thethm{A.\arabic{thm}}

The characterization of $BMO_1^{-1}(\R^N)$ as sum of derivatives of functions in BMO
was proved by Koch and Tataru in \cite{koch-tataru}. A straightforward generalization of their proof
leads to the following characterization of $BMO_\alpha^{-1}(\R^N)$.

\begin{thm}\label{tataru}
Let $\alpha\in (0,1]$ and $f\in S'(\R^N)$.  Then $f\in BMO_\alpha^{-1}(\R^N)$
if and only if there exist $f_1,\dots,f_N\in BMO_\alpha(\R^N)$ such that $f=\sum_{j=1}^N \partial_j f_j$.
In addition, if such a decomposing holds, then 
$$\norm{f}_{BMO^{-1}_\alpha} \lesssim \sum_{j=1}^N [f_j]_{BMO_\alpha}.$$
\end{thm}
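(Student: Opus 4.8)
The plan is to follow the argument of Koch and Tataru for $BMO^{-1}=BMO_1^{-1}$ in \cite{koch-tataru} (see also \cite[Chapter~16]{lemarie}), checking that the only properties of the heat semigroup used there are shared by $S_\alpha(t)=e^{(\alpha+i\beta)t\Delta}$. The two relevant facts are the commutation $S_\alpha(t)\partial_j=\partial_j S_\alpha(t)$ and Gaussian bounds on the kernel $G_\alpha$ and its derivatives: since $\Re(\alpha+i\beta)=\alpha>0$, one has $|G_\alpha(x,t)|\lesssim t^{-N/2}e^{-\alpha|x|^2/(4t)}$ and $|\nabla G_\alpha(x,t)|\lesssim t^{-(N+1)/2}e^{-\alpha|x|^2/(4t)}$, which have exactly the same shape as the real heat kernel up to the factor $\alpha$ in the exponent. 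These replace the positivity of the heat kernel used in \cite{koch-tataru}.

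First I would prove sufficiency together with the quantitative estimate. If $f=\sum_{j=1}^N\partial_j f_j$ with $f_j\in BMO_\alpha$, then by commutation $S_\alpha(t)f=\sum_j\partial_j S_\alpha(t)f_j$, so $|S_\alpha(t)f|^2\le N\sum_j|\nabla S_\alpha(t)f_j|^2$. Integrating over $Q_r(x)$ and dividing by $r^N$ gives
\[
\frac1{r^N}\int_{Q_r(x)}|S_\alpha(t)f|^2\,dt\,dy\le N\sum_{j=1}^N\frac1{r^N}\int_{Q_r(x)}|\nabla S_\alpha(t)f_j|^2\,dt\,dy\le N\sum_{j=1}^N[f_j]_{BMO_\alpha}^2,
\]
and taking the supremum over $x$ and $r$ yields $\norm{f}_{BMO_\alpha^{-1}}\lesssim\sum_j[f_j]_{BMO_\alpha}$.

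For necessity I would construct the primitives explicitly. Since $\partial_t S_\alpha(t)f=(\alpha+i\beta)\Delta S_\alpha(t)f$ and $S_\alpha(t)f\to0$ as $t\to\infty$, integrating in time and writing $\Delta=\sum_j\partial_j^2$ formally gives $f=-(\alpha+i\beta)\sum_j\partial_j\int_0^\infty\partial_j S_\alpha(s)f\,ds$. I would therefore set $f_j:=-(\alpha+i\beta)\int_0^\infty\partial_j S_\alpha(s)f\,ds$, interpreting the integral through a truncation at $s=T$ (subtracting a constant to compensate for the lack of absolute convergence as $s\to\infty$) and passing to the limit, the contribution of $S_\alpha(T)f$ vanishing in $S'$ by the decay and $L^1$ properties of $G_\alpha$. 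It then remains to show $f_j\in BMO_\alpha$, equivalently $f_j\in BMO$ by \eqref{equiv-BMO}. Using $S_\alpha(t)f_j=-(\alpha+i\beta)\int_0^\infty\partial_j S_\alpha(t+s)f\,ds$ together with the Gaussian bounds above, one verifies that $t\,|\nabla S_\alpha(t)f_j|^2\,dx\,dt$ is a Carleson measure controlled by $\norm{f}_{BMO_\alpha^{-1}}$; the Carleson-measure characterization of BMO (\cite[Chapter~4]{stein}, \cite[Chapter~10]{lemarie}) then gives $[f_j]_{BMO}\lesssim\norm{f}_{BMO_\alpha^{-1}}<\infty$.

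The main obstacle is this last step, the BMO bound on the primitives. In the heat case positivity of the kernel is convenient; here $G_\alpha$ is complex and oscillatory, so I would rely instead on the modulus bounds with decay rate $\alpha$ and on the semigroup identity $S_\alpha(t+s)=S_\alpha(t)S_\alpha(s)$ to split the time integral into a near-diagonal part $s\lesssim r^2$ and a tail $s\gtrsim r^2$, estimating each through the Carleson condition satisfied by $f$. Because the Gaussian bounds have exactly the same form as for $\alpha=1$, every estimate in \cite{koch-tataru} should transfer after inserting the harmless factor $\alpha$, which is why only a straightforward generalization is needed; genuine care is required only in justifying the convergence of the defining time integral for $f_j$.
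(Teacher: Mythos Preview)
Your proposal is correct and matches the paper's approach exactly: the paper gives no detailed proof of this theorem, merely stating that ``a straightforward generalization of [the Koch--Tataru] proof'' yields the result, and your sketch is precisely that generalization, relying (as the paper does elsewhere, e.g.\ in the proof of Lemma~\ref{lemma0}) on the Gaussian modulus bounds for $G_\alpha$ in place of positivity. One small slip: in the last paragraph you write that $t\,|\nabla S_\alpha(t)f_j|^2\,dx\,dt$ should be a Carleson measure, but with the heat-type semigroup parametrized by $t$ (rather than the Poisson extension) the correct Carleson condition --- and the one matching the paper's definition of $[\,\cdot\,]_{BMO_\alpha}$ --- is for $|\nabla S_\alpha(t)f_j|^2\,dx\,dt$, without the extra factor of $t$.
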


The next results provide the equivalence between  the weak solutions and the Duhamel formulation. We first need to introduce for $T>0$ the space
$L^1_{\uloc}(\R^N\times (0,T))$ defined as the space of measurable functions on $\R^N\times (0,T)$ such that the norm
\bqq
\norm{f}_{\uloc,T}:=\sup_{x_0\in \R^N}\int_{B(x_0,1)}\int_0^T\abs{f(y,t)}\,dt\,dy
\eqq
is finite. We refer the reader to Lemari\'e--Rieusset's book \cite{lemarie} for more details about these kinds of spaces.
In particular, we recall the following result corresponding to Lemma 11.3 in \cite{lemarie} in the case $\alpha=1$.
It is straightforward to check that the same proof still applies if $\alpha\in(0,1)$.
\begin{lemma}\label{lemma-equiv}
Let $\alpha\in (0,1]$, $T\in(0,\infty)$ and $w\in L^1_{\uloc}(\R^N\times (0,T))$.  Then the function
$$W(x,t):=\int_0^tS_{\alpha}(t-s)w(x,s)\,ds$$ is well defined and belongs to $L^1_{\uloc}(\R^N\times (0,T))$.
Moreover,
$$i\partial_{t}W+(\beta-i\alpha)\Delta W=w \quad \text{ in }\quad \mathcal \boD'(\R^N\times \R^+),$$
and the application
\begin{align*}
 [0,T]\ &\to \quad  \R\\
  t\quad &\mapsto \ \norm{W(\cdot,t)}_{L^1(B_1(x_0))}
 \end{align*}
is continuous for any $x_0\in\R$, with $\norm{W(\cdot,t)}_{L^1(B_1(x_0))}\to 0$, as $t\to 0^+$, uniformly in $x_0$.
\end{lemma}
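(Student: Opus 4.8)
The plan is to follow the proof of Lemma 11.3 in \cite{lemarie}, which treats the case $\alpha=1$, and to verify that the only property of the heat kernel used there is a pointwise Gaussian bound that the kernel $G_\alpha$ also enjoys. The starting observation is that, since $|\alpha+i\beta|=1$ and $1/(\alpha+i\beta)=\alpha-i\beta$, the kernel has the explicit modulus
$$
|G_\alpha(x,t)|=\frac{e^{-\alpha|x|^2/(4t)}}{(4\pi t)^{N/2}},\qquad\text{so that}\qquad \norm{G_\alpha(\cdot,t)}_{L^1}=\alpha^{-N/2}\quad\text{for all }t>0.
$$
Thus $|G_\alpha(\cdot,t)|$ is, up to the fixed factor $\alpha^{-N/2}$, a genuine Gaussian with diffusion constant $\alpha$, and every Gaussian tail estimate used in \cite{lemarie} applies verbatim once this constant is inserted. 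I would record this bound first and use it throughout.

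For the well-definedness and the $L^1_{\uloc}$ bound, first I would write, for a.e.\ $(y,t)$,
$$
|W(y,t)|\le \int_0^t\big(|G_\alpha(\cdot,t-s)|\ast|w(\cdot,s)|\big)(y)\,ds,
$$
and then estimate $\int_{B_1(x_0)}|W(y,t)|\,dy$ by decomposing the convolution over the unit cubes $B_1(k)$, $k\in\Z^N$. The Gaussian decay of $|G_\alpha|$ makes the weights $\sup_{y\in B_1(x_0)}\int_{B_1(k)}|G_\alpha(y-z,\tau)|\,dz$ summable in $k$, with a bound uniform in $\tau\in(0,T)$; this is exactly a Schur test. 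Integrating the resulting estimate in $t$ over $(0,T)$ against the finite quantity $\norm{w}_{\uloc,T}$ then yields $\norm{W}_{\uloc,T}<\infty$, with a constant uniform in $x_0$. I expect this step to be the technical heart of the lemma, since it is where one must ensure uniformity in $x_0$ and time-integrability simultaneously.

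The identity $i\partial_t W+(\beta-i\alpha)\Delta W=w$ in $\boD'(\R^N\times\R^+)$ I would obtain from the generator property $\partial_\tau S_\alpha(\tau)\phi=(\alpha+i\beta)\Delta S_\alpha(\tau)\phi$ together with the Duhamel structure of $W$. Differentiating under the integral sign (justified against test functions by the $L^1_{\uloc}$ bound just established) produces the source term $w$ from the endpoint $s=t$ and the term $(\alpha+i\beta)\Delta W$ from the integrand, giving the Duhamel identity $\partial_t W=w+(\alpha+i\beta)\Delta W$, which is the form of the equation claimed in the lemma after using $\beta-i\alpha=-i(\alpha+i\beta)$.

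Finally, for the continuity of $t\mapsto\norm{W(\cdot,t)}_{L^1(B_1(x_0))}$ and the uniform limit as $t\to0^+$, I would note that the same cube decomposition bounds $\norm{W(\cdot,t)}_{L^1(B_1(x_0))}$ by a uniform constant times $\sup_{x_0}\int_{B_1(x_0)}\int_0^t|w(z,s)|\,ds\,dz$; the latter tends to $0$ as $t\to0^+$ uniformly in $x_0$, because $\norm{w}_{\uloc,T}<\infty$ forces absolute continuity of the time integral, yielding the uniform convergence to $0$. Continuity at interior times $t\in(0,T]$ then follows from dominated convergence applied to the convolution representation, with the Gaussian modulus of $G_\alpha$ serving as the integrable majorant. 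The recurring obstacle in all four steps is guaranteeing that every constant is uniform in $x_0\in\R^N$ and integrable in time; once the Gaussian modulus of $G_\alpha$ is available, however, this is a routine but careful transcription of Lemari\'e's argument.
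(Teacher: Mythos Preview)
Your proposal is correct and matches the paper's approach exactly: the paper does not give a detailed proof but simply cites Lemma~11.3 in \cite{lemarie} for the case $\alpha=1$ and remarks that the same argument goes through for $\alpha\in(0,1)$, which is precisely what you carry out via the Gaussian modulus bound $|G_\alpha(x,t)|=(4\pi t)^{-N/2}e^{-\alpha|x|^2/(4t)}$. One minor point: your Duhamel computation $\partial_t W=w+(\alpha+i\beta)\Delta W$ actually yields $i\partial_t W+(\beta-i\alpha)\Delta W=iw$ rather than $w$, so the equation in the lemma's statement carries a harmless factor of $i$ (compare with \eqref{eq-energy} in the proof of Lemma~\ref{lemmaT}).
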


Following the ideas in \cite{lemarie},
we can establish now the equivalence between the notions of solutions as well as the regularity.

\begin{thm}\label{thm-equiv} Let  $\alpha\in (0,1]$ and  $u\in X(\R^N\times \R^+;\C)$. Then the following assertions are
equivalent:
\begin{enumerate}
 \item[i)] The function $u$ satisfies
 \bq\label{eq-in-D}
 iu_t +(\beta-i\alpha)\Delta u=2(\beta-i\alpha)\frac{\bar u (\grad u)^2}{1+\abs{u}^2}\quad \text{ in }\quad \boD'(\R^N\times \R^+).
 \eq
 \item[ii)] There exists $u^0\in \boS'(\R^N)$ such that $u$ satisfies
 $$u(t)=S_\alpha(t) u^0
 -2(\beta-i\alpha) \int_0^t S_\alpha(t-s)\frac{\bar u (\grad u)^2}{1+\abs{u}^2}\,ds.$$
 \end{enumerate}
 Moreover, if (ii) holds, then $u\in C^\infty(\R^N\times \R^+)$ and
 \bq\label{lemma-CI}
\norm{ (u(t)-u^0)\vp}_{L^1(\R^N)}\to 0, \quad \textup{ as }t\to 0^+,
\eq
for any $\vp\in \boS(\R^N)$.
\end{thm}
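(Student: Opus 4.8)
The plan is to prove the two implications by isolating a single common ingredient and then to handle regularity and the initial trace separately, following the scheme Lemari\'e uses for the heat equation. The common ingredient is the nonlinear source $w=g(u)=-2i(\beta-i\alpha)\bar u(\grad u)^2/(1+|u|^2)$. First I would check that $w\in L^1_{\uloc}(\R^N\times(0,T))$ for every $T>0$: this is immediate from Lemma~\ref{lemma-g}, since $\|g(u)\|_Y\le[u]_X^2<\infty$ and, covering $(0,T)$ by finitely many parabolic cylinders of a fixed radius, the $Y$-norm dominates $\sup_{x_0}\int_{B_1(x_0)}\int_0^T|w|$. With this in hand, Lemma~\ref{lemma-equiv} applies to $w=g(u)$, so that $W(t):=\int_0^t S_\alpha(t-s)g(u)(s)\,ds$ is well defined, belongs to $L^1_{\uloc}$, solves the inhomogeneous linear equation with source $w$ in $\boD'(\R^N\times\R^+)$, and satisfies $\|W(\cdot,t)\|_{L^1(B_1(x_0))}\to0$ as $t\to0^+$ uniformly in $x_0$. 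Note that $W$ is exactly the Duhamel term $T(g(u))$ appearing in \eqref{duhamel}.

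For the direction $(ii)\Rightarrow(i)$ I would simply observe that $v(t)=S_\alpha(t)u^0$ solves the homogeneous equation $i\partial_t v+(\beta-i\alpha)\Delta v=0$ in $\boD'$, so writing $u=S_\alpha(t)u^0+W$ and adding the two equations shows that $u$ satisfies \eqref{eq-in-D}. For the converse $(i)\Rightarrow(ii)$ I would first use that $i\partial_t+(\beta-i\alpha)\Delta$ is, for $\alpha>0$, a parabolic (hence hypoelliptic) operator: applied to \eqref{eq-in-D}, together with the local boundedness of $u$ and $\grad u$ coming from $u\in X$, a standard parabolic bootstrap upgrades $u$ to a classical solution, smooth on $\R^N\times\R^+$. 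The equation then holds classically, so for every $0<\ve<t$ the variation-of-constants formula gives $u(t)=S_\alpha(t-\ve)u(\ve)+\int_\ve^t S_\alpha(t-s)g(u)(s)\,ds$. Letting $\ve\to0^+$, the integral converges to $W(t)$ because $g(u)\in L^1_{\uloc}$, while the uniform $L^\infty$ bound $\sup_{t>0}\|u(t)\|_{L^\infty}<\infty$ from $X$ lets one extract a weak-$*$ limit $u^0\in L^\infty(\R^N)\subset\boS'(\R^N)$ of $u(\ve)$, so that $S_\alpha(t-\ve)u(\ve)\to S_\alpha(t)u^0$ in $\boD'$; this yields the integral identity in $(ii)$.

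The main obstacle is making the trace/uniqueness step rigorous, i.e. showing that the weak-$*$ limit $u^0$ is unique and that $S_\alpha(t-\ve)u(\ve)$ genuinely converges to $S_\alpha(t)$ of a single distribution rather than depending on the chosen subsequence. I would handle this by setting $v:=u-W$, which solves the homogeneous equation in $\boD'$ and, as $t\to0^+$, differs from $u$ only by $W(t)\to0$ in $L^1_{\loc}$; on the Fourier side the homogeneous equation reduces to $\partial_t\widehat v=-(\alpha+i\beta)|\xi|^2\widehat v$, whose only solution bounded in $\boS'$ down to $t=0$ is $\widehat v(\xi,t)=e^{-(\alpha+i\beta)|\xi|^2 t}\,\widehat{v^0}(\xi)$, that is $v=S_\alpha(t)v^0$ with $v^0=u^0$ uniquely determined. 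The same hypoelliptic/bootstrap mechanism, applied now to the integral equation via the smoothing of $S_\alpha$ and the quadratic-gradient structure of $g$, also delivers the regularity statement $u\in C^\infty(\R^N\times\R^+)$ under hypothesis $(ii)$.

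Finally, for the initial-data convergence I would decompose $u(t)-u^0=(S_\alpha(t)u^0-u^0)+W(t)$. The term $W(t)$ tends to $0$ in $L^1_{\loc}$ by Lemma~\ref{lemma-equiv}, so $\|W(t)\vp\|_{L^1}\to0$ for every $\vp\in\boS(\R^N)$. For the first term I would use that $u^0\in L^\infty$ and that $G_\alpha(\cdot,t)$ is an approximate identity: it has the dilation structure $G_\alpha(x,t)=t^{-N/2}G_\alpha(x/\sqrt t,1)$, satisfies $\int_{\R^N}G_\alpha(\cdot,t)=1$ and $\sup_{t>0}\|G_\alpha(\cdot,t)\|_{L^1}=\alpha^{-N/2}<\infty$. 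A routine approximate-identity argument then gives $S_\alpha(t)u^0\to u^0$ in $L^1_{\loc}$, hence $\|(S_\alpha(t)u^0-u^0)\vp\|_{L^1}\to0$, and combining the two contributions yields \eqref{lemma-CI}.
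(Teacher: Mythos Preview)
Your proposal is correct and follows essentially the same route as the paper: you verify $g(u)\in L^1_{\uloc}$ (the paper does this by splitting $T\le1$ and $T>1$ and using $[u]_X$ directly rather than your cylinder covering, but the point is the same), invoke Lemma~\ref{lemma-equiv} for the Duhamel term $W$, use parabolic $L^p$-regularity and bootstrap for smoothness, and decompose $u(t)-u^0=(S_\alpha(t)u^0-u^0)+W(t)$ for the trace, handling each piece exactly as you describe. The paper simply defers the direction $(i)\Rightarrow(ii)$ to \cite[Theorem~11.2]{lemarie}, while you sketch it via weak-$*$ compactness of $u(\ve)$ in $L^\infty$ together with a Fourier-side uniqueness argument for the homogeneous equation; this is a reasonable way to fill in that reference.
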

\begin{proof}
In view of Lemma~\ref{lemma-equiv},  we need to prove that the function
$$g(u)=-2(\beta-i\alpha) \frac{\bar u (\grad u)^2}{1+\abs{u}^2}$$
belongs to $L^1_{\uloc}(\R^N\times (0,T))$, for all $T>0$. Indeed, by \eqref{est-g} we have
\bq\label{appen-1}
\norm{g(u)}_{\uloc,T}\leq \norm{\abs{\grad u}^2}_{\uloc,T}.
\eq
If $T\leq 1$, then
\bq\label{appen-2}
\norm{\abs{\grad u}^2}_{\uloc,T}\leq \sup_{x_0\in \R^N}\int_{Q_1(x_0)}\abs{\grad u(y,t)}^2 \,dt\,dy\leq \norm{u}_X^2.
\eq
If $T\geq 1$, using that
\bqq
\abs{\grad u}\leq \frac{[u]_{X}}{\sqrt t},\quad \text{ for any }t>0,
\eqq
we get
\bq\label{appen-3}
\begin{aligned}
\norm{\abs{\grad u}^2}_{T,\uloc}
&\leq \sup_{x_0\in \R^N}\int_{Q_1(x_0)}\abs{\grad u(y,t)}^2 \,dt\,dy+\sup_{x_0\in \R^N}\int_1^T\int_{B_1(x_0)} \abs{\grad u}^2\,dy \,dt\\
&\leq \norm{u}_X^2+[u]_X^2\abs{B_1(0)}\int_1^T \frac1t \,dt \\
&\leq  \norm{u}_X^2 (1+\abs{B_1(0)}\ln(T)).
\end{aligned}
\eq
In conclusion, we deduce from \eqref{appen-1}, \eqref{appen-2} and \eqref{appen-3}
that $g(u)\in  L^1_{\uloc}(\R^N\times (0,T))$ and then
it follows from Lemma~\ref{lemma-equiv} that (ii) implies (i).
The other implication can be established as in \cite[Theorem 11.2]{lemarie}.
Moreover, we deduce that the function
$$W(x,t):=T(g(u))(x,t)=\int_0^t S_{\alpha}(t-s)g(u)\,ds$$
satisfies  $\norm{W(\cdot,t)}_{L^1(B_1(x_0))}\to 0$, as $t\to 0^+$, uniformly in $x_0\in \R^N$.
Let us take $\vp\in S(\R^N)$ and a constant $C_\vp>0$ such that
$\abs{\vp(x)}\leq C_\vp (2+\abs{x})^{-N-1}$. Then
\begin{align*}
\int_{\R^N} \abs{\vp(y)W(y,t)}\,dy &\leq \sum_{k\in \Z^N}\int_{B_1(k)} \frac {C_\vp}{(2+\abs{x})^{N+1}}\abs{W(y,t)}\,dy\\
&\leq \sup_{x_0\in \R^N}  \norm{W(\cdot,t)}_{L^1(B_1(x_0))} \sum_{k\in \Z^N} \frac {C_\vp}{(1+\abs{k})^{N+1}},
\end{align*}
so that
$\norm{\vp W(\cdot,t)}_{L^1(\R^N)}\to 0$ as $t\to 0^+$, i.e.
 \bq\label{proof-cont1}
 \norm{(u(t)-S_\alpha(t)u^0)\vp}_{L^1(\R^N)}\to 0, \quad \textup{ as }t\to 0^+.
 \eq
On the other hand,  since $u^0\in L^\infty(\R^N)$,
\bq\label{limit0}
\norm{S_{\alpha}(t)u^0-u^0}_{L^1(B_r(0))}\to 0, \quad \textup{ as }t\to 0^+,
\eq
for any $r>0$ (see e.g. \cite[Corollary 2.4]{arrieta}). Given $\epsilon>0$, we fix $r_\epsilon>0$ such that
$$2\norm{u^0}_\infty\norm{\vp}_{L^1(B^c_{r_\epsilon}(0))}\leq \epsilon.$$
Using \eqref{limit0}, we obtain
\bqq
\lim_{t\to0^+}\norm{ (S_{\alpha}(t)u^0-u^0)\vp}_{L^1(B_{r_\epsilon}(0))}=0.
\eqq
Then, passing to limit in the inequality
\begin{align}\label{proof-CI}
\norm{ (S_{\alpha}(t)u^0-u^0)\vp}_{L^1(\R^N)}
\leq \norm{ (S_{\alpha}(t)u^0-u^0)\vp}_{L^1(B_{r_\epsilon}(0))}+2\norm{u^0}_{L^\infty(\R^N)}\norm{\vp}_{L^1(B^c_{r_\epsilon}(0))},
\end{align}
we obtain
\bq\label{proof-cont2}
\limsup_{t\to 0^+}\norm{ (S_{\alpha}(t)u^0-u^0)\vp}_{L^1(\R^N)}\leq \epsilon.
\eq
Therefore
$$\lim_{t\to 0^+}\norm{ (S_{\alpha}(t)u^0-u^0)\vp}_{L^1(\R^N)}=0.$$
Combining with  \eqref{proof-cont1},  we conclude  the proof of \eqref{lemma-CI}.

It remains to prove that $u$ is smooth for $t>0$.
Since  $u\in X(\R^N\times \R^+;\C)$, we get that
 $u,\grad u\in  L^\infty_{\loc}(\R^N\times \R^+)$. Then $g(u)\in L^2_\loc(\R^N\times\R^+)$
so the $L^p$-regularity theory for parabolic equations implies that a  function $u$
satisfying \eqref{eq-in-D} belongs to $u\in H_{\loc}^{2,1}(\R^N\times\R^+)$ (see \cite{lieberman,lady-parabolic} and \cite[Remark 48.3]{quittner}
for notations and more details). Since  the space $H^k\cap L^\infty$ is stable under multiplication (see e.g.\ \cite[Chapter 6]{Horm97}),
we can use a bootstrap argument to conclude that $u\in C^\infty(\R^N\times \R^+)$.
\end{proof}

\begin{remark}
Several authors have studied further properties  of the solutions found by Koch and Tataru for the Navier--Stokes equations.
For instance,  analyticity,   decay rates of the higher-order derivatives in space and time have been investigated by
Miura and Sawada~\cite{miura}, Germain, Pavlovi{\'c} and Staffilani~\cite{germain-pav}, among others.
A similar analysis for the solution $u$ of \eqref{DNLS} is beyond the scope of this paper, but
it can probably be performed  using the same arguments given in  \cite{miura,germain-pav}.
\end{remark}

We end this appendix with some properties of the self-similar found in \cite{gutierrez-delaire}.
\begin{thm}[\cite{gutierrez-delaire}]\label{thm-self} Let $N=1$. For every $\alpha\in[0,1]$ and  $c>0$, there exists
a profile $\f_{c,\alpha}\in C^\infty(\R,\S^2)$ such that
$$
\m_{c, \alpha}(x,t) =\f_{c,\alpha}\left( \frac{x}{\sqrt{t}}  \right),   \qquad \text{ for all }(x,t)\in \R\times \R^+,
$$
is a  smooth solution of \eqref{LLG}  on $\R\times \R^+$.
Moreover,
\begin{enumerate}[label=({\roman*}),ref={{({\roman*})}}]
\item\label{converge} There exist unitary vectors $\A^{\pm}_{c,\alpha}=(A_{j,c,\alpha}^{\pm})_{j=1}^{3} \in \S^2$ such that the following pointwise convergence holds when $t$ goes to zero:
\bq\label{convergencia} \lim_{t\to
 0^+}\m_{c,\alpha}(x,t)=\begin{cases}
 \A^+_{c,\alpha}, &\text{ if }x>0,\\[2ex]
 \A^-_{c,\alpha}, &\text{ if }x<0,
\end{cases}
\eq
and $\A^-_{c,\alpha}=(A_{1,c,\alpha}^+,-A_{2,c,\alpha}^+,-A_{3,c,\alpha}^+)$.
\item\label{rate} There exists a constant $C(c,\alpha,p)$, depending only on $c$, $\alpha$ and $p$ such that for
all $t>0$
\bq\label{cota-m} \|\m_{c,\alpha}(\cdot,t)-
\A^+_{c,\alpha}\chi_{(0,\infty)}(\cdot)-
\A^{-}_{c,\alpha}\chi_{(-\infty,0)}(\cdot)\|_{L^p(\R)}\leq
 C(c,\alpha,p) t^\frac{1}{2p}, \eq for all $p\in (1,\infty)$. In
addition, if $\alpha>0$, \eqref{cota-m} also holds for $p=1$.
\item\label{item-derivada} For $t>0$ and $x\in \R$, the derivative in space satisfies
\bq\label{derivada}
\abs{\partial_{x}\m_{c,\alpha} (x,t)}=\frac{c}{\sqrt t}e^{-\frac{\alpha x^2}{4t}}.
\eq
\item\label{cotas-A}
Let $\alpha\in [0,1]$.
Then $\A^+_{c,\alpha}\to (1,0,0)$ as $c\to 0^+$.
\end{enumerate}
\end{thm}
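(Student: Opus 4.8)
The plan is to reduce the construction of $\m_{c,\alpha}$ to an explicitly solvable problem for its curvature and torsion via the Serret--Frenet formalism, and then to read off the stated properties. Inserting the self-similar ansatz $\m(x,t)=\f(x/\sqrt t)$ into \eqref{LLG} and setting $s=x/\sqrt t$, the profile must satisfy (compare Corollary~\ref{cor-self-similar} for $N=1$)
\bq\label{sketch-profile}
-\tfrac12\, s\,\f'(s)=\beta\,\f(s)\times \f''(s)-\alpha\,\f(s)\times(\f(s)\times \f''(s)),\qquad s\in\R,
\eq
together with $\abs{\f}=1$. Viewing $\f$ as the unit tangent of a curve in $\R^3$, I introduce the Frenet trihedron $\{\f,\nn,\bb\}$ with curvature $\cc(s)=\abs{\f'(s)}$ and torsion $\tau(s)$, so that $\f'=\cc\,\nn$, $\nn'=-\cc\,\f+\tau\,\bb$ and $\bb'=-\tau\,\nn$. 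Writing $\f''=-\cc^2\f+\cc'\,\nn+\cc\tau\,\bb$ and projecting \eqref{sketch-profile} onto the frame, the $\f$-component is trivially satisfied, while the $\nn$- and $\bb$-components give the first-order system $\alpha\cc'-\beta\cc\tau=-\tfrac12 s\cc$ and $\beta\cc'+\alpha\cc\tau=0$. Using $\alpha^2+\beta^2=1$, this system is solved explicitly by
\bq\label{sketch-ct}
\cc(s)=c\,e^{-\alpha s^2/4},\qquad \tau(s)=\tfrac{\beta}{2}\,s,
\eq
for an arbitrary constant $c>0$.

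Conversely, I would then \emph{define} $\f_{c,\alpha}$ by taking \eqref{sketch-ct} and solving the linear Serret--Frenet system with initial data $\f(0)=(1,0,0)$, $\nn(0)=(0,1,0)$, $\bb(0)=(0,0,1)$. Since the coefficient matrix is skew-symmetric, the frame stays orthonormal and the solution is global and smooth; reversing the projection above shows that this $\f_{c,\alpha}$ solves \eqref{sketch-profile}, so that $\m_{c,\alpha}(x,t)=\f_{c,\alpha}(x/\sqrt t)$ is a smooth solution of \eqref{LLG}. Item~\ref{item-derivada} follows at once, since $\partial_x\m_{c,\alpha}(x,t)=t^{-1/2}\f'_{c,\alpha}(x/\sqrt t)$ and $\abs{\f'_{c,\alpha}(s)}=\cc(s)=c\,e^{-\alpha s^2/4}$, whence $\abs{\partial_x\m_{c,\alpha}}=\tfrac{c}{\sqrt t}e^{-\alpha x^2/(4t)}$. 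For the symmetry in item~\ref{converge}, note that in \eqref{sketch-ct} the curvature $\cc$ is even and the torsion $\tau$ is odd; hence $s\mapsto(f_{1,c,\alpha}(s),-f_{2,c,\alpha}(s),-f_{3,c,\alpha}(s))$ solves the same Frenet system with the same initial data as $\f_{c,\alpha}(-s)$, so the two coincide and therefore $\A^-_{c,\alpha}=(A^+_{1,c,\alpha},-A^+_{2,c,\alpha},-A^+_{3,c,\alpha})$.

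It remains to analyse $\f_{c,\alpha}(s)$ as $s\to\pm\infty$. For $\alpha>0$ this is the easy regime: $\abs{\f'_{c,\alpha}(s)}=c\,e^{-\alpha s^2/4}\in L^1(\R)$ forces convergence to unit vectors $\A^\pm_{c,\alpha}$, giving the pointwise limits of item~\ref{converge}, and the Gaussian tail bound $\abs{\f_{c,\alpha}(s)-\A^+_{c,\alpha}}\le\int_s^\infty c\,e^{-\alpha\sigma^2/4}\,d\sigma$ yields, after the rescaling $s=x/\sqrt t$ and a change of variables, the $L^p$ rate of item~\ref{rate} (including $p=1$); likewise $\abs{\f_{c,\alpha}(s)-(1,0,0)}\le\int_0^{\abs s}c\,e^{-\alpha\sigma^2/4}\,d\sigma\le c\sqrt\pi/\sqrt\alpha$ gives item~\ref{cotas-A} on letting $c\to0^+$. \emph{The main obstacle is the limit case $\alpha=0$}: here $\cc\equiv c$ does not decay, so convergence of $\f$ cannot be obtained from integrability and must instead be extracted from the oscillation produced by the linear torsion $\tau(s)=s/2$. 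The efficient device is the Hasimoto filament function $\psi=\cc\,e^{i\int_0^s\tau}$, which linearises the frame equations and is here an explicit Gaussian-type function; a stationary-phase analysis of the resulting oscillatory integrals then delivers the limits $\A^\pm_{c,0}$, the $L^p$ rate for $p\in(1,\infty)$ (but not $p=1$), and the continuity in $c$. This oscillatory estimate, rather than the algebraic reduction, is the delicate point of the argument.
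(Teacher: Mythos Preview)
Your sketch is correct and matches the construction that the paper itself attributes to \cite{gutierrez-delaire}: the paper does not prove Theorem~\ref{thm-self} here but cites it, while explicitly recording (just before Proposition~\ref{cotas-self} and around \eqref{cur-tor}) that $\f_{c,\alpha}$ is built via the Serret--Frenet system with the initial data $\f(0)=(1,0,0)$ and that the associated curvature and torsion are $\cc(s)=c\,e^{-\alpha s^2/4}$, $\tau(s)=\tfrac{\beta}{2}s$. Your identification of the $\alpha=0$ endpoint as the genuinely delicate part---where integrability of $\abs{\f'}$ fails and one must exploit the oscillation from $\tau(s)=s/2$ via a stationary-phase/Hasimoto argument---is exactly the point emphasised in the original reference; the one small gap is that your bound $\abs{\f_{c,\alpha}(s)-(1,0,0)}\le c\sqrt\pi/\sqrt\alpha$ for item~\ref{cotas-A} is vacuous at $\alpha=0$, so that case of \ref{cotas-A} also has to come from the oscillatory analysis rather than from the crude Gaussian tail.
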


\begin{lemma}\label{lemma-theta}
Let $c> 0$, $\alpha\in (0,1]$,   $\A^{+}_{c,\alpha}, \A^{-}_{c,\alpha}$ be the unit vectors
given in Theorem~\ref{thm-self} and $\vartheta_{c,\alpha}$ the angle between  $\A^{+}_{c,\alpha}$
and  $\A^{-}_{c,\alpha}$. Then, for fixed $\alpha\in (0,1]$, $\vartheta_{c,\alpha}$ is a continuous
function in $c$. Also, for $0<c<\alpha^2\sqrt{\pi}/32$,
\bq\label{est-theta}
\vartheta_ {c,\alpha}\geq \arccos\left(1-c^2\pi+\frac{32c^3\sqrt{\pi}}{\alpha^2}\right).
\eq
\begin{remark}
If $\alpha\in (0,1]$ and $c\in(0,\alpha^2\sqrt{\pi}/32)$, then  $1-c^2\pi+\frac{32c^3\sqrt{\pi}}{\alpha^2}\in (0,1)$,
so that its $\arccos$  is well-defined.
\end{remark}

\end{lemma}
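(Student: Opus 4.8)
The plan is to convert the angle estimate into a lower bound for $(A^+_{2,c,\alpha})^2+(A^+_{3,c,\alpha})^2$ and then read off the angle. Since Theorem~\ref{thm-self} gives $\A^-_{c,\alpha}=(A^+_{1,c,\alpha},-A^+_{2,c,\alpha},-A^+_{3,c,\alpha})$ and $|\A^+_{c,\alpha}|=1$, I would first record
\[
\cos\vartheta_{c,\alpha}=\A^+_{c,\alpha}\cdot\A^-_{c,\alpha}=2(A^+_{1,c,\alpha})^2-1=1-2\big((A^+_{2,c,\alpha})^2+(A^+_{3,c,\alpha})^2\big).
\]
Because $\arccos$ is decreasing, $\vartheta_{c,\alpha}\in[0,\pi]$, and the argument of the $\arccos$ in \eqref{est-theta} lies in $(0,1)$ on the stated range of $c$ (the Remark following the statement), the bound \eqref{est-theta} is equivalent to
\[
(A^+_{2,c,\alpha})^2+(A^+_{3,c,\alpha})^2\ge\frac{c^2\pi}{2}-\frac{16c^3\sqrt{\pi}}{\alpha^2}.
\]
The continuity of $\vartheta_{c,\alpha}$ in $c$ then reduces to the continuity of $c\mapsto A^+_{1,c,\alpha}$.

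For the quantitative bound I would use the Serret--Frenet description of the profile $\f_{c,\alpha}$. By \eqref{cur-tor}, taken at $t=1$ with $s$ the arclength, the tangent $\f=\f_{c,\alpha}$, normal $\nn$ and binormal $\bb$ solve
\[
\f'=\cc\,\nn,\qquad \nn'=-\cc\,\f+\tau\,\bb,\qquad \bb'=-\tau\,\nn,
\]
with $\cc(s)=ce^{-\alpha s^2/4}$ and $\tau(s)=\beta s/2$, consistently with \eqref{derivada}. Since $f_{2}(0)=f_{3}(0)=0$, integrating the first equation gives $A^+_{j,c,\alpha}=c\int_0^\infty e^{-\alpha s^2/4}\nn_j(s)\,ds$ for $j=2,3$. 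I would then compare $(\nn,\bb)$ with the $c=0$ frame $(\nn^{(0)},\bb^{(0)})$, which solves the decoupled rotation $\nn^{(0)}{}'=\tau\bb^{(0)}$, $\bb^{(0)}{}'=-\tau\nn^{(0)}$ with the same initial data, so that $\nn^{(0)}(s)=\cos(\beta s^2/4)\,\nn(0)+\sin(\beta s^2/4)\,\bb(0)$. As $\nn(0),\bb(0)$ are orthonormal and orthogonal to $\f(0)=(1,0,0)$, the combination satisfies $\nn^{(0)}_2(s)+i\nn^{(0)}_3(s)=\lambda\,e^{\pm i\beta s^2/4}$ with $|\lambda|=1$, so the leading contribution has modulus
\[
\Big|c\int_0^\infty e^{-\alpha s^2/4}\big(\nn^{(0)}_2+i\nn^{(0)}_3\big)\,ds\Big|=c\Big|\int_0^\infty e^{(-\alpha\pm i\beta)s^2/4}\,ds\Big|=c\sqrt{\pi},
\]
the last equality because $|{-\alpha\pm i\beta}|=1$.

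The heart of the argument is to control the frame perturbation. Setting $p=\nn-\nn^{(0)}$, $q=\bb-\bb^{(0)}$ and $E=|p|^2+|q|^2$, the torsion terms cancel in the energy identity and one is left with $E'=-2\cc\,(p\cdot\f)$, whence $\tfrac{d}{ds}\sqrt{E}\le \cc=ce^{-\alpha s^2/4}$, and since $E(0)=0$,
\[
|p(s)|\le\sqrt{E(s)}\le c\int_0^\infty e^{-\alpha\sigma^2/4}\,d\sigma=\frac{c\sqrt\pi}{\sqrt\alpha}.
\]
Consequently the error between $A^+_{2,c,\alpha}+iA^+_{3,c,\alpha}$ and its leading term is at most $c\int_0^\infty e^{-\alpha s^2/4}|p(s)|\,ds\le c^2\pi/\alpha$, and the reverse triangle inequality gives
\[
\big((A^+_{2,c,\alpha})^2+(A^+_{3,c,\alpha})^2\big)^{1/2}\ge c\sqrt\pi-\frac{c^2\pi}{\alpha}>0
\]
on the range $c<\alpha^2\sqrt\pi/32$. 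Squaring and discarding the positive quartic term yields $(A^+_{2,c,\alpha})^2+(A^+_{3,c,\alpha})^2\ge c^2\pi-2c^3\pi^{3/2}/\alpha$, and a direct comparison (using $\alpha\le1$, so that $16/\alpha^2\ge 2\pi/\alpha$) shows this is at least $\tfrac{c^2\pi}{2}-16c^3\sqrt\pi/\alpha^2$, which is exactly the required inequality.

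The main obstacle is precisely this error control: one must extract the correct power $c^3$ (not $c^2$) in the final inequality, which is why the cancellation of the torsion contributions in the energy identity for $E$ is essential, as it keeps the perturbation of the frame of size $O(c)$ uniformly in $s$ and produces a clean Gaussian tail; the generous constant $16c^3\sqrt\pi/\alpha^2$ in the statement then absorbs the crude bounds. Finally, for the continuity of $\vartheta_{c,\alpha}$, I would invoke continuous dependence of the Serret--Frenet system on the parameter $c$ (locally uniformly in $s$) together with the uniform domination $|ce^{-\alpha s^2/4}\nn_1(s)|\le ce^{-\alpha s^2/4}$, so that dominated convergence makes $c\mapsto A^+_{1,c,\alpha}=1+c\int_0^\infty e^{-\alpha s^2/4}\nn_1(s)\,ds$ continuous, and hence $\vartheta_{c,\alpha}=\arccos\big(2(A^+_{1,c,\alpha})^2-1\big)$ continuous as well.
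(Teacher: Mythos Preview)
Your argument is correct and arrives at the same bound, but by a genuinely different route from the paper's proof. The paper does not re-derive the asymptotics of $\A^+_{c,\alpha}$; it simply quotes the explicit expansion of $A^+_{2,c,\alpha}$ (Theorem~1.3 in \cite{gutierrez-delaire}), namely
\[
\Big|A^+_{2,c,\alpha}-c\tfrac{\sqrt{\pi(1+\alpha)}}{\sqrt2}\Big|\le \tfrac{c^2\pi}{4}
  +\tfrac{c^2\pi}{\alpha\sqrt{2}}\big(1+\tfrac{c^2 \pi}{8}+c\tfrac{\sqrt{\pi(1+\alpha)}}{2\sqrt2}\big)
  +\big(\tfrac{c^2\pi}{2\sqrt{2}\alpha}\big)^2,
\]
bounds the right-hand side crudely by $8c^2/\alpha^2$, and then squares the lower bound $A^+_{2,c,\alpha}\ge c\sqrt{\pi}/\sqrt2-8c^2/\alpha^2$ to get $(A^+_{2,c,\alpha})^2\ge \tfrac{c^2\pi}{2}-\tfrac{16c^3\sqrt\pi}{\alpha^2}$; the term $(A^+_{3,c,\alpha})^2$ is then simply dropped. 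The continuity is likewise just cited from \cite{gutierrez-delaire}.

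Your approach is self-contained: you compare the full Serret--Frenet frame to the $c=0$ rotation, exploit the cancellation of the torsion in the energy $E=|p|^2+|q|^2$ to get the uniform $O(c)$ bound on the frame perturbation, and obtain a lower bound on the combined quantity $\big((A^+_{2,c,\alpha})^2+(A^+_{3,c,\alpha})^2\big)^{1/2}\ge c\sqrt\pi-c^2\pi/\alpha$. This is in fact sharper (leading term $c^2\pi$ rather than $c^2\pi/2$) before you relax it to match the stated constants. The trade-off is that the paper's proof is two lines given the cited estimate, whereas yours reproves from scratch what is essentially the content of that estimate; on the other hand, your argument makes transparent \emph{why} the error is $O(c^2)$ (the torsion cancellation), which the black-box citation does not.
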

\begin{proof}
The continuity was proved in  \cite{gutierrez-delaire}. To show the estimate \eqref{est-theta}, 
 we use Theorem~1.3 in \cite{gutierrez-delaire}, to get
$$\left|A_{2, c,\alpha}^+-c\frac{\sqrt{\pi(1+\alpha)}}{\sqrt2}\right|\leq \frac{c^2\pi}{4}
  +\frac{c^2\pi}{\alpha\sqrt{2}}\left(1+\frac{c^2 \pi}{8}+c\frac{\sqrt{\pi(1+\alpha)}}{2\sqrt2}\right)
  +\left(\frac{c^2\pi}{2\sqrt{2}\alpha}\right)^2.$$
Since $\alpha\in(0,1]$, we have for any $c\in (0,1]$,
  $$\frac{\pi}{4}
  +\frac{\pi}{\alpha\sqrt{2}}\left(1+\frac{c^2 \pi}{8}+c\frac{\sqrt{\pi(1+\alpha)}}{2\sqrt2}\right)
  +\frac{c^2\pi^2}{8\alpha^2}
  \leq
\frac{1}{\alpha^2}\left(
\frac{\pi}{4}+\frac{\pi}{\sqrt 2}\left(1+\frac{\pi}{8}+\frac{\sqrt{\pi}}{2}\right)+\frac{\pi^2}{8}
\right)
\leq \frac{8}{\alpha^2}.$$
We deduce that for all $\alpha,c\in(0,1]$,
\bqq
A_{2, c,\alpha}^+\geq c\frac{\sqrt{\pi(1+\alpha)}}{\sqrt2}-\frac{8c^2}{\alpha^2}\geq c\frac{\sqrt{\pi}}{\sqrt2}-\frac{8c^2}{\alpha^2}.
\eqq
In particular $A_{2, c,\alpha}^+\geq 0$ if $c\leq \alpha^2\sqrt{\pi}/(8\sqrt{2})$. Thus
\bqq
(A_{2, c,\alpha}^+)^2\geq \frac{c^2\pi}2-\frac{16c^3\sqrt{\pi}}{\sqrt{2}\alpha^2}+\frac{64c^4}{\alpha^4}\geq \frac{c^2\pi}2-\frac{16c^3\sqrt{\pi}}{\alpha^2},
\eqq
so that
\bqq
\cos(\vartheta_{c,\alpha})=1-2((A_{2, c,\alpha}^+)^2+(A_{3, c,\alpha}^+)^2)\leq
1-c^2\pi+\frac{32c^3\sqrt{\pi}}{\alpha^2},
\eqq
which implies \eqref{est-theta}.
\end{proof}

The following lemma is a slightly refinement of Theorem~1.4 in \cite{gutierrez-delaire}.
\begin{lemma}[\cite{gutierrez-delaire}]\label{lemma-A}
Let $c> 0$, $\alpha\in[0,1]$ and  $\A^{+}_{c,\alpha}$ be the unit vector
given in Theorem~\ref{thm-self}. Then $\A^+_{c,\alpha}$ is a continuous function of $\alpha$ in $[0,1]$
and
\bq    \label{thm-alpha2}
 \abs{\A^+_{c,\alpha}-\A^+_{c,1}} \leq h(c)\, \sqrt{1-\alpha}, \quad \text{for all }\ \alpha\in [1/2,1],
   \eq
where $h:\R^+\to\R^+$ is a strictly increasing function satisfying
$$\lim_{s\to\infty}h(s)=\infty.$$
\end{lemma}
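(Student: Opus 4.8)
The plan is to exploit the construction of the profile $\f_{c,\alpha}$ through the Serret--Frenet system, to read off $\A^+_{c,\alpha}$ as the endpoint value at $+\infty$ of the (integrable) derivative $\f'_{c,\alpha}$, and then to compare the moving frames associated with the parameters $\alpha$ and $1$ by a Gronwall argument. Recall that, writing $s=x$ at $t=1$, the profile $\f_{c,\alpha}$ is the unit tangent of a curve whose curvature and torsion are
\bqq
\cc_{c,\alpha}(s)=c\,e^{-\alpha s^2/4},\qquad \tau_{c,\alpha}(s)=\frac{\beta s}{2}, \qquad \beta=\sqrt{1-\alpha^2},
\eqq
and $(\f_{c,\alpha},\n_{c,\alpha},\bb_{c,\alpha})$ solves the Frenet system with an $\alpha$-independent initial frame at $s=0$ (with $\f_{c,\alpha}(0)=(1,0,0)$). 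Since $\abs{\f'_{c,\alpha}(s)}=\cc_{c,\alpha}(s)\in L^1(\R^+)$, the limit exists and
\bqq
\A^+_{c,\alpha}=(1,0,0)+\int_0^\infty \cc_{c,\alpha}(s)\,\n_{c,\alpha}(s)\,ds.
\eqq

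First I would collect the frame into a matrix $F_{c,\alpha}(s)\in SO(3)$ solving the linear ODE $F'=F\,\Omega_\alpha$, with $\Omega_\alpha(s)$ the antisymmetric matrix built from $\cc_{c,\alpha}$ and $\tau_{c,\alpha}$, and compare $F_{c,\alpha}$ with $F_{c,1}$, which has the same initial value. Setting $D=F_{c,\alpha}-F_{c,1}$, one has $D'=D\,\Omega_\alpha+F_{c,1}(\Omega_\alpha-\Omega_1)$ with $D(0)=0$; since $\Omega_\alpha$ is antisymmetric, the homogeneous term does not contribute to $\tfrac{d}{ds}\abs{D}^2$ (because $\operatorname{tr}(D^TD\,\Omega_\alpha)=0$ for $D^TD$ symmetric and $\Omega_\alpha$ antisymmetric), so
\bqq
\abs{F_{c,\alpha}(s)-F_{c,1}(s)}\leq C\int_0^s\big(\abs{\cc_{c,\alpha}-\cc_{c,1}}+\abs{\tau_{c,\alpha}}\big)\,d\sigma.
\eqq
Using $1-e^{-x}\le x$ gives $\abs{\cc_{c,\alpha}(\sigma)-\cc_{c,1}(\sigma)}\le \tfrac{c}{4}(1-\alpha)\sigma^2 e^{-\alpha\sigma^2/4}$, whose integral over $\R^+$ is $\lesssim c(1-\alpha)$ for $\alpha\ge 1/2$ (as $\int_0^\infty\sigma^2 e^{-\alpha\sigma^2/4}d\sigma=2\sqrt\pi\,\alpha^{-3/2}$); while $\abs{\tau_{c,\alpha}(\sigma)}=\tfrac{\sqrt{1-\alpha^2}}2\sigma\le \tfrac{\sqrt{1-\alpha}}{\sqrt2}\sigma$, whose integral is only $\lesssim \sqrt{1-\alpha}\,s^2$. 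Hence $\abs{F_{c,\alpha}(s)-F_{c,1}(s)}\le C\big(c(1-\alpha)+\sqrt{1-\alpha}\,s^2\big)$.

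The main obstacle is precisely that the torsion grows linearly and is not integrable, so this frame bound degenerates as $s\to\infty$ and a naive global Gronwall estimate fails. The resolution is to split
\bqq
\A^+_{c,\alpha}-\A^+_{c,1}=\int_0^\infty(\cc_{c,\alpha}-\cc_{c,1})\,\n_{c,\alpha}\,ds+\int_0^\infty \cc_{c,1}\,(\n_{c,\alpha}-\n_{c,1})\,ds,
\eqq
and to exploit the Gaussian weight $\cc_{c,1}(s)=c\,e^{-s^2/4}$ in the second integral to absorb the polynomially growing frame difference. The first integral is bounded by $\int_0^\infty\abs{\cc_{c,\alpha}-\cc_{c,1}}\lesssim c(1-\alpha)$, and the second by $\int_0^\infty c\,e^{-s^2/4}\,C\big(c(1-\alpha)+\sqrt{1-\alpha}\,s^2\big)\,ds\lesssim (c+c^2)\sqrt{1-\alpha}$, since the Gaussian moments $\int_0^\infty e^{-s^2/4}ds$ and $\int_0^\infty s^2 e^{-s^2/4}ds$ are finite. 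Combining and using $(1-\alpha)\le\sqrt{1-\alpha}$ for $\alpha\in[1/2,1]$ yields $\abs{\A^+_{c,\alpha}-\A^+_{c,1}}\le h(c)\sqrt{1-\alpha}$ with, for instance, $h(c)=C(c+c^2)$, which is strictly increasing and diverges as $c\to\infty$, as required. Finally, the continuity of $\alpha\mapsto\A^+_{c,\alpha}$ on all of $[0,1]$ follows from the same comparison carried out between an arbitrary $\alpha_0$ and $\alpha$ (replacing $\cc_{c,1},\tau_{c,1}$ by $\cc_{c,\alpha_0},\tau_{c,\alpha_0}$): the coefficient differences tend to $0$ as $\alpha\to\alpha_0$, and the curvature weight again controls the tail, so the corresponding weighted integral vanishes in the limit.
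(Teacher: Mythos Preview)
Your argument for the main estimate \eqref{thm-alpha2} is correct and more self-contained than what the paper does. The paper simply invokes \cite[Theorem~1.4]{gutierrez-delaire} for the bound $\abs{\A^+_{c,\alpha}-\A^+_{c,1}}\leq C(c)\sqrt{1-\alpha}$ and then traces through the proof there to check that $C(c)$ is a polynomial in $c$ with nonnegative coefficients (hence strictly increasing and divergent). Your approach, by contrast, rebuilds the comparison from the Frenet system: you exploit the antisymmetry of $\Omega_\alpha$ to obtain a clean linear bound on the frame difference, then use the key observation that the Gaussian weight $\cc_{c,1}(s)=ce^{-s^2/4}$ absorbs the quadratic growth coming from the non-integrable torsion $\tau_{c,\alpha}$. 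This is a transparent alternative and yields an explicit $h(c)=C(c+c^2)$.

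One gap to flag: your final continuity argument does not cover $\alpha_0=0$. When $\alpha_0=0$ the curvature $\cc_{c,0}(s)=c$ is constant, so there is no Gaussian weight to ``control the tail,'' and the integral representation $\A^+_{c,0}=(1,0,0)+\int_0^\infty c\,\n_{c,0}(s)\,ds$ is not absolutely convergent. The existence of $\A^+_{c,0}$ and the continuity at $\alpha_0=0$ rely instead on the oscillation produced by the linearly growing torsion (an oscillatory integral or stationary phase argument, carried out in \cite{gutierrez-delaire}). For $\alpha_0\in(0,1]$ your argument is fine; at the endpoint $\alpha_0=0$ you should either cite \cite{gutierrez-delaire} directly or supply the oscillatory estimate.
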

\begin{proof}
  In view of  \cite[Theorem~1.4]{gutierrez-delaire}, we only need to prove that the constant $C(c)$
 in the statement of the Theorem~1.4 (notice that $c_0$ in \cite{gutierrez-delaire} corresponds to $c$ in our notation)
 is  polynomial in $c$ with nonnegative coefficients.
 Looking at the proof of  \cite[Theorem~1.4]{gutierrez-delaire}, we see that the constant $C(c)$
behaves like the constant in inequality (3.108) in \cite{gutierrez-delaire}.
In view of (3.17),  the estimate (3.23) in \cite{gutierrez-delaire} can be written as
$$\abs{f(s)}\leq \sqrt{2} \quad \text{and}\quad \abs{f'(s)}\leq \frac{c}{2}e^{-\alpha s^2/4},$$
and then (3.18) can be recast as
$$\abs{g}\leq \left(\frac{c}4+\frac{c^2\sqrt{2}}8
\right)\left(\frac{s}{\beta}e^{-\alpha
s^2/4}+s^2e^{-\alpha s^2/2}\right).$$
Then, it can be easily checked that the function $h$ is a polynomial with nonnegative coefficients.
\end{proof}

\begin{thank}
A.~de~Laire was partially  supported  by the Labex CEMPI (ANR-11-LABX-0007-01) and  the MathAmSud program. S.~Gutierrez was partially supported by the EPSRC grant EP/J01155X/1 and the ERCEA Advanced Grant 2014 669689 - HADE.
\end{thank}

 \bibliography{ref}

\begin{thebibliography}{10}

\bibitem{abram}
M.~Abramowitz and I.~A. Stegun.
\newblock {\em Handbook of mathematical functions with formulas, graphs, and
  mathematical tables}, volume~55 of {\em National Bureau of Standards Applied
  Mathematics Series}.
\newblock For sale by the Superintendent of Documents, U.S. Government Printing
  Office, Washington, D.C., 1964.

\bibitem{alouges-soyeur}
F.~Alouges and A.~Soyeur.
\newblock On global weak solutions for {L}andau-{L}ifshitz equations: existence
  and nonuniqueness.
\newblock {\em Nonlinear Anal.}, 18(11):1071--1084, 1992.

\bibitem{arrieta}
J.~M. Arrieta, A.~Rodriguez-Bernal, J.~W. Cholewa, and T.~Dlotko.
\newblock Linear parabolic equations in locally uniform spaces.
\newblock {\em Math. Models Methods Appl. Sci.}, 14(2):253--293, 2004.

\bibitem{banica-vega1}
V.~Banica and L.~Vega.
\newblock On the {D}irac delta as initial condition for nonlinear
  {S}chr\"odinger equations.
\newblock {\em Ann. Inst. H. Poincar\'e Anal. Non Lin\'eaire}, 25(4):697--711,
  2008.

\bibitem{banica-vega}
V.~Banica and L.~Vega.
\newblock On the stability of a singular vortex dynamics.
\newblock {\em Comm. Math. Phys.}, 286(2):593--627, 2009.

\bibitem{Banica-Vega-2}
V.~Banica and L.~Vega.
\newblock Scattering for 1{D} cubic {NLS} and singular vortex dynamics.
\newblock {\em J. Eur. Math. Soc. (JEMS)}, 14(1):209--253, 2012.

\bibitem{banica-vega-Arch}
V.~Banica and L.~Vega.
\newblock Stability of the self-similar dynamics of a vortex filament.
\newblock {\em Arch. Ration. Mech. Anal.}, 210(3):673--712, 2013.

\bibitem{biernat-bizon}
P.~Biernat and P.~Bizo{\'n}.
\newblock Shrinkers, expanders, and the unique continuation beyond generic
  blowup in the heat flow for harmonic maps between spheres.
\newblock {\em Nonlinearity}, 24(8):2211--2228, 2011.

\bibitem{brezis-friedman}
H.~Brezis and A.~Friedman.
\newblock Nonlinear parabolic equations involving measures as initial
  conditions.
\newblock {\em J. Math. Pures Appl. (9)}, 62(1):73--97, 1983.

\bibitem{brezis-nir}
H.~Brezis and L.~Nirenberg.
\newblock Degree theory and {BMO}. {I}. {C}ompact manifolds without boundaries.
\newblock {\em Selecta Math. (N.S.)}, 1(2):197--263, 1995.

\bibitem{coron}
J.-M. Coron.
\newblock Nonuniqueness for the heat flow of harmonic maps.
\newblock {\em Ann. Inst. H. Poincar\'e Anal. Non Lin\'eaire}, 7(4):335--344,
  1990.

\bibitem{daniel-lak}
M.~Daniel and M.~Lakshmanan.
\newblock Perturbation of solitons in the classical continuum isotropic
  {H}eisenberg spin system.
\newblock {\em Physica A: Statistical Mechanics and its Applications},
  120(1):125--152, 1983.

\bibitem{ding-wang}
S.~Ding and C.~Wang.
\newblock Finite time singularity of the {L}andau-{L}ifshitz-{G}ilbert
  equation.
\newblock {\em Int. Math. Res. Not. IMRN}, (4):Art. ID rnm012, 25, 2007.

\bibitem{germain-pav}
P.~Germain, N.~Pavlovi{\'c}, and G.~Staffilani.
\newblock Regularity of solutions to the {N}avier-{S}tokes equations evolving
  from small data in {${\rm BMO}^{-1}$}.
\newblock {\em Int. Math. Res. Not. IMRN}, (21):Art. ID rnm087, 35, 2007.

\bibitem{germain-rupflin}
P.~Germain and M.~Rupflin.
\newblock Selfsimilar expanders of the harmonic map flow.
\newblock {\em Ann. Inst. H. Poincar\'e Anal. Non Lin\'eaire}, 28(5):743--773,
  2011.

\bibitem{gilbert}
T.~L. Gilbert.
\newblock A lagrangian formulation of the gyromagnetic equation of the
  magnetization field.
\newblock {\em Phys. Rev.}, 100:1243, 1955.

\bibitem{gutierrez-delaire}
S.~Guti\'errez and A.~de~Laire.
\newblock Self-similar solutions of the one-dimensional
  {L}andau--{L}ifshitz--{G}ilbert equation.
\newblock {\em Nonlinearity}, 28(5):1307, 2015.

\bibitem{vega-gutierrez}
S.~Guti{\'e}rrez, J.~Rivas, and L.~Vega.
\newblock Formation of singularities and self-similar vortex motion under the
  localized induction approximation.
\newblock {\em Comm. Partial Differential Equations}, 28(5-6):927--968, 2003.

\bibitem{hasimoto}
H.~Hasimoto.
\newblock A soliton on a vortex filament.
\newblock {\em J. Fluid Mech}, 51(3):477--485, 1972.

\bibitem{Horm97}
L.~H{\"o}rmander.
\newblock {\em Lectures on nonlinear hyperbolic differential equations},
  volume~26 of {\em Math\'ematiques \& Applications (Berlin) [Mathematics \&
  Applications]}.
\newblock Springer-Verlag, Berlin, 1997.

\bibitem{kenig-ponce-vega}
C.~E. Kenig, G.~Ponce, and L.~Vega.
\newblock On the ill-posedness of some canonical dispersive equations.
\newblock {\em Duke Math. J.}, 106(3):617--633, 2001.

\bibitem{koch-lamm}
H.~Koch and T.~Lamm.
\newblock Geometric flows with rough initial data.
\newblock {\em Asian J. Math.}, 16(2):209--235, 2012.

\bibitem{koch-tataru}
H.~Koch and D.~Tataru.
\newblock Well-posedness for the {N}avier-{S}tokes equations.
\newblock {\em Adv. Math.}, 157(1):22--35, 2001.

\bibitem{lady-parabolic}
O.~Ladyzhenskaya, V.~Solonnikov, and N.~Ural'tseva.
\newblock {\em Linear and quasi-linear equations of parabolic type}.
\newblock Amer. Math. Soc., Transl. Math. Monographs. Providence, R.I., 1968.

\bibitem{lak-nak}
M.~Lakshmanan and K.~Nakamura.
\newblock Landau-{L}ifshitz equation of ferromagnetism: Exact treatment of the
  {G}ilbert damping.
\newblock {\em Phys. Rev. Lett.}, 53:2497--2499, 1984.

\bibitem{landaulifshitz}
L.~Landau and E.~Lifshitz.
\newblock On the theory of the dispersion of magnetic permeability in
  ferromagnetic bodies.
\newblock {\em Phys. Z. Sowjetunion}, 8:153--169, 1935.

\bibitem{lemarie}
P.~G. Lemari{\'e}-Rieusset.
\newblock {\em Recent developments in the {N}avier-{S}tokes problem}, volume
  431 of {\em Chapman \& Hall/CRC Research Notes in Mathematics}.
\newblock Chapman \& Hall/CRC, Boca Raton, FL, 2002.

\bibitem{lieberman}
G.~M. Lieberman.
\newblock {\em Second order parabolic differential equations}.
\newblock World Scientific Publishing Co., Inc., River Edge, NJ, 1996.

\bibitem{lin}
J.~Lin.
\newblock Uniqueness of harmonic map heat flows and liquid crystal flows.
\newblock {\em Discrete Contin. Dyn. Syst.}, 33(2):739--755, 2013.

\bibitem{lin-lai-wang}
J.~Lin, B.~Lai, and C.~Wang.
\newblock Global well-posedness of the {L}andau-{L}ifshitz-{G}ilbert equation
  for initial data in {M}orrey spaces.
\newblock {\em Calc. Var. Partial Differential Equations}, 54(1):665--692,
  2015.

\bibitem{melcher}
C.~Melcher.
\newblock Global solvability of the {C}auchy problem for the
  {L}andau-{L}ifshitz-{G}ilbert equation in higher dimensions.
\newblock {\em Indiana Univ. Math. J.}, 61(3):1175--1200, 2012.

\bibitem{miura}
H.~Miura and O.~Sawada.
\newblock On the regularizing rate estimates of {K}och-{T}ataru's solution to
  the {N}avier-{S}tokes equations.
\newblock {\em Asymptot. Anal.}, 49(1-2):1--15, 2006.

\bibitem{quittner}
P.~Quittner and P.~Souplet.
\newblock {\em Superlinear parabolic problems}.
\newblock Birkh\"auser Advanced Texts: Basler Lehrb\"ucher. [Birkh\"auser
  Advanced Texts: Basel Textbooks]. Birkh\"auser Verlag, Basel, 2007.
\newblock Blow-up, global existence and steady states.

\bibitem{stein}
E.~M. Stein.
\newblock {\em Harmonic analysis: real-variable methods, orthogonality, and
  oscillatory integrals}, volume~43 of {\em Princeton Mathematical Series}.
\newblock Princeton University Press, Princeton, NJ, 1993.
\newblock With the assistance of Timothy S. Murphy, Monographs in Harmonic
  Analysis, III.

\bibitem{wang}
C.~Wang.
\newblock Well-posedness for the heat flow of harmonic maps and the liquid
  crystal flow with rough initial data.
\newblock {\em Arch. Ration. Mech. Anal.}, 200(1):1--19, 2011.

\bibitem{wei2012}
D.~Wei.
\newblock {\em Micromagnetics and Recording Materials}.
\newblock SpringerBriefs in Applied Sciences and Technology. Springer Berlin
  Heidelberg, 2012.

\end{thebibliography}
 \bibliographystyle{abbrv}

\end{document}